\DeclareMathOperator*{\argmin}{arg\,min}
\newtheorem{prop}{Proposition}
\newtheorem{thm}{Theorem}
\newtheorem{lemma}{Lemma}
\newtheorem{example}{Example}
\numberwithin{equation}{section}
\def\N {\mathbb{N}}
\def\R {\mathbb{R}}
\def\S {\mathbb{S}}
\def\L {\mathbb{L}}
\def\E {\mathbb{E}}
\def\D {\CMcal{D}}
\newcommand{\mres}{\mathbin{\vrule height 1.6ex depth 0pt width
0.13ex\vrule height 0.13ex depth 0pt width 1.3ex}}
\newcommand{\dd}{\,\mathrm{d}}
\newcommand\blfootnote[1]{%
  \begingroup
  \renewcommand\thefootnote{}\footnote{#1}%
  \addtocounter{footnote}{-1}%
  \endgroup
  }
\title{Analysis of Kinetic Models for Label Switching and Stochastic Gradient Descent}
\author{Martin Burger$^1$, Alex Rossi$^2$}
\date{$^{1, 2}$ Department of Mathematics, Friedrich--Alexander--Universität Erlangen--Nürnberg  \\[2ex] \today}
\begin{document}
\maketitle

\begin{abstract}
In this paper we provide a novel approach to the analysis of kinetic models for label switching, which are used for particle systems that can randomly switch between gradient flows in different energy landscapes. Besides problems in biology and physics, we also demonstrate that stochastic gradient descent, the most popular technique in machine learning, can be understood in this setting, when considering a time-continuous variant. 

Our analysis is focusing on the case of evolution in a collection of external potentials, for which we provide analytical and numerical results about the evolution as well as the stationary problem. 
    
\end{abstract}

\section{Introduction}
In this paper we introduce and analyze a kinetic equation that generalizes models for different natural phenomena:
\begin{equation}\label{fund_eq_meas0}
    \frac{\partial \rho}{\partial t} = \nabla \cdot(\rho \nabla E_s[\rho]) + K  (\bar{\rho}\otimes \mu - \rho ) 
\end{equation}
where $E_s$ is a collection of energy functionals labeled by a variable $s$, $\bar{\rho}$ is the marginal of $\rho$ on $\R^d$ integrating out $s$, $K > 0$ and $\mu$ a given probability measure on $S$. This problem models a process of label switching, where $K>0$ is the frequency of switching and $\mu$ describes the probability to switch to either variable. Between the switching events the particles follow a gradient flow according to the energy labelled with $s$. We will provide several examples of such processes from different fields below.

In our first analysis carried out in this paper, we are focusing on the case of $E_s$ being potential energies (with potentials $f(\cdot,s)$). Thus we arrive at the following equation for an evolving probability measure $\rho$ on $\R^d \times S$ with $S \subseteq \R^m$ a bounded label space, interpreted in weak sense:
\begin{equation}\label{fund_eq_meas}\frac{\partial \rho}{\partial t} = \nabla \cdot(\rho \nabla f(x, s)) + K  (\bar{\rho}\otimes \mu - \rho ).\end{equation}
 We specify that gradients and divergences are carried out just with respect to the variable $x$. We set the initial datum $\rho_0$ of \eqref{fund_eq_meas} as a probability measure on $\R^d \times S$, therefore the mass is preserved:
$$ \frac \dd{\dd t}\int_{\R^d \times S} \dd \rho_t (x, s) = 0.$$
Let us mention that, in the context of stochastic gradient descent, we are also interested in the case of $K$ being a time-dependent function with $K(t) \rightarrow \infty$ as $t \rightarrow \infty$, which corresponds to an asymptotically learning rate (inversely proportional to $K$).
\blfootnote{2020 Mathematics Subject Classification. Primary: 35A01, 35A02, 35R09, 45K05, Secondary: 68T07}
\blfootnote{Keywords and Phrases: Integro--partial differential equations, Label switching, Run--and--tumble particles, Stochastic gradient descent, Machine learning}

A particularly interesting case is when the measure $\rho$ is absolutely continuous with respect to $\L_d \otimes \mu$, where $\L_d$ denotes the Lebesgue measure on $\R^d$. As we shall see below, this is true if the initial value $\rho_0$ is absolutely continuous with respect to $\L_d \otimes \mu$. 
The corresponding density $\nu = \frac{d\rho}{d(\L_d \otimes \mu)}$ then satisfies the equation
\begin{equation}\label{fund_eq} \frac{\partial \nu}{\partial t} = \nabla \cdot(\nu \nabla f(x, s)) + K\int_S(\nu(x, s') - \nu(x, s))\dd \mu(s'),\end{equation}
with $\nu = \nu(t, x, s),\, t \geq 0,\, x \in \R^d,\, s \in S$.

Our main contributions in this paper are as follows:
\begin{itemize}
    \item We provide a detailed analysis of the transient and stationary problem for \eqref{fund_eq_meas}. In particular we establish the existence of stationary solutions under rather general conditions and discuss their structure depending on the properties of the potentials $f$ and their variation with $s$.
    
    \item We establish a novel approach to derive stability estimates in Wasserstein metrics via a coupling technique for the Kantorovich formulation, similar to the methodology in \cite{FP19}.
    
    \item We analyze the convergence of equation \eqref{fund_eq_meas} 
    
    \item Motivated by stochastic gradient descent, where $\frac{1}K$ is the learning rate typically decreasing to zero, we study the case of a time dependent coefficient $K$ with $K(t) \rightarrow \infty$ as $t\rightarrow \infty$ and show convergence 
\end{itemize}

As mentioned above,  \eqref{fund_eq_meas0} can be applied in many fields, hence we provide some further examples:
\begin{enumerate}
    \item Run--and--tumble processes, a general class of processes (see \cite{SCJ15} and \cite{SH19}), where the movement along straight lines is punctuated by random resets of the direction $\theta$. The main example is the locomotion of bacteria, whose density $\rho$ is described by an equation of type \eqref{fund_eq}:
    $$\partial_t \rho(r, \theta) = \nabla \cdot((v_se_z - vu(\theta))\rho(r, \theta)) - \alpha \rho(r, \theta) + \frac \alpha{2\pi}\int_{-\pi}^\pi \rho(r, \theta')\,\dd \theta'$$
    with the particular choices $S = (-\pi, \pi)$ for the label space (in this case, the set of the possible angles $\theta$), $\nabla f(x, y, z, \theta) = v_se_z - vu(\theta)$ for the force, where $vu(\theta)$ is the self--propulsion velocity, $-v_se_z = -\xi^{-1}\nabla V_{ext}$, with $V_{ext}(r) = \delta mgz$ external potential, and $\mu = \frac 1{2\pi}\L_1\,\mres (-\pi, \pi)$.
    \item Kinetic models with discrete label switching (cf. e.g. \cite{LT21}), which describe the density $\rho$ of a population, split into different categories $\CMcal{I} = \{1, \dots, N\}$ and the probability of switching at time $t$ from the label $y \in \CMcal{I}$ to the label $s \in \CMcal{I}$ is $T(t; s| y)$. In this case, the label space $\CMcal{I}$ is discrete and the problem is homogeneous in space, therefore $\rho$ is only label--dependent and satisfies the equation:
    $$\partial_t \rho(s, t) = \lambda \left(\int_\CMcal{I}T(t; s|y)\rho(y, t)\,\dd y - \rho(s, t)\right),$$
    for a certain interaction parameter $\lambda > 0$.
    \item Stochastic gradient descent processes, of which equation \eqref{fund_eq_meas} can be interpreted as a continuous--time version, since, as we shall prove, in hypothesis of convexity of the potential $f$, there is convergence of the solution of \eqref{fund_eq_meas} towards the minimizer of $f$. In order to generalize the classic discrete stochastic descent process, this approach is quite different from the well--established ones based on stochastic processes and stochastic differential equations (\cite{LAT21}, \cite{LL20}, \cite{LTW19}, \cite{WO21}, \cite{WOJ21}). In particular the continuous-time approximation we study maintains the finite speed of propagation of stochastic gradient descent and may have compactly supported stationary solutions (see the analysis below and its assumptions), while the cited diffusion approximations end up with infinite speed of propagation and hence unbounded support. 
    \item Diffusion with self-consistent forces and reaction-diffusion equations: equation \eqref{fund_eq_meas} can be interpreted as a variant with labels of some equations with self-consistent interactions such as the Poisson--Nernst--Planck equations with additional change of conformation or so-called interaction switching (cf. \cite{MD20}). 
    A possible nonlinear generalization of \eqref{fund_eq_meas} can be provided by the material theory (cf. \cite{CCC13}), and in particular by models for colloids. In such systems, we have two species (\emph{big} and \emph{small}), whose interaction is described by the   nonlinear system:
    \begin{equation*} \begin{cases}
    \partial_t \rho_b(r, t) = - \nabla \cdot J_b + k_{sb}\rho_s(r, t) - k_{bs}\rho_b(r,t) \\
    \partial_t \rho_s(r, t) = - \nabla \cdot J_s + k_{bs}\rho_b(r, t) - k_{sb}\rho_s(r,t)
    \end{cases} \end{equation*}
    where $J_i \coloneqq -D_i[\nabla \rho_i + \rho_i\nabla\beta(u_i^{ext} + \mu_i^{ex})]$, $i = b, s$, are diffusive fluxes, $D_i \coloneqq k_BT/(3\pi\eta \sigma_i)$ are the diffusion constants of the two species, $u^{ext}_i$ are the confining external potentials, $\mu^{ex}_i(r,t)  \coloneqq \delta F_{ex}(\rho_i(r,t))/\delta \rho_i(r,t)$  are  the  functional  derivatives of  the  equilibrium  excess  free  energy  functional and $k_{bs}, k_{sb}$ are the switching rates between the two species. In this case, the term in the divergence is much more complicated and outside the scope of this article, whereas $S$ is composed just by two labels and $\mu$ is just a Bernoulli probability measure, rescaled by an adequate $K$.
\end{enumerate}
The structure of the paper is the following one: in section \ref{ex_un} we analyze the well--posedness of the evolution equation \eqref{fund_eq_meas}, in section \ref{scms} we introduce the coupling method, exploiting it to study the continuous dependency of solution to \eqref{fund_eq_meas} on $K$, $\mu$ and $\rho_0$ and long--time behaviour of solutions with strongly convex potentials. In section \ref{exis_ss} we deepen the existence of stationary states of equation \eqref{fund_eq_meas} and the shape of these states. In section \ref{conv_an}, we analyze the asymptotic behaviour of solutions when $K \to \infty$, also in the case $K = K(t)$ time--dependent. In section \ref{furt_per} there are some ideas about further developments for the theory on equations of type \eqref{fund_eq_meas}.
\section{Well--posedness of the Transient Problem} \label{ex_un}
In this section, we analyze the existence and uniqueness of solutions of equations \eqref{fund_eq_meas} and \eqref{fund_eq}, respectively. 
Throughout this section we will use the following important assumptions without further notice:
\begin{itemize}
\item $f \in C^{1,1}(\R^d \times S) \cap C^0(\R^d \times \bar S)$, where $C^{1,1}$ is the set of $C^1$ real functions with respect to the variable $x$.
\item $\nabla f \in Lip(\R^d \times S)$, i.e. there exists $L > 0$ such that $|\nabla f(x, s) - \nabla f(y, t)| \leq L|(x, s) - (y, t)|$, for every $x, y \in \R^d, s, t \in S$.
\end{itemize}
In particular, the last hypothesis and Rademacher's theorem (cf. theorem 11.49 in \cite{LE09}) imply that $\Delta f$ exists, but just $\L_d \otimes \mu$-a.e. and $\Delta f \in L^\infty(\R^d \times S)$. \\
In order to prove a theorem of existence and uniqueness for the solution of equation \eqref{fund_eq_meas}, we introduce the space $C^1([0,T], \CMcal{M}(\R^d \times S))$, $T > 0$, where $\CMcal{M}(\R^d \times S)$ are the Radon measures on $\R^d \times S$ with finite total variation, endowed with the norm:
\begin{align*}
\| \mu \|_{C^1([0,T], \CMcal{M}(\R^d \times S))} &\coloneqq \sup_{t \in [0, T]}\left(\|\mu(t)\|_{TV} + \|\mu'(t)\|_{TV}\right) = \\ &= \sup_{t \in [0, T]}\left(\sup_{\substack{\phi \in C_b(\R^d \times S) \\ \|\phi\|_{L^\infty} \leq 1}} \int_{\R^d \times S}\phi \,\dd \mu_t(x, s) + \sup_{\substack{\phi \in C_b(\R^d \times S) \\ \|\phi\|_{L^\infty} \leq 1}} \int_{\R^d \times S}\phi \,\dd \mu'_t(x, s)\right),
\end{align*}
which defines a Banach space (with $\mu'$ short-hand notation for $\frac{\partial \mu}{\partial t}$). We are also interested in the metric space $C^{0,1}([0, T], (\CMcal M^+(\R^d \times S), \|\cdot\|_{BL^*}))$ of Lipschitz continuous functions on $[0, T]$ with values in the positive Radon measures cone $\CMcal{M}^+(\R^d \times S)$, endowed with the norm:
\begin{align*}
\| \mu \|&_{C^{0,1}([0, T], (\CMcal M^+(\R^d \times S), \|\cdot\|_{BL^*}))} \coloneqq \\ &\sup_{t \in [0, T]}\sup_{\|\phi\|_{1, \infty} \leq 1} \int_{\R^d \times S}\phi \dd\mu_t(x, s) + \sup_{\substack{t, r \in [0, T]\\ t \neq r}}\frac{\sup_{\|\phi\|_{1, \infty} \leq 1} \int_{\R^d \times S}\phi \dd(\mu_t - \mu_r)}{|t - r|}
\end{align*}
with $\|\phi\|_{1, \infty} \coloneqq \|\phi\|_{L^\infty} + |\phi|_{Lip}$, where $|\cdot|_{Lip}$ is the Lipschitz constant. With the metric induced by this norm, $C^{0,1}([0, T], (\CMcal M^+(\R^d \times S), \|\cdot\|_{BL^*}))$ is a complete metric space. \\
We remind that, given generic measurable spaces $(X_1, \Sigma_1), (X_2, \Sigma_2)$, $F: X_1 \to X_2$ measurable and $\mu \in \CMcal{M}(X_1, \Sigma_1)$, the pushforward measure is $F\#\mu(A) \coloneqq \mu(F^{-1}(A))$, for every $A \in \Sigma_2$.
Before discussing the well-posedness of equation \eqref{fund_eq_meas}, we start with a short technical lemma.
\begin{lemma} \label{pushfwd}
If $G \in C^1([0, T] \times \R^d \times S,\, \R^d \times S)$ with $G = G(t; x, s)$, then the pushforward associated to $G$, $G\#: C^{0,1}([0, T], (\CMcal{M}^+(\R^d \times S), \|\cdot\|_{TV})) \to C^{0,1}([0, T], (\CMcal{M}^+(\R^d \times S), \|\cdot\|_{BL^*}))$ such that $G\#\mu(t) = G(t)\#\mu_t$, is well-defined. Moreover, if $\mu \in  C^{0,1}([0, T], (\CMcal{M}^+(\R^d \times S), \|\cdot\|_{TV}))$, $|G\#\mu|_{Lip} \leq \|G'\|_\infty\max_{t \in [0, T]}\|\mu_t\|_{TV} + |\mu|_{Lip}$.
\end{lemma}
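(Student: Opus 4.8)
The plan is to verify three facts in turn: (i) for each $t\in[0,T]$ the measure $G(t)\#\mu_t$ is a well-defined element of $\CMcal{M}^+(\R^d\times S)$; (ii) the curve $t\mapsto G(t)\#\mu_t$ is bounded in the $\|\cdot\|_{BL^*}$-norm; and (iii) it is Lipschitz in $t$ with respect to $\|\cdot\|_{BL^*}$ with Lipschitz constant at most $\|G'\|_\infty\max_{t\in[0,T]}\|\mu_t\|_{TV}+|\mu|_{Lip}$. Point (iii) simultaneously shows that $G\#\mu$ lands in $C^{0,1}([0,T],(\CMcal{M}^+(\R^d\times S),\|\cdot\|_{BL^*}))$ — completing the ``well-defined'' claim — and gives the quantitative bound. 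For (i): with $t$ fixed, $G(t;\cdot)\colon\R^d\times S\to\R^d\times S$ is continuous, hence Borel measurable, so $G(t)\#\mu_t$ is a well-defined Borel measure; it is nonnegative because $\mu_t$ is, and since $G(t;\cdot)$ takes values in $\R^d\times S$ we have $\|G(t)\#\mu_t\|_{TV}=G(t)\#\mu_t(\R^d\times S)=\mu_t\big((G(t;\cdot))^{-1}(\R^d\times S)\big)=\|\mu_t\|_{TV}<\infty$; finally a finite Borel measure on the Polish space $\R^d\times S$ (for the bounded label spaces considered here) is automatically Radon, so $G(t)\#\mu_t\in\CMcal{M}^+(\R^d\times S)$.

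\textbf{Uniform $BL^*$ bound.} On $\CMcal{M}(\R^d\times S)$ one has $\|\cdot\|_{BL^*}\le\|\cdot\|_{TV}$, since the admissible test functions for $\|\cdot\|_{BL^*}$ (those with $\|\phi\|_{L^\infty}+|\phi|_{Lip}\le1$) form a subset of the unit ball of $C_b(\R^d\times S)$; hence $\sup_{t\in[0,T]}\|G(t)\#\mu_t\|_{BL^*}\le\sup_{t\in[0,T]}\|\mu_t\|_{TV}=\max_{t\in[0,T]}\|\mu_t\|_{TV}<\infty$, the maximum being attained because $t\mapsto\|\mu_t\|_{TV}$ is continuous on the compact interval $[0,T]$.

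\textbf{Lipschitz estimate (the main step).} Fix $t,r\in[0,T]$, $t\neq r$, and $\phi$ with $\|\phi\|_{L^\infty}+|\phi|_{Lip}\le1$. By the definition of the pushforward,
\[\int_{\R^d\times S}\phi\,\dd\big(G(t)\#\mu_t-G(r)\#\mu_r\big)=\int_{\R^d\times S}\phi(G(t;x,s))\,\dd(\mu_t-\mu_r)(x,s)+\int_{\R^d\times S}\big(\phi(G(t;x,s))-\phi(G(r;x,s))\big)\,\dd\mu_r(x,s).\]
In the first integral $\|\phi\circ G(t;\cdot)\|_{L^\infty}\le\|\phi\|_{L^\infty}\le1$, so it is bounded in absolute value by $\|\mu_t-\mu_r\|_{TV}\le|\mu|_{Lip}\,|t-r|$. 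In the second, reading $\|G'\|_\infty$ as a bound on the norm of the full (space--time--label) Jacobian of $G$ — in particular $\|\partial_t G\|_\infty\le\|G'\|_\infty$ — the mean value theorem gives $|G(t;x,s)-G(r;x,s)|\le\|G'\|_\infty|t-r|$ for every $(x,s)$, so, using $|\phi|_{Lip}\le1$, this integral is bounded by $\|G'\|_\infty|t-r|\,\|\mu_r\|_{TV}\le\|G'\|_\infty\max_{\tau\in[0,T]}\|\mu_\tau\|_{TV}\,|t-r|$. Adding the two estimates, taking the supremum over admissible $\phi$, and dividing by $|t-r|$ gives $|G\#\mu|_{Lip}\le\|G'\|_\infty\max_{t\in[0,T]}\|\mu_t\|_{TV}+|\mu|_{Lip}$; together with the uniform bound this shows $G\#\mu\in C^{0,1}([0,T],(\CMcal{M}^+(\R^d\times S),\|\cdot\|_{BL^*}))$ and proves the claimed inequality.

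\textbf{Expected difficulties.} The analytic content of the main step is elementary; the points that need care are measure-theoretic and notational: that the pushforward of a finite Radon measure under a continuous map is again Radon (valid on the Polish spaces arising here), that no regularity of $G$ beyond continuity and the bound on $\partial_t G$ is actually used in the estimate, and that $\|G'\|_\infty$ must be understood as controlling the time derivative so that the mean value bound on $|G(t;x,s)-G(r;x,s)|$ is legitimate.
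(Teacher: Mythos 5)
Your proof is correct and follows essentially the same route as the paper's: the same add-and-subtract decomposition of $\int\phi\,\dd(G(t)\#\mu_t-G(r)\#\mu_r)$ into a term controlled by $|\phi|_{Lip}\,\|G'\|_\infty\,\|\mu\|_{TV}\,|t-r|$ and a term controlled by $\|\phi\|_{L^\infty}\,\|\mu_t-\mu_r\|_{TV}$, followed by the supremum over test functions. The only differences are cosmetic (which of $\mu_t,\mu_r$ carries the cross term) plus your explicit verification of pointwise well-definedness and the uniform $BL^*$ bound, which the paper leaves implicit.
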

\begin{proof}
We consider the $BL^*$ norm, defined as:
$$\|\nu\|_{BL^*} \coloneqq \sup_{\|\phi\|_{1, \infty} \leq 1}\left|\int_{\R^d \times S}\phi(x,s)\dd\nu(x, s)\right|$$
and we estimate the $BL^*$ norm of the pushforward of $\mu \in C^1([0, T], (\CMcal{M}^+(\R^d \times S), \|\cdot\|_{TV}))$ for $\phi$ such that $\|\phi\|_{1, \infty} \leq 1$ and $G(t) \coloneqq (X(t), S(t))$:
\begin{align*}
&\bigg|\int_{\R^d \times S}\phi\dd G\#\mu(t) - \int_{\R^d \times S}\phi\dd G\#\mu(r)\bigg| = \left|\int_{\R^d \times S}\phi(X(t), S(t))\dd\mu_t - \int_{\R^d \times S}\phi(X(r), S(r))\dd\mu_r\right| \\ &= \left|\int_{\R^d \times S}(\phi(X(t), S(t)) - \phi(X(r), S(r)))\dd\mu_t + \int_{\R^d \times S}\phi(X(r), S(r))\dd(\mu_t - \mu_r)\right|   \\
&\leq \|G'\|_\infty\|\mu_t\|_{TV}|t - r| + \|\mu_t - \mu_r\|_{TV} \leq \left(\|G'\|_\infty\max_{t \in [0, T]}\|\mu_t\|_{TV} + |\mu|_{Lip}\right)|t - r|
\end{align*}
and, therefore, passing to the supremum on $\phi$, $G\# \mu$ is a Lipschitz continuous function with constant less or equal than $\|G'\|_\infty\max_{t \in [0, T]}\|\mu_t\|_{TV} + |\mu|_{Lip}$. 
\end{proof}
We also introduce the space $\CMcal P(\R^d \times S) \subseteq \CMcal{M}^+(\R^d \times S)$ of probability measures on $\R^d \times S$ and more in particular $\CMcal P_2(\R^d \times S)$, the space of probability measures with finite second moment.
\begin{thm}\label{ex_thm}For every initial datum $\rho_0 \in \CMcal{P}_2(\R^d \times S)$ and every $T \geq 0$, there exists a unique distributional solution $\rho \in C^{0,1}([0, T], (\CMcal{M}^+(\R^d \times S), \|\cdot\|_{BL^*}))$ of equation \eqref{fund_eq_meas} such that $\rho_t \in \CMcal{P}_2(\R^d \times S)$ for every $t \in [0, T].$ \end{thm}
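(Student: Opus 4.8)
The plan is to rewrite \eqref{fund_eq_meas} in mild (Duhamel) form and solve it by a fixed-point argument, using Lemma \ref{pushfwd} to handle the transport part. Split the right-hand side as $A\rho + K(\bar\rho\otimes\mu - \rho)$, where $A\rho := \nabla\cdot(\rho\nabla f)$ is the continuity operator associated to the ($s$-frozen) velocity field $-\nabla f(\cdot,s)$. Since $\nabla f$ is globally Lipschitz it has at most linear growth in $x$ with an $s$-uniform constant, so the characteristic system $\dot X = -\nabla f(X,s)$, $\dot S = 0$ generates a global flow $\Phi\in C^1([0,T]\times\R^d\times S;\R^d\times S)$ which is bi-Lipschitz in the space variables with constants controlled on $[0,T]$ by the Lipschitz constant $L$ of $\nabla f$; write $S_t\nu := (\Phi_t)\#\nu$ for the induced pushforward semigroup and set $\tilde S_t := e^{-Kt}S_t$. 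Testing against $C^1$ space--time test functions shows that $\rho$ is a distributional solution of \eqref{fund_eq_meas} with datum $\rho_0$ if and only if it solves
\begin{equation*}
\rho_t \;=\; \tilde S_t\rho_0 + K\int_0^t \tilde S_{t-\tau}\,(\bar\rho_\tau\otimes\mu)\,\dd\tau \;=:\; \Gamma[\rho]_t ,
\end{equation*}
the decay factor $e^{-Kt}$ absorbing the $-K\rho$ term so that the only remaining occurrences of the unknown are linear and through its $x$-marginal.

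Next I would check that $\Gamma$ is a well-defined self-map of the closed (hence complete) subset $\CMcal X \subseteq C^{0,1}([0,T],(\CMcal M^+(\R^d\times S),\|\cdot\|_{BL^*}))$ of curves of probability measures whose second moment is bounded by a suitable constant $R = R(T,L,\rho_0)$. Mass is preserved: $\|\tilde S_t\rho_0\|_{TV}=e^{-Kt}$ and $K\int_0^t e^{-K(t-\tau)}\,\dd\tau = 1-e^{-Kt}$; positivity holds because pushforwards and $\bar\rho_\tau\otimes\mu$ are nonnegative. For the second moment, note that the switching term leaves the $x$-marginal, hence $\int|x|^2\,\dd\rho$, unchanged, so only the transport contributes; along characteristics a Grönwall estimate gives $|X_t|\le(|X_0|+MT)e^{LT}$ with $M=\sup_{s\in S}|\nabla f(0,s)|<\infty$, and the $s$-component is trivially bounded since $S$ is bounded, so the convolution structure of $\Gamma$ propagates the bound $R$. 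Lipschitz-in-time continuity of $\Gamma[\rho]$ for $\|\cdot\|_{BL^*}$ follows from Lemma \ref{pushfwd} applied to $\Phi$, from the Lipschitz dependence of $t\mapsto\tilde S_t\nu$ (where the first-moment bound controls $\int|\Phi_t(y)-\Phi_r(y)|\,\dd\nu$ via the linear growth of $\nabla f$), and from Leibniz differentiation of the Duhamel integral; here $t\mapsto\bar\rho_t\otimes\mu$ inherits the $BL^*$-Lipschitz regularity of $t\mapsto\rho_t$ because both marginalization $\nu\mapsto\bar\nu$ and tensoring $\nu\mapsto\nu\otimes\mu$ are $1$-Lipschitz for $\|\cdot\|_{BL^*}$ (a bounded Lipschitz function of $x$ lifts to one of $(x,s)$ with the same $\|\cdot\|_{1,\infty}$, and $x\mapsto\int\phi(x,s)\,\dd\mu(s)$ has $\|\cdot\|_{1,\infty}$ no larger than that of $\phi$).

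The contraction then comes essentially for free from the fact that $\Gamma$ is affine in $\rho$: $\Gamma[\rho]_t-\Gamma[\sigma]_t = K\int_0^t \tilde S_{t-\tau}\big((\bar\rho_\tau-\bar\sigma_\tau)\otimes\mu\big)\,\dd\tau$, and by Lemma \ref{pushfwd} together with the two $1$-Lipschitz remarks, $\|\tilde S_{t-\tau}(\nu\otimes\mu)\|_{BL^*}\le e^{-K(t-\tau)}\mathrm{Lip}(\Phi_{t-\tau})\|\nu\|_{BL^*}$, so with $C_T := \max_{\tau\in[0,T]}\mathrm{Lip}(\Phi_\tau)$ we get $\sup_{s\le t}\|\Gamma[\rho]_s-\Gamma[\sigma]_s\|_{BL^*}\le K C_T\int_0^t\sup_{r\le\tau}\|\rho_r-\sigma_r\|_{BL^*}\,\dd\tau$. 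Iterating yields $\sup_{[0,T]}\|\Gamma^n[\rho]-\Gamma^n[\sigma]\|_{BL^*}\le\frac{(KC_TT)^n}{n!}\sup_{[0,T]}\|\rho-\sigma\|_{BL^*}$, so some power of $\Gamma$ is a contraction and Banach's theorem produces a unique fixed point in $\CMcal X$ on all of $[0,T]$ at once; an a priori moment estimate applied directly to \eqref{fund_eq_meas}, combined with the equivalence above, shows conversely that any distributional solution in the class of the statement lies in $\CMcal X$ and hence coincides with this fixed point, which also gives the claimed $\rho_t\in\CMcal P_2(\R^d\times S)$.

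I expect the genuine difficulties to be two. First, rigorously establishing the equivalence between the distributional and the mild formulations: the nontrivial direction is that an arbitrary distributional solution satisfies the Duhamel identity, which requires transporting test functions backward along the characteristics of $-\nabla f$ and a small mollification, since $\nabla f$ is only Lipschitz (so $\Phi$ is merely Lipschitz in space and Lemma \ref{pushfwd} is invoked slightly beyond its literal $C^1$ hypothesis, which one must either sharpen or circumvent by approximating $f$). Second, handling the unboundedness of $\nabla f$ on $\R^d$: the time derivative of $\Phi$ is not globally bounded, so both the Lipschitz-in-time estimate and the self-map property have to be run inside the class of probability measures with finite second moment, with all constants tracked through Grönwall, rather than on the full cone $\CMcal M^+(\R^d\times S)$.
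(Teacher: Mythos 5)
Your proposal is correct in outline but takes a genuinely different route from the paper. The paper also builds on the characteristic flow of $-\nabla f(\cdot,s)$, but it first replaces $f$ by smooth approximations $f^n$, pushes the equation forward along the characteristics to obtain an ODE in $\CMcal{M}(\R^d\times S)$ solved by Picard--Lindel\"of, derives uniform second-moment bounds, extracts a weak limit by Prokhorov's theorem as $n\to\infty$, and then proves uniqueness separately by a duality argument with the adjoint equation \eqref{fadj_eq}. You instead absorb the $-K\rho$ term into a damped pushforward semigroup, write the equation in Duhamel form so that the unknown enters only linearly and through its $x$-marginal, and run a Banach fixed-point argument with the explicit iterate bound $(KC_TT)^n/n!$; this yields existence and uniqueness of the mild solution in one stroke and dispenses with the compactness extraction. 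What your route does not avoid, and what you correctly flag, is the duality step: to get uniqueness in the full class of distributional solutions you must show that every such solution obeys the Duhamel identity, which amounts to solving the backward transport problem along the same characteristics --- essentially the paper's adjoint argument in disguise. Likewise, since $\nabla f$ is only Lipschitz the flow is merely Lipschitz and its time derivative is unbounded on $\R^d$, so Lemma \ref{pushfwd} as stated ($G\in C^1$ with $\|G'\|_\infty<\infty$) does not literally apply; both your proof and the paper's need either the smooth approximation $f^n$ or a version of the lemma localized to curves with bounded moments, so the technical overhead there is comparable. Net effect: your argument is cleaner on the existence side and makes transparent that the only genuine coupling is through the marginal map $\rho\mapsto\bar\rho\otimes\mu$, while the paper's compactness route is the one that generalizes more readily when the fixed-point structure is lost (e.g.\ under the nonlinear modifications discussed in section \ref{furt_per}).
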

\begin{proof} We consider a sequence $(f^n)_n$ of $C^2$ functions such that $\nabla f^n$ is $L$--Lipschitz continuous for every $n$, and $\nabla f^n \to \nabla f$ uniformly. First of all, we find a solution to equation \eqref{fund_eq_meas} with $f^n$ instead of $f$ with initial datum $\rho_0$. We fix $n \in \N$, exploit the method of characteristics and we write a characteristic line $(X, S)$ such that 
\begin{equation}\label{charct_eq}\dot X(t) = - \nabla f^n(X(t), S(t)), \qquad \dot S(t) = 0,
\end{equation}
with $X(0) = x \in \R^d$, $S(0) = s \in S$. This characteristic line is well-defined and $C^1$-regular, since $\nabla f^n$ is Lipschitz continuous. Then we have that 
$$\frac \dd{\dd t} (X_t, S_t) \#\rho_t^n = \partial_t \left((X_t, S_t)\#\rho_t^n\right) - (X_t, S_t)\#(\nabla \rho_t^n \cdot \nabla f^n),$$ where $(X_t, S_t)\# \rho_t^n$ indicates the pushforward given by the characteristic $(X_t, S_t)$ applied to the measure $\rho_t^n$. We apply the product rule 
$$\nabla \cdot(\rho^n\nabla f^n) = \nabla\rho^n \cdot \nabla f^n + \rho^n\Delta f^n$$ and we can substitute the previous expression into the pushforwarded version of equation \eqref{fund_eq_meas} to get:
\begin{equation}\label{ode} \frac \dd{\dd t}(X_t, S_t)\#\rho_t^n = (X_t, S_t)\#\left((\Delta f^n - K)\rho_t^n\right) + K(X_t, S_t)\# (\bar \rho_t^n \otimes \mu) \eqqcolon A(t, (X_t, S_t)\#\rho_t^n). \end{equation}
The operator $A: [0, T] \times \CMcal M(\R^d \times S) \to \CMcal M(\R^d \times S)$ is well defined, continuous in the first variable and Lipschitz continuous in the second one, since $\Delta f^n$ are continuous and uniformly bounded. Therefore, we can solve equation \eqref{ode} for $(X_t, S_t)\#\rho^n$, where from the Picard-Lindelöf theorem we obtain that there exists a unique $\eta^n \in C^1([0, T], \CMcal M(\R^d \times S))$ which solves 
\begin{equation}\label{ode2}
\frac \dd{\dd t} \eta_t^n = A(t, \eta_t^n) = (\Delta f^n(X(t), S(t)) - K)\eta_t^n + K\bar \eta_t^n \otimes \mu.
\end{equation}
Moreover, $\eta^n$ satisfies $\eta_t^n = e^{tA} \eta_0$, where $e^{tA}$ is the semigroup associated to $A$, therefore, for every positive initial datum $\eta_0$, $\eta_t^n \in \CMcal M^+(\R^d \times S)$. It holds:
$$\Delta f^n(X(t), S(t))\eta_t^n = \nabla \cdot(\eta_t^n\nabla f^n(X(t), S(t))) - \nabla \eta_t^n \cdot \nabla f^n(X(t), S(t)),$$
and therefore:
$$\frac{\dd}{\dd t}\eta_t^n + \nabla\eta_t^n \cdot \nabla f^n(X(t), S(t)) = \nabla \cdot(\eta_t^n\nabla f^n(X(t), S(t))) - K(\bar \eta_t^n \otimes \mu - \eta_t^n).$$
One can notice that, if $\eta_0 \coloneqq \rho_0$, this is the pushforwarded version of \eqref{fund_eq_meas} with respect to $(X_t, S_t)$, which is the unique solution of the characteristic equation \eqref{charct_eq}, gaining that for every $n \in \N$ there exists a unique solution $\rho^n \in C^{0,1}([0, T], (\CMcal M^+(\R^d \times S), \|\cdot\|_{BL^*}))$, such that:
\begin{equation}\label{eq_n} \frac{\partial \rho^n}{\partial t} = \nabla \cdot(\rho^n \nabla f^n(x, s)) + K  (\bar{\rho^n}\otimes \mu - \rho^n ), \end{equation}
which we interpret as a weak formulation of the equation. This is due to lemma \ref{pushfwd} applied to the reversed characteristic $G(t) \coloneqq (X(R - t), S(R - t))$ with initial datum $(X(R), S(R))^{-1}$ as $R$ varies in $[0, T]$, which is regular, thanks to the theorems on differentiability with respect to the initial data. In particular, if we integrate \eqref{eq_n} against a constant function, we obtain that $\frac \dd{\dd t}\int_{\R^d \times S}\,\dd\rho_t^n(x, s) = 0$, which implies conservation of mass and therefore $\rho_t^n \in \CMcal P(\R^d \times S)$ for every $t \geq 0$ and $n \in \N$.
For every $t \geq 0$, this solution $\rho_t^n \in \CMcal{P}_2(\R^d \times S)$:
\begin{align*}\frac \dd{\dd t} &\int_{\R^d \times S}|x|^2\,\dd\rho_t^n(x,s) = -2\int_{\R^d \times S}x \cdot \nabla f^n(x, s)\,\dd\rho_t^n(x, s) = \\
&= -2\int_{\R^d \times S}(x - 0)\cdot(\nabla f^n(x, s) - \nabla f^n(0, s))\,\dd \rho_t^n(x, s) - 2\int_{\R^d \times S}x\cdot \nabla f^n(0, s)\,\dd\rho_t^n(x, s) \leq \\ &\leq 2L\int_{\R^d \times S}|x|^2\, \dd\rho_t^n(x, s) + 2C\int_{\R^d \times S}|x|\,\dd \rho_t^n(x, s) \leq \\ &\leq (2L + 1)\int_{\R^d \times S}|x|^2\,\dd \rho_t^n(x, s) + C,
\end{align*}
since $|\nabla f(0, s)| \leq C$, because $\nabla f(0, s)$ is continuous (and, therefore, bounded) on $\bar S$. Applying the Grönwall lemma, we obtain that for $t \in [0, T]$:
\begin{equation} \label{bound_2}
 \int_{\R^d \times S}|x|^2\,\dd\rho_t^n(x,s) \leq \left(\int_{\R^d \times S}|x|^2\,\dd\rho_0(x,s)\right)e^{(2L + 1)t} + C\frac{e^{(2L + 1)t} -1}{2L + 1}.
 \end{equation}
Considering that $S$ is bounded, $\int_{\R^d \times S}|s|^2\,\dd\rho_t^n(x, s)$ is furthermore uniformly bounded in time. The sequence $(\rho_t^n)_n$ is hence tight and by the Prokhorov theorem (theorem $5.13$ in \cite{C11}), we can select a subsequence (without changing the notations) weakly convergent to $\rho_t \in \CMcal P_2(\R^d \times S)$ for every $t \in [0, T]$, thanks to the uniform bound \eqref{bound_2}. \\ 
We rewrite \eqref{eq_n} in weak form for $\phi \in C_b(S, \CMcal D(\R^d))$, that is the space of bounded continuous functions on $S$ and $C^\infty$ with compact support on $\R^d$, for every $t \in [0, T]$:
\begin{equation*}\begin{aligned}  \label{first_eq}
\int_{\R^d \times S}\phi\,\dd\rho_t^n - &\int_{\R^d \times S}\phi\,\dd\rho_0 = \\&= -\int_0^t\int_{\R^d \times S}(\nabla\phi \cdot \nabla f^n +  K\phi)\,\dd\rho_s^n\,\dd s + \int_0^t\int_{\R^d \times S}K\phi \,\dd(\bar \rho_s^n \otimes \mu)\,\dd s
\end{aligned}\end{equation*}
and, since $\int \nabla \phi\cdot \nabla f^n\,\dd \rho_s^n = \int \nabla \phi\cdot (\nabla f^n - \nabla f)\,\dd \rho_s^n + \int \nabla \phi\cdot \nabla f\,\dd \rho_s^n$, by weak convergence:
\begin{equation}\begin{aligned} \label{second_eq}
\int_{\R^d \times S}\phi\,\dd\rho_t - &\int_{\R^d \times S}\phi\,\dd\rho_0 = \\ &= -\int_0^t\int_{\R^d \times S}(\nabla\phi \cdot \nabla f +  K\phi)\,\dd\rho_s\,\dd s + \int_0^t\int_{\R^d \times S}K\phi \,\dd(\bar \rho_s \otimes \mu)\,\dd s,
\end{aligned}\end{equation}
thanks to the Lebesgue theorem, because the involved functions are bounded on $[0, T]$. Furthermore, thanks to \eqref{second_eq}, $\rho \in C^0([0, T], (\CMcal M^+(\R^d \times S), \|\cdot\|_{BL^*}))$. Equation \eqref{ode2} tells us that the Lipschitz constants of $\eta^n$ are uniformely bounded and therefore from lemma \ref{pushfwd} it follows that also $\rho \in C^{0,1}([0, T], (\CMcal{M}^+(\R^d \times S), \|\cdot\|_{BL^*}))$. This shows existence of a weak solution to equation \eqref{fund_eq_meas}.  \\
To study the uniqueness, we write an adjoint version of \eqref{fund_eq_meas}:
\begin{equation} \label{fadj_eq}
    \frac{\partial \phi}{\partial t} = - \nabla \phi \cdot \nabla f + K\int_S \phi(x, s')\,\dd\mu(s') - K\phi + \psi
\end{equation}
for which we can repeat the argument with the characteristic lines as in the first part of the proof. Therefore, for every initial datum there exists a unique solution $\phi \in C^{0,1}([0, T], C_b(\R^d \times S))$ for each continuous and bounded right-hand side $\psi$. Assuming that $\rho^1$ and $\rho^2$  are two weak solutions of \eqref{fund_eq_meas} with the same initial datum, a standard adjoint argument by integrating \eqref{fadj_eq} against $\rho_1$ and $\rho_2$ shows, thanks to \eqref{second_eq}, that 
$$ \int_0^t \int_{\R^d \times S} \psi \, \dd (\rho^1 - \rho^2) \, \dd r = 0 $$
for each continuous function $\psi$ and $t > 0$, hence $\rho^1 = \rho^2$.
\end{proof}
Let us introduce also the space $C^1([0,T], L^1(\R^d \times S))$, $T > 0$, where the integration on $\R^d \times S$ is always performed with respect to the product measure $\L_d \otimes \mu$, endowed with the following norm:
$$ \| u \|_{C^1([0,T], L^1(\R^d \times S))} \coloneqq \sup_{t \in [0, T]}\left(\|u(t)\|_{L^1(\R^d \times S)} + \|u'(t)\|_{L^1(\R^d \times S)}\right).$$
\begin{thm}For every initial datum $\nu_0 \in \CMcal{P}_2(\R^d \times S)$ such that $\nu_0 \ll \L_d \otimes \mu$ and every $T > 0$, there exists a unique solution $\nu \in C^{0,1}([0, T], (\CMcal{M}^+(\R^d \times S), \|\cdot\|_{BL^*}))$ for equation \eqref{fund_eq_meas} such that $\nu_t \in \CMcal{P}_2(\R^d \times S)$ and $\nu_t \ll \L_d \otimes \mu$ for every $t \in [0, T]$. This means that, if $\rho_0 \ll \L_d \otimes \mu$ in equation \eqref{fund_eq_meas}, then $\rho_t \ll \L_d \otimes \mu$ for every $t \in [0, T]$ and equation \eqref{fund_eq_meas} is reduced to \eqref{fund_eq}. 
\end{thm}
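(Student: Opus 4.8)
The statements about existence, uniqueness, second moments and $BL^*$-regularity are already contained in Theorem \ref{ex_thm} (applied to $\nu_0=\rho_0$), so the only genuinely new content is that absolute continuity with respect to $\L_d\otimes\mu$ propagates in time. Once this is established, the reduction of \eqref{fund_eq_meas} to \eqref{fund_eq} is purely formal: writing the weak formulation \eqref{second_eq} in terms of the density $\nu_t=\dd\rho_t/\dd(\L_d\otimes\mu)$, the $\R^d$-marginal of $\rho_t$ has density $\bar\nu_t(x)=\int_S\nu_t(x,s)\,\dd\mu(s)$, so the switching term $K(\bar\rho_t\otimes\mu-\rho_t)$ acquires density $K\int_S(\nu_t(x,s')-\nu_t(x,s))\,\dd\mu(s')$, which is exactly \eqref{fund_eq}. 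Thus the plan is to show that the unique solution $\rho$ of Theorem \ref{ex_thm} stays in $L^1(\R^d\times S,\L_d\otimes\mu)$ when $\rho_0$ does.

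The approach I would take is to recast the problem in mild (Duhamel) form and run the contraction entirely inside $L^1$. Let $\Phi_t=\Phi_t(x,s)=(\phi_t^s(x),s)$ be the flow of the characteristic system \eqref{charct_eq}; since $\nabla f$ is $L$-Lipschitz, Grönwall's inequality shows $\Phi_t$ (and $\Phi_t^{-1}=\Phi_{-t}$) is bi-Lipschitz on $\R^d\times S$ with constants depending only on $L$ and $t$ — note this uses only the Lipschitz hypothesis on $\nabla f$, not the a.e. existence of $\Delta f$. Setting $P_t\nu\coloneqq\Phi_t\#\nu$ for the associated mass- and positivity-preserving transport semigroup, equation \eqref{fund_eq_meas}, i.e. $\partial_t\rho=\nabla\cdot(\rho\nabla f)-K\rho+K\bar\rho\otimes\mu$, is equivalent (in the mild sense) to the fixed-point identity
\begin{equation*}
\rho_t=e^{-Kt}P_t\rho_0+K\int_0^t e^{-K(t-\sigma)}P_{t-\sigma}(\bar\rho_\sigma\otimes\mu)\,\dd \sigma .
\end{equation*}
Using $\|P_{t-\sigma}\nu\|_{TV}=\|\nu\|_{TV}$ together with $\|(\bar\rho^1_\sigma-\bar\rho^2_\sigma)\otimes\mu\|_{TV}=\|\bar\rho^1_\sigma-\bar\rho^2_\sigma\|_{TV}\le\|\rho^1_\sigma-\rho^2_\sigma\|_{TV}$, the right-hand side is a contraction of $C^0([0,T],(\CMcal M^+(\R^d\times S),\|\cdot\|_{TV}))$ with Lipschitz constant $\sup_{t\in[0,T]}\int_0^t Ke^{-K(t-\sigma)}\,\dd \sigma=1-e^{-KT}<1$, so Banach's theorem gives a unique mild solution on all of $[0,T]$; differentiating the Duhamel formula shows it is a distributional solution, hence by the uniqueness part of Theorem \ref{ex_thm} it coincides with $\rho$. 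The key point is that this contraction leaves invariant the \emph{closed} subspace $C^0([0,T],L^1(\R^d\times S,\L_d\otimes\mu))$ ($L^1$ is closed in $\CMcal M$ for the total variation norm): indeed (i) $P_t$ maps $L^1$ into $L^1$, since the bi-Lipschitz $\Phi_t$ and $\Phi_t^{-1}$ send $(\L_d\otimes\mu)$-null sets to $(\L_d\otimes\mu)$-null sets (slice in $s$ and use that bi-Lipschitz maps on $\R^d$ preserve Lebesgue-null sets); (ii) if $\rho_\sigma\ll\L_d\otimes\mu$ then by Fubini $\bar\rho_\sigma\ll\L_d$, whence $\bar\rho_\sigma\otimes\mu\ll\L_d\otimes\mu$ with density independent of $s$; (iii) a Bochner integral of an $L^1$-valued curve lies in $L^1$. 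So starting from $\rho_0\in L^1$ the unique fixed point stays in $C^0([0,T],L^1)$, i.e. $\rho_t\ll\L_d\otimes\mu$ for all $t\in[0,T]$.

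The main obstacle, and the reason one cannot simply quote the construction in Theorem \ref{ex_thm}, is that there the solution is produced as a weak-$*$ limit of approximations $\rho^n$ (which \emph{are} absolutely continuous), and weak convergence of measures does not preserve absolute continuity — the limit could a priori concentrate on a null set. The mild reformulation circumvents this by performing the iteration inside the closed subspace $L^1$ from the start, so the crux is really checking carefully the three invariance properties above, in particular the null-set preservation along the $s$-dependent characteristic flow. (An alternative route would keep the $\rho^n$ and establish a uniform equi-integrability bound on the densities $\nu^n_t$ via de la Vallée-Poussin — controlling $\int\Theta(\nu^n_t)$ for a superlinear convex $\Theta$ using the uniform bounds on $\Delta f^n$ and on the Jacobians $\det D_x\phi_t^{n,s}=\exp(-\int_0^t\Delta f^n)$ — and then pass to the limit; this is more computational, which is why I would prefer the mild-solution argument.)
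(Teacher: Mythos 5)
Your proof is correct in substance but follows a genuinely different route from the paper's. The paper's own proof is a single line: it observes that $\|\nu\,\L_d\otimes\mu\|_{TV}=\|\nu\|_{L^1}$, so that $L^1(\R^d\times S,\L_d\otimes\mu)$ sits isometrically as a closed subspace of $(\CMcal{M},\|\cdot\|_{TV})$, and asserts that the whole construction of theorem \ref{ex_thm} (characteristics, the Picard iteration for \eqref{ode2}, the passage to the limit) can be repeated with the norm $\|\cdot\|_{C^1([0,T],L^1(\R^d\times S))}$. You instead recast the equation in mild/Duhamel form and run a Banach fixed-point argument, verifying explicitly the three mechanisms by which absolute continuity propagates: the bi-Lipschitz characteristic flow preserves $(\L_d\otimes\mu)$-null sets, marginalization preserves absolute continuity, and Bochner integrals of $L^1$-valued curves stay in $L^1$. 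Your diagnosis of why one cannot simply quote theorem \ref{ex_thm} --- the solution there is produced as a weak-$*$ limit, which does not preserve absolute continuity --- is a fair point that the paper's terse proof leaves implicit; the intended reading is that the fixed-point iteration is itself performed inside $L^1$, which is precisely the closed-subspace-invariance observation you make, so the two arguments deliver the same conclusion, yours being the more self-contained and explicit. One small wrinkle in your write-up: the Duhamel map is not well defined on all of $C^0([0,T],(\CMcal{M}^+(\R^d\times S),\|\cdot\|_{TV}))$, because $t\mapsto P_t\rho_0$ fails to be $TV$-continuous for singular $\rho_0$ (e.g.\ for $\rho_0=\delta_{x_0}\otimes\mu$ with $\nabla f(x_0,\cdot)\neq 0$ one has $\|P_t\rho_0-\rho_0\|_{TV}=2$ for small $t>0$), and Bochner measurability in the non-separable space $(\CMcal{M},\|\cdot\|_{TV})$ is delicate; the clean fix is to run the contraction directly on the closed subspace $C^0([0,T],L^1(\R^d\times S,\L_d\otimes\mu))$, where $t\mapsto P_t\rho_0$ is continuous by continuity of composition with the flow in $L^1$ and where your estimate $1-e^{-KT}<1$ applies verbatim, and then to invoke the uniqueness part of theorem \ref{ex_thm} exactly as you do.
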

\begin{proof}Since $\|f\,\L_d \otimes \mu\|_{TV} = \|f\|_{L^1}$, one can repeat the same reasoning of theorem \ref{ex_thm} with norm $\|\cdot \|_{C^1([0,T], L^1(\R^d \times S))}$.
\end{proof}
\section{Coupling method and stability} \label{scms}
In order to investigate the stability of equation \eqref{fund_eq_meas} with respect to changes of the parameters, we shall follow the approach exposed in \cite{FP19}. If $\mu, \nu \in \CMcal P_2(\R^d \times S)$, we introduce their $2$-Wasserstein distance by:
$$W_2(\mu, \nu)^2 \coloneqq \inf_{\xi \in Coup(\mu, \nu)}\int_{\R^{2d} \times S^2}|x_1 - x_2|^2 + |s_1 - s_2|^2\dd\xi(x_1, x_2, s_1, s_2)$$
where $Coup(\mu, \nu)$ is the set of \emph{couplings} between $\mu$ and $\nu$, i.e. the set of $\xi \in \CMcal{P}(\R^{2d}\times S^2)$ such that the marginals of $\xi$ on $\R^d \times S$ are $\mu$ and $\nu$. Similar definitions hold for Wasserstein distances between probability measures on different spaces (for instance, if $\mu, \nu \in \CMcal{P}_2(S)$) and, for the sake of simplicity, we will use the same notation. \\
Let $\rho_1, \rho_2$ be solutions of our equation \eqref{fund_eq_meas} with initial data $\rho_{0, 1}$ and $\rho_{0, 2}$ and probability measures $\mu_1$ and $\mu_2$ in equation \eqref{fund_eq_meas}. Let $\Pi_0$ be a coupling between $\rho_{0, 1}$ and $\rho_{0, 2}$, initial datum of the solution $\Pi$ of the measure-valued coupling equation:
\begin{equation} \begin{aligned}\label{coup_eq}
     \frac{\partial \Pi_t}{\partial t}  =& \, \nabla_{x_1} \cdot(\Pi_t \nabla_{x_1} f(x_1,s_1)) +  \nabla_{x_2} \cdot(\Pi_t \nabla_{x_2} f(x_2,s_2)) 
     + \\ & + K~ \int_S \int_S ( \Gamma(s_1,s_2) \Pi_t(x_1,\dd s_1',x_2,\dd s_2') - \Pi_t(x_1,s_1,x_2, s_2))
 \end{aligned} \end{equation}
 where $\Gamma \in \CMcal{P}(S^2)$ is the optimal coupling for the quadratic cost between $\mu_1$ and $\mu_2$, that is:
 $$\Gamma \coloneqq \argmin_{\gamma \in Coup(\mu_1, \mu_2)}\, \int_{S^2}|s_1 - s_2|^2\,\dd \gamma(s_1, s_2).$$
 If, for example, $\mu_1 = \mu_2$, then $\Gamma$ has the two marginals equal to $\mu_1$ and support on the diagonal of $S^2$ ($\Gamma(diag(S^2)) = 1$), that is $\Gamma = \mu_1\delta_{s_1 - s_2}$. We refer to \cite{SAN15} for further information.
 \begin{lemma}For every initial datum $\Pi_0 \in \CMcal{P}_2(\R^{2d} \times S^2)$ and $T > 0$, there exists a unique solution $\Pi \in C^{0,1}([0, T], (\CMcal M^+(\R^{2d} \times S^2), \|\cdot\|_{BL^*}))$ to equation \eqref{coup_eq} such that $\Pi_t \in \CMcal P_2(\R^{2d} \times S^2)$ for every $t \in [0, T]$.
 \end{lemma}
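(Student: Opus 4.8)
The plan is to recognize that \eqref{coup_eq} is nothing but the fundamental equation \eqref{fund_eq_meas} posed on a doubled state space, so that Theorem \ref{ex_thm} applies essentially verbatim. First I would set $y \coloneqq (x_1,x_2) \in \R^{2d}$, $\sigma \coloneqq (s_1,s_2) \in S^2$ and introduce the doubled potential
$$\tilde f(y,\sigma) \coloneqq f(x_1,s_1) + f(x_2,s_2),$$
for which $\nabla_y \tilde f(y,\sigma) = \big(\nabla_{x_1} f(x_1,s_1),\, \nabla_{x_2} f(x_2,s_2)\big)$, so that the two transport terms on the right-hand side of \eqref{coup_eq} combine into $\nabla_y\cdot(\Pi_t \nabla_y \tilde f)$. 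Reading the double integral in \eqref{coup_eq} as the tensor product $\bar\Pi_t \otimes \Gamma$ of the $\R^{2d}$--marginal $\bar\Pi_t$ of $\Pi_t$ (obtained by integrating out $s_1',s_2'$) with the fixed measure $\Gamma \in \CMcal P(S^2)$, the switching term becomes $K(\bar\Pi_t\otimes\Gamma - \Pi_t)$. Hence \eqref{coup_eq} is exactly \eqref{fund_eq_meas} with $(\R^d, S, f, \mu)$ replaced by $(\R^{2d}, S^2, \tilde f, \Gamma)$.

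Next I would verify that the standing assumptions of Section \ref{ex_un} survive this substitution. Since $f\in C^{1,1}(\R^d\times S)\cap C^0(\R^d\times\bar S)$ and the two summands defining $\tilde f$ involve disjoint groups of variables, $\tilde f\in C^{1,1}(\R^{2d}\times S^2)\cap C^0(\R^{2d}\times\overline{S^2})$. For the Lipschitz bound on $\nabla_y\tilde f$, squaring and summing the componentwise estimates gives
$$|\nabla_y\tilde f(y,\sigma)-\nabla_y\tilde f(y',\sigma')|^2 = \sum_{i=1}^{2} |\nabla_{x_i}f(x_i,s_i)-\nabla_{x_i}f(x_i',s_i')|^2 \le L^2\,|(y,\sigma)-(y',\sigma')|^2,$$
so $\nabla_y\tilde f$ is again $L$--Lipschitz, and $S^2\subseteq\R^{2m}$ is bounded because $S$ is. The characteristic system corresponding to \eqref{charct_eq} is then $\dot y = -\nabla_y\tilde f(y,\sigma)$, $\dot\sigma = 0$, i.e. $\dot x_i = -\nabla_{x_i} f(x_i,s_i)$, $\dot s_i = 0$ for $i=1,2$, which is $C^1$-regular just as before. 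Invoking Theorem \ref{ex_thm} on $\R^{2d}\times S^2$ yields, for every $\Pi_0\in\CMcal P_2(\R^{2d}\times S^2)$ and every $T>0$, a unique distributional solution $\Pi\in C^{0,1}([0,T],(\CMcal M^+(\R^{2d}\times S^2),\|\cdot\|_{BL^*}))$ of \eqref{coup_eq} with $\Pi_t\in\CMcal P_2(\R^{2d}\times S^2)$ for all $t\in[0,T]$, which is the assertion.

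I do not expect a genuine analytic obstacle here: the content of the lemma is the \emph{structural} observation that the coupling equation \eqref{coup_eq} was designed precisely so that it inherits the form of \eqref{fund_eq_meas} on the product space. The one point that deserves explicit care — and the only place where the argument is not a mechanical copy — is confirming that the nonlocal term in \eqref{coup_eq} really is $K(\bar\Pi_t\otimes\Gamma-\Pi_t)$ with $\Gamma$ a \emph{fixed} probability measure on $S^2$, independent of $\Pi$; once this is pinned down, the Picard--Lindelöf fixed point for the pushed-forward ODE \eqref{ode2}, the second-moment Grönwall estimate, the Prokhorov compactness selection, and the adjoint uniqueness argument all transfer word for word with $\tilde f$, $S^2$ and $\Gamma$ in place of $f$, $S$ and $\mu$. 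If one prefers not to rely on "verbatim" reasoning, the alternative is simply to re-run those four steps explicitly on $\R^{2d}\times S^2$, which requires no new idea.
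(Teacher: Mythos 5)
Your proposal is correct and follows essentially the same route as the paper: the paper's proof likewise sets $x=(x_1,x_2)$, $s=(s_1,s_2)$, $g(x,s)=f(x_1,s_1)+f(x_2,s_2)$, observes that \eqref{coup_eq} is then an equation of type \eqref{fund_eq_meas} with $\mu=\Gamma$, and invokes Theorem \ref{ex_thm}. Your additional verification that the standing assumptions (the $C^{1,1}$ regularity and $L$--Lipschitz bound for the doubled potential, the boundedness of $S^2$) survive the substitution is a useful explicit check that the paper leaves implicit.
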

 \begin{proof} This equation is well--posed thanks to the fact that $\nabla_{x_1} \cdot(\Pi_t \nabla_{x_1} f(x_1,s_1)) +  \nabla_{x_2} \cdot(\Pi_t \nabla_{x_2} f(x_2,s_2)) = \nabla_x \cdot(\Pi_t \nabla_xg(x,s))$, where $x \coloneqq (x_1, x_2), s \coloneqq (s_1, s_2)$ and $g(x,s) \coloneqq f(x_1, s_1) + f(x_2, s_2)$ and hence $\Pi$ solves an equation of type \eqref{fund_eq_meas} with $\mu = \Gamma$ and we can apply theorem \ref{ex_thm}.
 \end{proof}
In particular, with our choice of $\Pi_0$, the marginals of $\Pi$ are $\rho_1$ and $\rho_2$ by uniqueness of the solution of \eqref{fund_eq_meas}.
 \begin{thm} \label{thm_stab}
Let $\rho_1$ be the solution of \eqref{fund_eq_meas} with initial datum $\rho_{0, 1} \in \CMcal{P}_2(\R^d \times S)$ and probability measure $\mu_1 \in \CMcal P(S)$ and $\rho_2$ the solution of \eqref{fund_eq_meas} with initial datum $\rho_{0, 2} \in \CMcal{P}_2(\R^d \times S)$ and probability measure $\mu_2 \in \CMcal P(S)$, i.e.
$$ \frac{\partial \rho_i}{\partial t} = \nabla \cdot(\rho_i \nabla f(x, s)) + K\int_S(\rho_i(x, s') - \rho_i(x, s))\dd \mu_i(s'),  \qquad i=1,2. $$
Then the following stability estimate holds:
  \begin{equation}W_2(\rho_1(t), \rho_2(t)) \leq e^{\frac 32 Lt}\,W_2(\rho_{0, 1}, \rho_{0, 2}) + \sqrt{\frac{Ke^{3Lt} - K}{3L}}W_2(\mu_1, \mu_2). \label{stab_ineq} \end{equation}
 \end{thm}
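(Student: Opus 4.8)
The plan is to run the coupling argument of \cite{FP19} by tracking the ``coupling cost'' carried along the flow. Choose $\Pi_0 \in Coup(\rho_{0,1},\rho_{0,2})$ to be an optimal coupling for $W_2$, and let $\Pi_t$ be the solution of \eqref{coup_eq} with this initial datum, which exists and is unique by the lemma preceding this statement; by uniqueness for \eqref{fund_eq_meas} its marginals stay equal to $\rho_1(t)$ and $\rho_2(t)$. Set
$$ D(t) \coloneqq \int_{\R^{2d}\times S^2}\bigl(|x_1-x_2|^2+|s_1-s_2|^2\bigr)\,\dd\Pi_t . $$
Since $\Pi_t$ is a coupling of $\rho_1(t)$ and $\rho_2(t)$, the definition of $W_2$ gives $W_2(\rho_1(t),\rho_2(t))^2 \le D(t)$, while $D(0) = W_2(\rho_{0,1},\rho_{0,2})^2$ by optimality of $\Pi_0$. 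Everything then reduces to a Grönwall estimate for $D$.

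To obtain it I would differentiate $D$ by testing the weak form of \eqref{coup_eq} against $\phi(x_1,x_2,s_1,s_2) = |x_1-x_2|^2+|s_1-s_2|^2$. Since $\nabla_{x_1}\phi = -\nabla_{x_2}\phi = 2(x_1-x_2)$, the two transport terms combine into $-2\int (x_1-x_2)\cdot\bigl(\nabla f(x_1,s_1)-\nabla f(x_2,s_2)\bigr)\,\dd\Pi_t$; estimating $|\nabla f(x_1,s_1)-\nabla f(x_2,s_2)| \le L(|x_1-x_2|+|s_1-s_2|)$ (from the Lipschitz hypothesis) and applying Young's inequality to the cross term bounds this contribution by $3L\,D(t)$. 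The jump term contributes $K\int\phi\,\dd(\bar\Pi_t\otimes\Gamma) - K\int\phi\,\dd\Pi_t$; because $\bar\Pi_t$ is the $(x_1,x_2)$-marginal of $\Pi_t$ and $|x_1-x_2|^2$ depends only on $(x_1,x_2)$, the $|x_1-x_2|^2$ pieces cancel exactly, and what remains is $K\bigl(\int_{S^2}|s_1-s_2|^2\,\dd\Gamma - \int|s_1-s_2|^2\,\dd\Pi_t\bigr) \le K\int_{S^2}|s_1-s_2|^2\,\dd\Gamma = K\,W_2(\mu_1,\mu_2)^2$, using that $\Gamma$ is the optimal coupling of $\mu_1$ and $\mu_2$. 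Hence $D'(t) \le 3L\,D(t) + K\,W_2(\mu_1,\mu_2)^2$, and Grönwall's lemma yields $D(t) \le W_2(\rho_{0,1},\rho_{0,2})^2 e^{3Lt} + W_2(\mu_1,\mu_2)^2\frac{e^{3Lt}-1}{3L}$; taking square roots and using subadditivity of the square root gives precisely \eqref{stab_ineq}.

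The main obstacle I expect is that $\phi$ has quadratic growth and so is not directly an admissible test function in the weak formulation of Theorem \ref{ex_thm}. I would deal with this by truncating, testing against $\phi_R = \phi\,\chi_R$ for smooth cutoffs $\chi_R$ and letting $R\to\infty$; this is legitimate because the second moments of $\Pi_t$ are bounded uniformly on $[0,T]$, which follows from the a priori bound \eqref{bound_2} applied to the coupled equation (recall $\Pi$ solves an equation of type \eqref{fund_eq_meas} with potential $g(x,s)=f(x_1,s_1)+f(x_2,s_2)$), and these bounds also ensure all integrals above are finite and that $t\mapsto D(t)$ is absolutely continuous. A secondary point to verify carefully is the identification of the gain part of the jump operator in \eqref{coup_eq} as $\bar\Pi_t\otimes\Gamma$, with the same $(x_1,x_2)$-marginal as $\Pi_t$, since that is exactly what makes the spatial contributions to the jump term drop out.
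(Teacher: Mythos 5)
Your proposal is correct and follows essentially the same route as the paper: the same coupling equation \eqref{coup_eq}, the same differentiation of the quadratic cost $D(t)$, the same Lipschitz-plus-Young bound yielding the factor $3L$, the same cancellation of the $|x_1-x_2|^2$ contribution in the jump term, and the same Grönwall step followed by subadditivity of the square root. Your explicit handling of the quadratic growth of the test function via truncation and uniform second-moment bounds is a technical point the paper leaves implicit, but it does not change the argument.
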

 \begin{proof} Since $\Pi$ is solution of the coupling equation \eqref{coup_eq}, by previous definition of $x$ and $s$:
\begin{align*}
\frac{\dd}{\dd t} &\int_{(\R^d)^2} \int_{S^2} (|x_1 - x_2|^2 + |s_1-s_2|^2) \dd\Pi_t(x_1,x_2,s_1,s_2) = \\
& =\int_{(\R^d)^2} \int_{S^2}\left(|x_1 - x_2|^2 + |s_1-s_2|^2\right)\Big(\nabla_{x_1} \cdot(\Pi_t \nabla_{x_1} f(x_1,s_1)) +  \nabla_{x_2} \cdot(\Pi_t \nabla_{x_2} f(x_2,s_2)) +
   \\  &\qquad K\int_{S^2}\left( \Gamma(\dd s_1,\dd s_2)\Pi_t(\dd x_1,\dd s_1',\dd x_2,\dd s_2')  - \Pi_t(\dd x_1,\dd s_1,\dd x_2,\dd s_2)\dd s_1' \dd s_2'\right)\Big)
   = \\
   &= \int_{(\R^d)^2} \int_{S^2}\Big( 2(x_1 - x_2)\cdot \left(\nabla_{x_2}f(x_2, s_2)- \nabla_{x_2}f(x_2, s_1) + \nabla_{x_2}f(x_2, s_1) - \nabla_{x_1}f(x_1, s_1)\right)\Pi_t +  \\
   &\qquad K\left(|x_1 - x_2|^2 + |s_1-s_2|^2\right)\int_{S^2}\left( \Gamma(\dd s_1,\dd s_2) \Pi_t(\dd x_1,\dd s_1', \dd x_2, \dd s_2')  - \Pi_t(\dd x_1,\dd s_1, \dd x_2, \dd s_2)\dd s_1'\dd s_2' \right)\Big)  \\ &\leq
   \int_{(\R^d)^2} \int_{S^2}\Big(2 L|x_1 - x_2|(|x_1 - x_2| + |s_1 - s_2|) + K|x_1 - x_2|^2 -  K|x_1 - x_2|^2 - \\ &\qquad - K|s_1 - s_2|^2\Big) \dd\Pi_t(x_1,x_2,s_1,s_2) + K\int_{S^2}|s_1 - s_2|^2\,\dd \Gamma(s_1,s_2) 
   \\ &\leq  \int_{(\R^d)^2} \int_{S^2} \Big(3L|x_1 - x_2|^2 + L|s_1 - s_2|^2  - K|s_1 - s_2|^2\Big) \dd\Pi_t(x_1,x_2,s_1,s_2) + \\ &\qquad  K \int_{S^2}|s_1 - s_2|^2\,\dd \Gamma(s_1,s_2)
   \\ &\leq 3L\int_{(\R^d)^2} \int_{S^2} (|x_1 - x_2|^2 + |s_1-s_2|^2)\, \dd\Pi_t(x_1, s_1, x_2, s_2) +  K \int_{S^2}|s_1 - s_2|^2\,\dd\Gamma(s_1, s_2).
 \end{align*}
 
 where we used integration by parts and the Young inequality. Exploiting the previous inequality and the optimal transport properties of $\Gamma$, we apply the Grönwall lemma and it holds that:
 \begin{equation}\begin{aligned}\label{wass_ineq}&\int_{(\R^d)^2} \int_{S^2}(|x_1 - x_2|^2 + |s_1-s_2|^2)\, \dd\Pi_t(x_1, s_1, x_2, s_2) 
  \\ &\leq e^{3Lt}\int_{(\R^d)^2} \int_{S^2} (|x_1 - x_2|^2 + |s_1-s_2|^2)\,  \dd\Pi_0(x_1, s_1, x_2, s_2) + K\frac{e^{3Lt} - 1}{3L}W_2^2(\mu_1, \mu_2),\end{aligned} \end{equation}
  where $W_2(\cdot, \cdot)$ is the quadratic Wasserstein metric.  Then, taking in \eqref{wass_ineq} the infimum on the possible couplings between $\rho_1$ and $\rho_2$ and considering that $\Pi_0$ can be any possible coupling of the initial data, we obtain the squared version of \eqref{stab_ineq}.
 \end{proof}
 Let us mention that indeed the exponent of the exponentially growing term in time in theorem \ref{thm_stab} can be optimized a bit further depending on $\frac{K}L$, but here we chose to give the more simpler statement with an exponent independent of $K$ instead. 
 
 With the same notations of theorem \ref{thm_stab}, we can state a stronger condition in case of strongly convex potentials.
 \begin{thm} \label{thm_conv}If $f$ is strongly convex uniformly in $s \in S$, that is $\exists m > 0$ such that $(\nabla f(x, s) - \nabla f(y, s))\cdot(x - y) \geq m|x - y|^2$ for all $x, y \in \R^d$ and $s \in S$, then  for every $\delta > 0$ $\exists \alpha > 0$ such that the following inequality holds:
  \begin{equation}\label{imp_ineq} W_2(\rho_1(t), \rho_2(t)) \leq \frac{\sqrt{\max\{1, \alpha\}}e^{-\frac c 2 t}}{\sqrt{\min\{1, \alpha\}}}W_2(\rho_{0,1}, \rho_{0,2}) + \sqrt{\alpha K\frac{1 - e^{-ct}}{c\min\{1, \alpha\}}}W_2(\mu_1, \mu_2), \end{equation}
 with $c = \min\{2m - \delta, K - \delta\}$. Moreover, the stationary state $\rho_\infty$ of \eqref{fund_eq_meas} is unique and the following asymptotic estimate holds:
 \begin{equation}\label{conv_stat} W_2(\rho(t), \rho_\infty) \leq \frac{\sqrt{\max\{1, \alpha\}}e^{-\frac c2 t}}{\sqrt{\min\{1, \alpha\}}}W_2(\rho_0, \rho_\infty), \end{equation}
 where $\rho$ is solution to \eqref{fund_eq_meas} with initial datum $\rho_0$; namely the convergence towards the stationary state is exponential.
If, in particular, $f$ has a unique minimizer $x^*$ for every $s \in S$, hence $\rho_\infty = \delta_{x^*} \otimes \mu$.
 \end{thm}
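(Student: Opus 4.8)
The plan is to re-run the coupling argument of Theorem \ref{thm_stab}, but replacing the quadratic cost on the label variable by a \emph{weighted} one that is tuned to the strong convexity constant $m$. Fix $\delta>0$ (the statement being of interest when $\delta<\min\{2m,K\}$), let $\alpha>0$ be a weight to be chosen, and follow along the solution $\Pi_t$ of the coupling equation \eqref{coup_eq} the functional
$$ t\longmapsto \int_{(\R^d)^2}\int_{S^2}\bigl(|x_1-x_2|^2+\alpha|s_1-s_2|^2\bigr)\dd\Pi_t(x_1,x_2,s_1,s_2). $$
Differentiating exactly as in the proof of Theorem \ref{thm_stab}, the transport part produces $\int\int 2(x_1-x_2)\cdot(\nabla f(x_2,s_2)-\nabla f(x_1,s_1))\dd\Pi_t$, which I split through the intermediate point $\nabla f(x_2,s_1)$: the term $2(x_1-x_2)\cdot(\nabla f(x_2,s_1)-\nabla f(x_1,s_1))$ is $\le -2m|x_1-x_2|^2$ by uniform strong convexity (applied at the common label $s_1$), while $2(x_1-x_2)\cdot(\nabla f(x_2,s_2)-\nabla f(x_2,s_1))\le \delta|x_1-x_2|^2+\tfrac{L^2}{\delta}|s_1-s_2|^2$ by the Lipschitz bound and Young's inequality. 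The switching part is untouched by the $|x_1-x_2|^2$ summand (its gain and loss cancel since it does not depend on $s$), and contributes $-K\alpha|s_1-s_2|^2$ from the loss term and $\alpha K\,W_2^2(\mu_1,\mu_2)$ from the gain term, using that the position marginal of $\Pi_t$ has unit mass and that $\Gamma$ is the optimal coupling of $\mu_1,\mu_2$.

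Collecting these estimates gives
$$ \frac{\dd}{\dd t}\int\!\!\int\bigl(|x_1-x_2|^2+\alpha|s_1-s_2|^2\bigr)\dd\Pi_t \le \int\!\!\int\Bigl[(\delta-2m)|x_1-x_2|^2+\bigl(\tfrac{L^2}{\delta}-K\alpha\bigr)|s_1-s_2|^2\Bigr]\dd\Pi_t+\alpha K\,W_2^2(\mu_1,\mu_2). $$
Since $c=\min\{2m-\delta,K-\delta\}$ satisfies $c\le 2m-\delta$ and $K-c\ge\delta>0$, choosing $\alpha\ge \tfrac{L^2}{\delta(K-c)}$ makes the bracket $\le -c\bigl(|x_1-x_2|^2+\alpha|s_1-s_2|^2\bigr)$, and Grönwall's lemma yields
$$ \int\!\!\int\bigl(|x_1-x_2|^2+\alpha|s_1-s_2|^2\bigr)\dd\Pi_t \le e^{-ct}\int\!\!\int\bigl(|x_1-x_2|^2+\alpha|s_1-s_2|^2\bigr)\dd\Pi_0+\alpha K\tfrac{1-e^{-ct}}{c}W_2^2(\mu_1,\mu_2). $$
To pass to $W_2$ I use $\min\{1,\alpha\}(|x_1-x_2|^2+|s_1-s_2|^2)\le |x_1-x_2|^2+\alpha|s_1-s_2|^2\le\max\{1,\alpha\}(|x_1-x_2|^2+|s_1-s_2|^2)$: the left inequality applied to $\Pi_t$, which is a coupling of $\rho_1(t)$ and $\rho_2(t)$, and the right one applied to a $\Pi_0$ chosen optimal for the unweighted cost between $\rho_{0,1},\rho_{0,2}$; dividing by $\min\{1,\alpha\}$ and taking square roots with $\sqrt{a+b}\le\sqrt a+\sqrt b$ gives precisely \eqref{imp_ineq}.

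For the stationary statements I specialize \eqref{imp_ineq} to $\mu_1=\mu_2=\mu$, so $W_2(\mu_1,\mu_2)=0$ and it becomes a genuine exponential contraction $W_2(\rho_1(t),\rho_2(t))\le C_\alpha e^{-ct/2}W_2(\rho_{0,1},\rho_{0,2})$ with $C_\alpha=\sqrt{\max\{1,\alpha\}/\min\{1,\alpha\}}$. Existence of $\rho_\infty$ follows (alternatively one may invoke Section \ref{exis_ss}): by uniqueness of solutions the flow has the semigroup property, hence $W_2(\rho(t),\rho(t+h))\le C_\alpha e^{-ct/2}W_2(\rho_0,\rho(h))$; for strongly convex $f$ the estimate $\tfrac{\dd}{\dd t}\int|x|^2\dd\rho_t\le -m\int|x|^2\dd\rho_t+C$ keeps the second moment bounded uniformly in $t$, and since $S$ is bounded one gets $\sup_{h\ge 0}W_2(\rho_0,\rho(h))<\infty$, so $(\rho(t))_t$ is Cauchy in $(\CMcal P_2(\R^d\times S),W_2)$ and converges to some $\rho_\infty$, which is stationary by passing to the limit in the weak formulation \eqref{second_eq} over fixed-length time intervals. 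Uniqueness: two stationary states are constant-in-time solutions to which the contraction applies, giving $W_2(\rho_\infty^1,\rho_\infty^2)\le C_\alpha e^{-ct/2}W_2(\rho_\infty^1,\rho_\infty^2)\to 0$; estimate \eqref{conv_stat} is the contraction with $\rho_1(t)=\rho(t)$ and $\rho_2(t)\equiv\rho_\infty$. Finally, when $f(\cdot,s)$ has the same minimizer $x^*$ for all $s$, I check that $\delta_{x^*}\otimes\mu$ is stationary: tested against $\phi$, the transport term is $\int_S\nabla\phi(x^*,s)\cdot\nabla f(x^*,s)\dd\mu(s)=0$ because $\nabla f(x^*,s)=0$ at the minimizer, and the position marginal of $\delta_{x^*}\otimes\mu$ is $\delta_{x^*}$, so $\bar\rho\otimes\mu-\rho=0$ kills the switching term; by the uniqueness just proved, $\rho_\infty=\delta_{x^*}\otimes\mu$.

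The main obstacle is the first part: doing the bookkeeping of the weighted functional so that the decay rate comes out as the clean $c=\min\{2m-\delta,K-\delta\}$ with an admissible $\alpha$. The point is that, unlike in Theorem \ref{thm_stab}, one must \emph{keep} the loss term $-K\alpha|s_1-s_2|^2$ and balance it against the $\tfrac{L^2}{\delta}|s_1-s_2|^2$ coming from the position–label cross term, which is exactly what forces $\alpha$ to be taken large; once this balance is in place, the passage to $W_2$ and the three stationary-state consequences are routine.
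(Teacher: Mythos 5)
Your proposal is correct and follows essentially the same route as the paper: the weighted cost $|x_1-x_2|^2+\alpha|s_1-s_2|^2$ along the coupling equation \eqref{coup_eq}, the split of the transport term through the intermediate gradient $\nabla f(x_2,s_1)$ so that strong convexity acts at a common label, Young's inequality to trade the cross term for $\delta|x_1-x_2|^2+\tfrac{L^2}{\delta}|s_1-s_2|^2$, a large $\alpha$ to absorb the label contribution into the $-K\alpha$ loss term, Grönwall, and the $\min/\max\{1,\alpha\}$ comparison to recover $W_2$; the stationary-state consequences are derived exactly as in the paper. Your bookkeeping with $\delta$ in place of the paper's auxiliary parameter $\epsilon$ yields the constant $c=\min\{2m-\delta,K-\delta\}$ a bit more directly, and your Cauchy-sequence argument for the existence of $\rho_\infty$ is a small addition the paper leaves to its later section on stationary states.
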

 \begin{figure}
    \centering
    \begin{subfigure}[b]{0.49\textwidth}
        \centering
        \includegraphics[width=\textwidth]{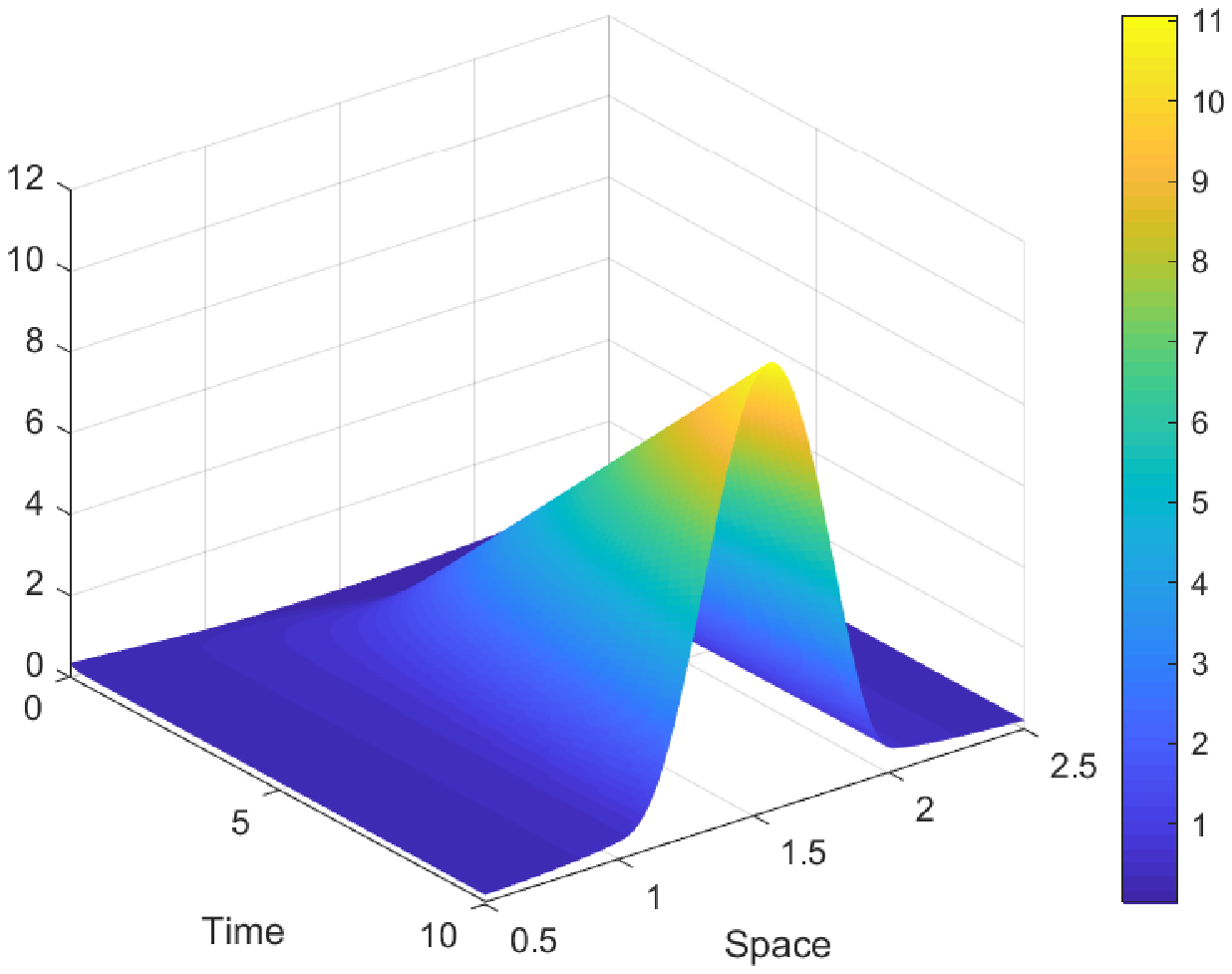}
    \end{subfigure}
    \begin{subfigure}[b]{0.49\textwidth}
        \centering
        \includegraphics[width=\textwidth]{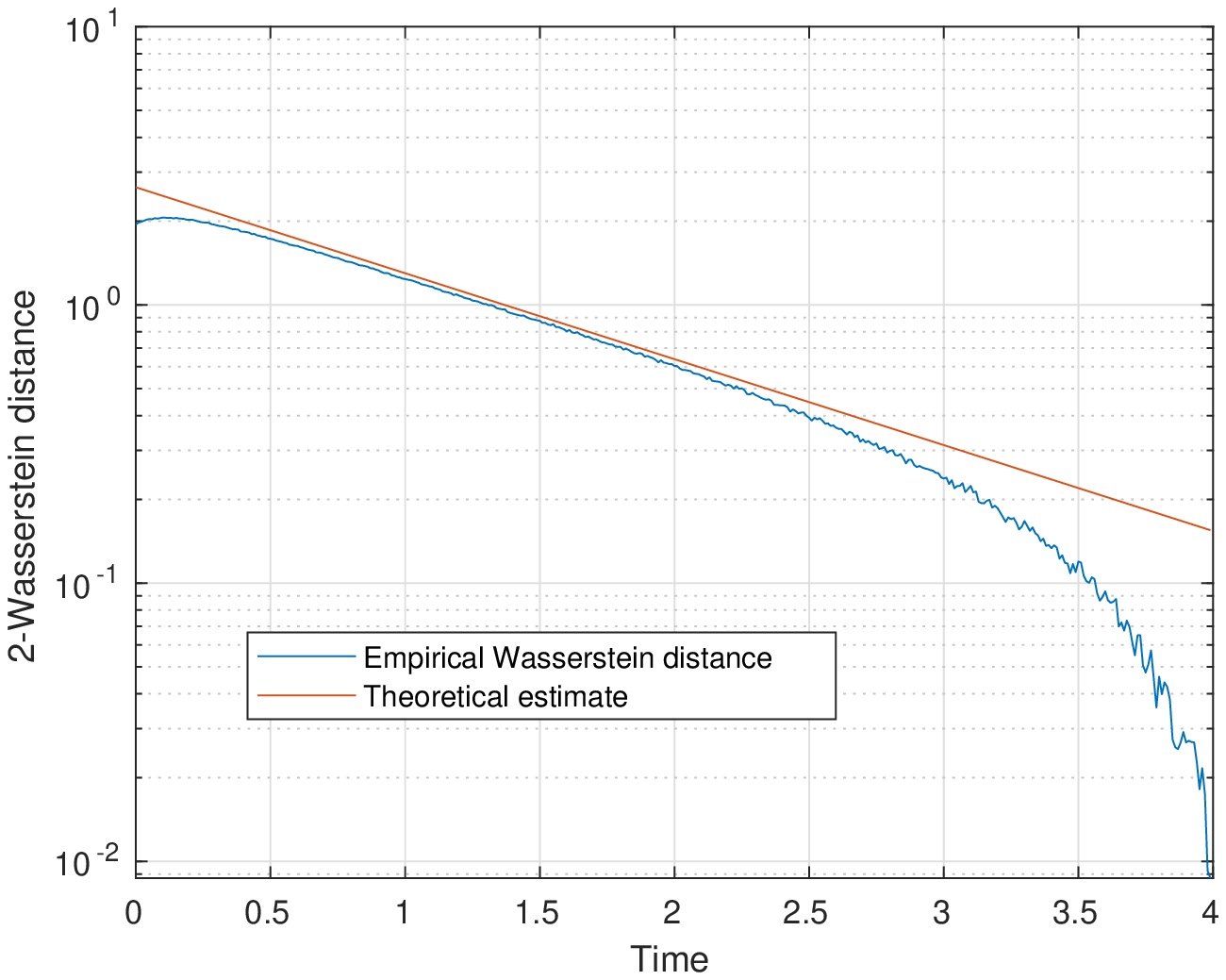}
    \end{subfigure}
    \begin{subfigure}[b]{0.49\textwidth}
        \centering
        \includegraphics[width=\textwidth]{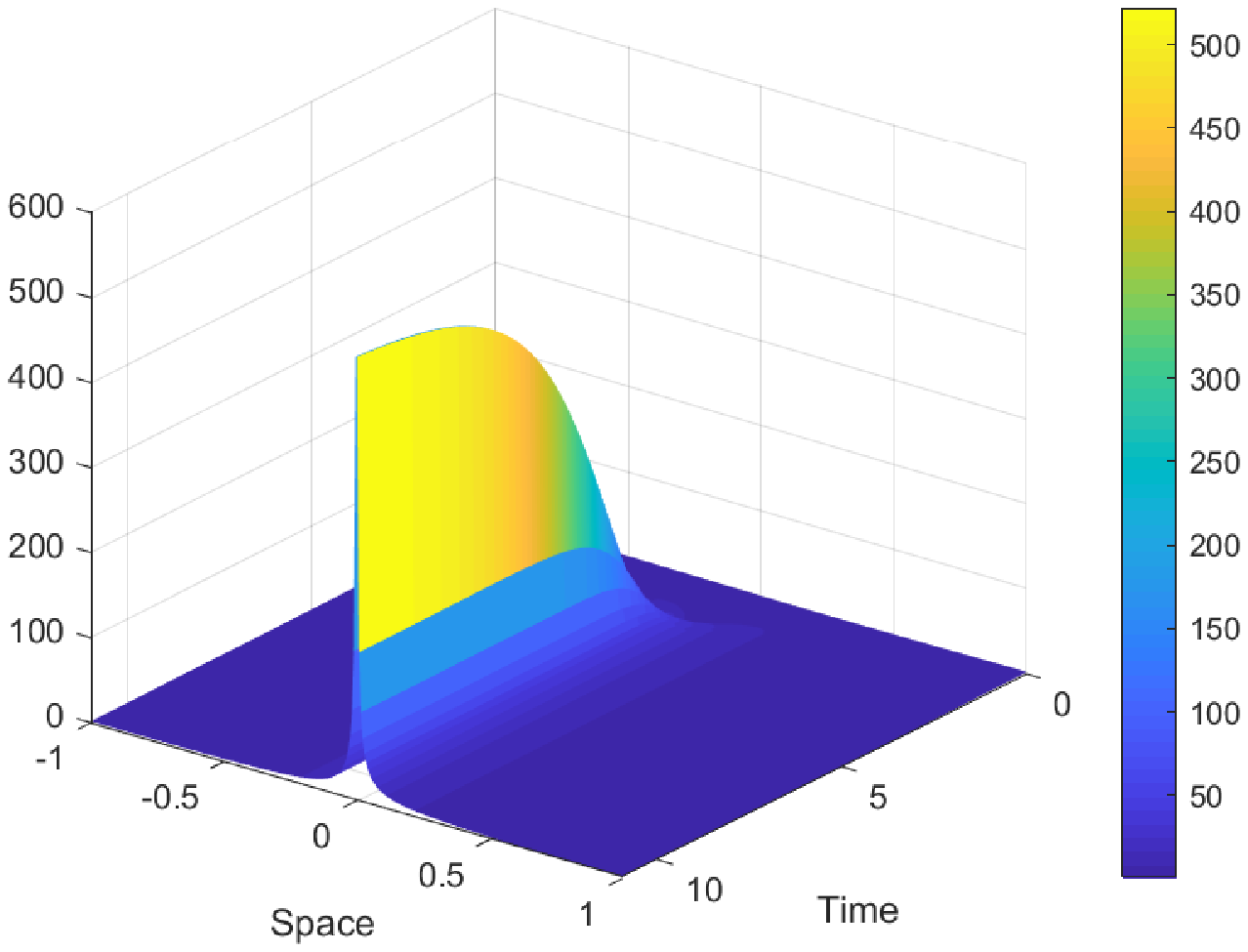}
    \end{subfigure}
     \begin{subfigure}[b]{0.49\textwidth}
        \centering
        \includegraphics[width=\textwidth]{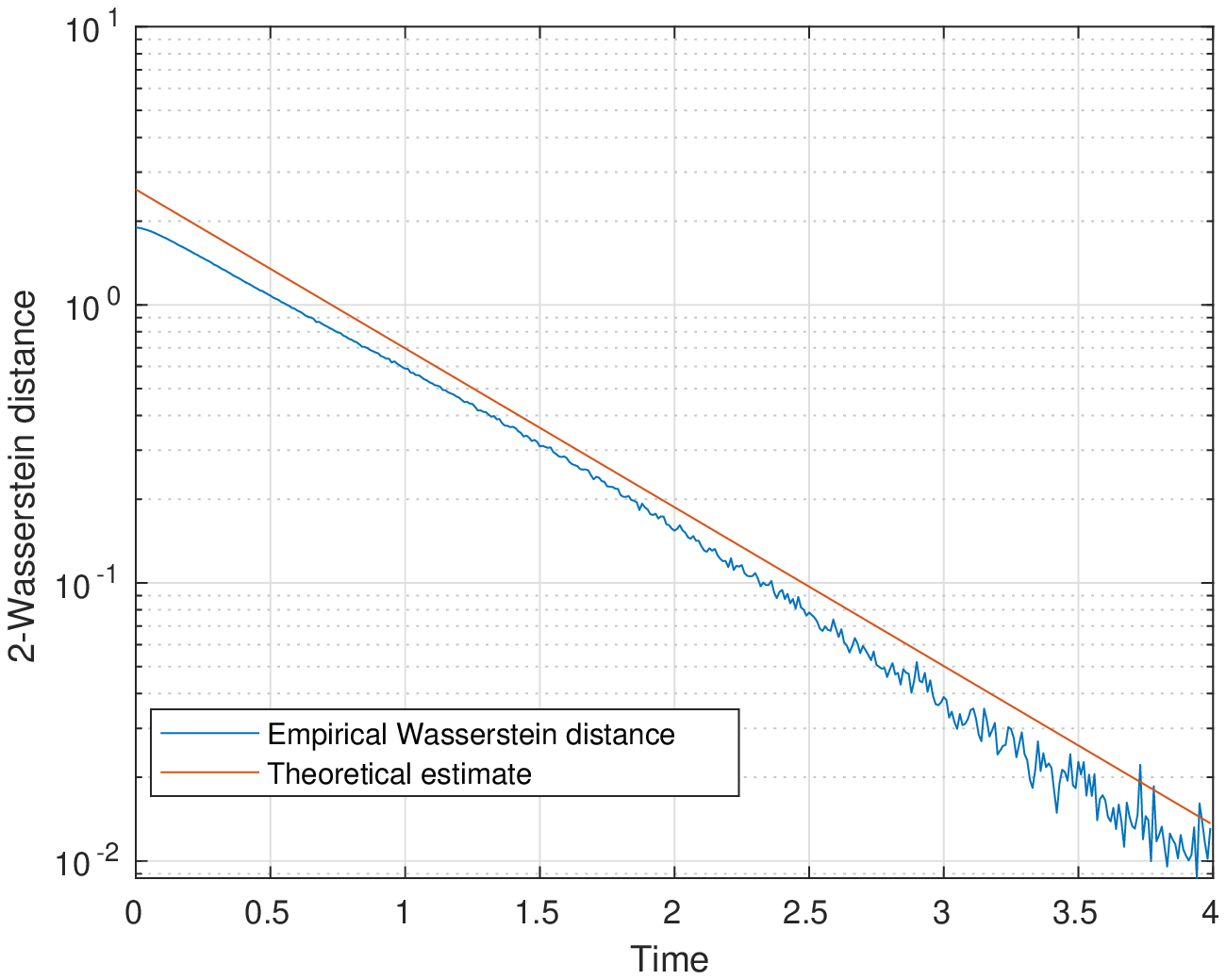}
    \end{subfigure}
    \caption{Solutions with fixed label and convergence in semilogarithmic scale towards stationary states given by $d = 1$ with quadratic potentials $f(x, s) = \frac s2|x - s|^2$ and $f(x, s) = sx^2$, $S = \{1, 2\}$ and $\mu = Bern(p)$ compared with the estimate provided by theorem \ref{thm_conv}.}
    \label{conv_ss}
\end{figure}
 \begin{proof}
 We can repeat the calculation of theorem \ref{thm_stab} by the introduction of a weight $\alpha > 0$, getting the estimate:
 \begin{align*} \frac{\dd}{\dd t} &\int_{(\R^d)^2} \int_{S^2} (|x_1 - x_2|^2 + \alpha|s_1-s_2|^2)\,\dd \Pi_t(x_1,x_2,s_1,s_2)   \\
&\leq \int_{(\R^d)^2} \int_{S^2} \Big(2(x_1 - x_2)\cdot \left(\nabla_{x_2}f(x_2, s_2)- \nabla_{x_2}f(x_2, s_1)\right) - 2m|x_1 - x_2|^2 - \alpha K|s_1 - s_2|^2\Big)\cdot \\ &\qquad \cdot \dd\Pi_t(x_1,x_2,s_1,s_2) + \alpha K \int_{S^2}|s_1 - s_2|^2\dd\Gamma(s_1,s_2)  \\
&\leq \int_{(\R^d)^2} \int_{S^2} \Big((\epsilon L-2m)|x_1 - x_2|^2 + \left(\frac L\epsilon - \alpha K\right)|s_1 - s_2|^2\Big)\dd\Pi_t(x, s) + \alpha K \int_{S^2}|s_1 - s_2|^2\dd\Gamma(s_1,s_2).
 \end{align*}
 Now, we fix $\epsilon > 0$ so that $\epsilon L - 2m < 0$ and $\alpha > 0$ so that $\frac L\epsilon - \alpha K < 0$ and we define $- c \coloneqq \max\{\epsilon L - 2m, \frac 1\alpha\left(\frac L\epsilon - \alpha K\right)\} < 0$; therefore we gain that:
  \begin{align*} \frac{\dd}{\dd t} &\int_{(\R^d)^2} \int_{S^2} (|x_1 - x_2|^2 + \alpha|s_1-s_2|^2)\,\dd \Pi_t(x_1,x_2,s_1,s_2)  \\
  &\leq -c\int_{(\R^d)^2} \int_{S^2} \left(|x_1 - x_2|^2 + \alpha|s_1 - s_2|^2\right)\dd\Pi_t(x_1, x_2, s_1, s_2) + \alpha KW_2^2(\mu_1, \mu_2).
  \end{align*}
  We can optimize the constant $c = \min\{2m - \epsilon L, K - \frac L{\alpha\epsilon}\}$ with $0 < \epsilon < \frac {2m}L$ and $\alpha > \frac L{\epsilon K}$, by choosing $\epsilon$ close enough to zero and $\alpha = \frac{n L}{\epsilon K}$ for $n > 1$ big enough. 
  We notice that $\min\{1, \alpha\}W_2^2(\rho_1(t), \rho_2(t)) \leq \int_{(\R^d)^2} \int_{S^2} (|x_1 - x_2|^2 + \alpha|s_1-s_2|^2)\,\dd \Pi_t(x_1,x_2,s_1,s_2)$ and $\int_{(\R^d)^2} \int_{S^2} (|x_1 - x_2|^2 + \alpha|s_1-s_2|^2)\,\dd \Pi_0(x_1,x_2,s_1,s_2) \leq \max\{1, \alpha\}\int_{(\R^d)^2} \int_{S^2} (|x_1 - x_2|^2 + |s_1-s_2|^2)\,\dd \Pi_0(x_1,x_2,s_1,s_2)$; hence, by applying again the Grönwall lemma, we get inequality \eqref{imp_ineq}. In particular, if in \eqref{imp_ineq} $\rho_{0,1}$ and $\rho_{0,2}$ are two stationary solutions of \eqref{fund_eq_meas} and $\mu_1 = \mu_2$, then we have that $\rho_{0,1} = \rho_{0,2}$; that is, if $f$ is strongly convex, the stationary state is unique. If $\mu_1 = \mu_2$ in \eqref{imp_ineq} and $\rho_0$ is any initial datum of $\rho$, we have \eqref{conv_stat}. One can easily verify by substitution in \eqref{fund_eq_meas} that this stationary state is equal to $\delta_{x^*}\otimes \mu$, if $x^*$ is the unique minimizer of $f$ for every $s \in S$. 
  \end{proof}
\section{Stationary solutions} \label{exis_ss}

\subsection{Existence of Stationary Solutions} 
In this section, we discuss the existence of a stationary state in the general setting for the equation:
$$\frac{\partial \rho}{\partial t} = \nabla \cdot(\rho \nabla f(x, s)) + K(\bar \rho \otimes \mu - \rho) ,$$ 
with $\rho_0 \in \CMcal{P}_2(\R^d \times S)$, which solves the equation:
\begin{equation}\label{init_eq_meas} \nabla \cdot(\rho \nabla g) - \rho = -\bar \rho \otimes \mu, \end{equation}
or, in case $\rho$ is a probability density function, the equivalent equation for functions:
\begin{equation}\label{init_eq} \nabla \cdot(\rho \nabla g) - \rho = -\int_S \rho(x, s')\dd \mu(s'), \end{equation}
where $g \coloneqq f/K$. In order to obtain the existence of stationary solutions (and prevent loss of mass at $|x|=\infty$) we need to introduce some confinement assumptions.
Hence, throughout this section we will assume that there exist $c > 0$ and $\bar R > 0$
such that
$$\nabla f \cdot x \geq c|x|^2, $$ for every $|x| > \bar R$. This will provide us a uniform bound on the second moment, but one could obtain results under weaker conditions as well. The confinement assumption is satisfied in particular by the strong convexity hypothesis of theorem \ref{thm_conv} with $y = 0$. The strategy is approximating the problem on a bounded domain and then trying to pass to the limit on the whole space. 

\subsubsection{Approximate problem on finite domains}
For a fixed $\epsilon > 0$, we consider the perturbed equation:
\begin{equation} \label{eq1}
\nabla \cdot(\rho \nabla g) + \epsilon\Delta\rho - \rho = -\int_S \rho(x, s')\dd \mu(s')
\end{equation}
on a bounded domain with no-flux boundary conditions. 
Since $v \coloneqq e^{-g/\epsilon}$ solves $\nabla \cdot(\epsilon \nabla v + v\nabla g) = 0$, we get the scaling $\rho \coloneqq e^{-g/\epsilon\,}w$. Then, $\rho \nabla g + \epsilon\nabla \rho = e^{-g/\epsilon\,}w\nabla g + \epsilon(-e^{-g/\epsilon\,}(\frac 1\epsilon)w\nabla g + e^{-g/\epsilon\,}\nabla w) = \epsilon e^{-g/\epsilon\,}\nabla w,$ that is, we have the equivalence $\nabla \cdot(\rho\nabla g + \epsilon \nabla \rho) = \nabla \cdot(\epsilon e^{-g/\epsilon\,}\nabla w)$, with which equation \eqref{eq1} reads:
\begin{equation}\label{eq2} - \nabla \cdot(\epsilon e^{-g/\epsilon\,}\nabla w) + e^{-g/\epsilon\,}w = \int_S e^{-g/\epsilon\,}w(x, s')\dd \mu(s'). \end{equation}
We fix $R > 0$ and, at first, we find a solution on the ball $B_R$ with center in zero and radius equal to $R$. For this purpose, we consider test functions $\phi \in L^2(S, H^1(B_R); \mu)$ and we write equation \eqref{eq2} in weak form as a Neumann problem with zero boundary condition:
$$\int_S\int_{B_R}\epsilon  e^{-g/\epsilon\,}\nabla w\cdot \nabla \phi +  e^{-g/\epsilon\,}w\phi\,\dd x\dd \mu(s) = \int_S\int_{B_R}\left(\int_S  e^{-g/\epsilon\,}w(x,s')\dd \mu(s')\right)\phi\dd x \dd \mu(s), $$
$\forall \phi \in  L^2(S, H^1(B_R); \mu)$. Note that the homogeneous Neumann boundary condition for $w$ corresponds to a no-flux boundary condition for the original variable $\rho$, which we want to use in order to maintain the mean-value, i.e. to obtain a probability density. This weak formulation can be interpreted as an eigenvalue problem for the function $w$: given a function $h \in L^2(S, L^2(B_R); \mu)$ we associate the function $A(h) \in L^2(S, H^1(B_R); \mu)$ solution of the variational problem:
\begin{equation}\label{eq3} \int_S\int_{B_R}\epsilon  e^{-g/\epsilon\,}\nabla A(h)\cdot \nabla \phi +  e^{-g/\epsilon\,}A(h)\phi\,\dd x\dd \mu(s) = \int_S\int_{B_R}\left(\int_S  e^{-g/\epsilon\,}h(x,s')\dd \mu(s')\right)\phi\dd x \dd \mu(s), \end{equation}
$\forall \phi \in  L^2(S, H^1(B_R); \mu)$. The linear operator $A: L^2(S, L^2(B_R); \mu) \to L^2(S, L^2(B_R); \mu)$ is well-defined and continuous thanks to the Lax-Milgram theorem: indeed the bilinear form is continuous and coercive, because the function $g$ is continuous on the whole space and $B_R \times S$ is bounded, so there exist $a, b > 0$, such that $a \leq e^{-g/\epsilon} \leq b$. So $\forall h \in  L^2(S, L^2(B_R); \mu), \exists! A(h) \in L^2(S, H^1(B_R); \mu) \subseteq L^2(S, L^2(B_R); \mu)$ solution of \eqref{eq3}. A solution of \eqref{eq2} is an eigenvector of the operator $A$ associated to the eigenvalue $1$. The operator $A$ is positive (i.e. if $h \geq 0,\, Ah \geq 0$ a.e.) and the proof of this is likewise to the one of the maximum principle. Moreover $A$ is a compact operator: let us take a bounded sequence $(h_n)_n \subseteq  L^2(S, L^2(B_R); \mu)$ and let $(w_n)_n$ be the images of $(h_n)_n$ through $A$, that is $w_n \coloneqq A(h_n)$. Thanks to this fact, we get the following easy inequality:
\begin{equation}\label{ineq1} \| w_n\|_{ L^2(S, H^1(B_R); \mu)} \leq C\left\|\int_S h_n(\cdot, s) \dd \mu(s)\right\|_{L^2(S, L^2(B_R); \mu)} \leq C\|h_n\|_{ L^2(S, L^2(B_R); \mu)} \leq C,\end{equation}
where $C$ is independent of $n$. Here, $C$ can indicate different constants. Thanks to the elliptic regularity of problem \eqref{eq3} and Nirenberg's method (cf. theorem $1$ of section $6.3$ in \cite{EV10}), we get also an estimate on the second derivatives which makes $w_n$ a $H^2$-functions in the variable $x$:
\begin{equation}\label{ineq2} \|D^2w_n\|_{ L^2(S, L^2(B_R); \mu)} \leq C'\left(\|h_n\|_{ L^2(S, L^2(B_R); \mu)} + \|w_n\|_{ L^2(S, H^1(B_R); \mu)}\right) \leq C', \end{equation}
with the same caveat as before. Another crucial inequality is concerning the translation operator in a generic Sobolev space, that is:
\begin{equation}\label{ineq3} \|\tau_hu - u\|_{L^2(\Omega)} \leq |h|\|\nabla u\|_{L^2(\Omega)},\,\,\forall u \in H^1(\Omega), \end{equation}
where $\tau_h$ is the translation of $u$ in the direction of the vector $h$. We fix $K \subset\subset B_R \times S$, $h\in \R^d$ and $t \in S$ such that $|(h, t)| < dist(K, \partial(B_R \times S))$. Whenever it makes sense, $w_n(\cdot - h, \cdot - t)$ is solution of the equation:
\begin{equation*}-\nabla \cdot(\epsilon e^{-g(x-h, s-t)/\epsilon\,}\nabla w_n(x-h, s-t)) +  e^{-g(x - h, s - t)/\epsilon\,}w_n(x - h, s-t) = \int_S  e^{-g(x - h, s')/\epsilon\,}h_n(x - h, s')\dd \mu(s'), \end{equation*}
for adequates $(x, s) \in K$. We approximate the integration on $K$ via cutoff functions $\psi_{K_m}$, that is regularized versions of the characteristic function of the subset $K$, by choosing $\phi = w_n\psi_{K_m}$. Hence $\nabla(w_n\psi_{K_m}) = \nabla(w_n)\psi_{K_m} + w_n\nabla\psi_{K_m}$, and we can pass to the limit exploiting the uniform integrability of $\psi_{K_m}$ and the Lebesgue theorem. We can control the condition on the translations exploiting \eqref{eq3}:
\begin{align*}&\int_K |w_n(x, s) - w_n(x - h, s-t)|^2\dd x \dd \mu(s) = \\ &\int_K w_n^2(x, s) + w_n^2(x - h, s-t) - 2w_n(x, s)w_n(x - h, s - t)\dd x\dd \mu(s) \leq \\ 
& C\int_K\Big( -\left|\nabla w_n\right|^2 + \left(\int_S h_n(x, s')\dd \mu(s')\right)w_n - \left|\nabla w_n(x - h, s- t)\right|^2 + \left(\int_S h_n(x - h, s')\dd \mu(s')\right)\cdot \\ &\cdot w_n(x - h, s - t) + 2\nabla w_n\cdot\nabla w_n(x - h, s - t) -  \left(\int_S h_n(x - h, s')\dd \mu(s')\right)w_n\, - \\ &-\left(\int_S h_n(x, s')\dd \mu(s')\right)w_n(x - h, s - t)\Big)\dd x \dd \mu(s) \leq C\int_K \Big(2|\nabla w_n||\nabla w_n - \nabla w_n(x - h, s-t)| + \\ &\left|\int_S h_n(x - h, s')\dd \mu(s')\right||w_n - w_n(x - h, s - t)| + \left|\int_S h_n(x, s')\dd \mu(s')\right||w_n - w_n(x - h, s - t)|\Big)\dd x\dd \mu(s) 
\\[2mm] &\leq C\big(2\|\nabla w_n\|_{L^2(K)}\|\tau_{(h, t)}\nabla w_n - \nabla w_n\|_{L^2(K)} + 2\|h_n\|_{L^2(K)}\| \tau_{(h, t)}\nabla w_n - \nabla w_n\|_{L^2(K)}\big) \leq C'|(h, t)|,
\end{align*}
using \eqref{ineq1}, \eqref{ineq2}, \eqref{ineq3} and the Cauchy-Schwarz inequality, with $C$ constant depending on $g$ and $\epsilon$. After this calculation, we have the translation equicontinuity property and so we are allowed to apply the Fréchet-Kolmogorov theorem (theorem $4.26$ in \cite{BR11}) on the sequence $(w_n)_n$ in $L^2(S, L^2(B_R); \mu)$, which provides us the compactness of the operator $A$. In this case, the equitightness hypothesis of the theorem is not required, because the set $B_R \times S$ is bounded. \\
One can notice that the original problem for $\rho$ \eqref{eq1} is equivalent to solve the eigenvalue problem, not for the operator $A$, but for the operator $AB$, where $B: L^2(S, L^2(B_R); \mu) \to L^2(S, L^2(B_R); \mu)$ is such that $B(\rho) \coloneqq e^{\,g/\epsilon\,}\rho = w$. With this definition, $B$ is a linear bounded operator and therefore $L \coloneqq AB$ is still a compact (and positive) operator. We can apply the Krein-Rutman theorem (cf. theorem $1.1$ in \cite{DU06}) on the operator $L$, getting that the spectral radius of $L$, $r(L)$, is an eigenvalue of $L$ with a nonnegative eigenvector $\rho \in L^2(S, H^1(B_R); \mu) $, since the range of $A$ is in $H^1(B_R)$, which solves the weak formulation:
\begin{equation*}r(L)\int_S\int_{B_R}\rho \nabla g \cdot \nabla \phi + \epsilon \nabla\rho \cdot \nabla\phi + \rho\phi\, \dd x\dd \mu(s) = \int_S\int_{B_R}\left(\int_S \rho(x, s')\,\dd \mu(s')\right)\phi\,\dd x\dd\mu(s), \end{equation*}
$\forall \phi \in  L^2(S, H^1(B_R); \mu)$. To have an estimate on the value of $r(L)$, we use the fact that the adjoint operator $L^*$ of $L$ has the same real eigenvalues. Applying the adjoint operator of L is equivalent to find, given a function $\omega$, a solution $\phi$ such that:
$$(\epsilon\Delta\phi - \nabla\phi\cdot\nabla g - \phi) = - \int_S\omega(x, s')\,\dd\mu(s'),$$
and, if $\omega \equiv \phi \equiv constant > 0$, we have that $\lambda = 1$ is an eigenvalue of $L^*$, hence of $L$. If, \emph{ad absurdum}, $r(L) > 1$ and $\rho$ were its nonnegative eigenvector, with $\phi \equiv 1$:
\begin{equation}\begin{gathered}\label{eq4} \int_S\int_{B_R}\rho(x, s)\,\dd x\dd\mu(s) = \int_S\int_{B_R}\phi\rho(x, s)\,\dd x\dd\mu(s) = (\phi, \rho)_{L^2(S, L^2(B_R); \mu)} = \\ = (L^*\phi, \rho)_{L^2(S, L^2(B_R); \mu)} = (\phi, L\rho)_{L^2(S, L^2(B_R); \mu)}. \end{gathered} \end{equation}
Moreover:
\begin{equation}\begin{gathered}\label{eq5} r(L)\int_S\int_{B_R}\rho(x, s)\,\dd x\dd\mu(s) = \int_S\int_{B_R}\phi(r(L)\rho)(x, s)\,\dd x\dd\mu(s) = (\phi, r(L)\rho)_{L^2(S, L^2(B_R); \mu)} \\ =
(\phi, L\rho)_{L^2(S, L^2(B_R); \mu)}, \end{gathered} \end{equation}
and putting together \eqref{eq4}, \eqref{eq5}, we would have that, if $r(L) > 1$, $\rho \equiv 0$. So $r(L) = 1$ with $\rho \geq 0$ and the weak formulation reads:
\begin{equation}\int_S\int_{B_R}\rho \nabla g \cdot \nabla \phi + \epsilon \nabla\rho \cdot \nabla\phi + \rho\phi\, \dd x\dd \mu(s) = \int_S\int_{B_R}\left(\int_S \rho(x, s')\,\dd \mu(s')\right)\phi\,\dd x\dd\mu(s), \label{eq6} \end{equation}
with $\rho \in L^2(S, H^1(B_R); \mu)$, $\forall \phi \in  L^2(S, H^1(B_R); \mu)$. \\
\subsubsection{Vanishing diffusivity limit and problem on the whole space}
The next step is eliminating the perturbation and with $\rho_\epsilon$ we shall indicate the solution of \eqref{eq6} on a ball $B_R$, with radius $R$ fixed, corresponding to the fixed value of $\epsilon$. We choose $R > 0$ such that $R > \bar R$, where the value $\bar R > 0$ is given by the hypothesis considered at the beginning of this section. Without any change of notation, we rescale $\rho_\epsilon$, that is, we consider $\|\rho_\epsilon\|_{L^1(S, L^1(B_R); \mu)} = 1$ and now $\rho_\epsilon$ is a probability density function on $B_R \times S$. We would like to estimate the term $\epsilon \nabla \rho_\epsilon \cdot \nabla\phi$ when $\epsilon \rightarrow 0$ exploiting weak convergence properties, for whom boundedness of the second moments is enough. To this purpose, we use equation \eqref{eq6} with test function $\phi(x, s) = x_i^2$ getting
$\int_S\int_{B_R}\frac{\partial g}{\partial x_i}x_i\rho_\epsilon\,\dd x\dd\mu(s) + \epsilon\int_S\int_{B_R}x_i\frac{\partial\rho_\epsilon}{\partial x_i}\,\dd x\dd\mu(s) = 0$. On $S$ there is no problem of boundedness of moments, since $S$ is bounded. By summing on $i$ these equations, we gain:
$$\int_S\int_{B_R}(\nabla g \cdot x)\rho_\epsilon\,\dd x\dd\mu(s) + \epsilon\int_S\int_{B_R}\nabla\rho_\epsilon \cdot x\,\dd x\dd\mu(s) = 0.$$
The second term is below-bounded, indeed by integration by parts:
$$\int_S\int_{B_R}\frac{\partial \rho_\epsilon}{\partial x_i}x_i\,\dd x\dd\mu(s) = -\int_S\int_{B_R}\rho_\epsilon\,\dd x\dd\mu(s) + \int_S\int_{\partial B_R}\rho_\epsilon x_i \,\underline n_i\,\dd x\dd\mu(s),$$
and we sum harnessing the boundedness in $L^1$ and the fact that, on $\partial B_R$, $x \cdot \underline n = |x|^2/R = R$:
$$\int_S\int_{B_R}\nabla\rho_\epsilon \cdot x\,\dd x\dd\mu(s) = -d + \int_S\int_{\partial B_R}\rho_\epsilon x\cdot \underline n\,\dd x\dd\mu(s)= - d + R\int_S\int_{\partial B_R}\rho_\epsilon\,\dd x\dd\mu(s) \geq - d.$$
\begin{lemma}\label{mom_lem}Under the given hypothesis, the second moments of the set $(\rho_\epsilon)_{0 < \epsilon \leq 1}$ are uniformly bounded and therefore there exists a sequence weakly convergent to a probability measure $\rho_R$ on $B_R \times S$ when $\epsilon \to 0$. Moreover, $m_2(\rho_R) \coloneqq \int_S\int_{B_n}|x|^2\dd\rho_R(x, s) \leq C$, with $C > 0$ independent of $R$. \end{lemma}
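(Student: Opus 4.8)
The plan is to combine the confinement assumption with the identity established immediately before the statement,
$$\int_S\int_{B_R}(\nabla g\cdot x)\,\rho_\epsilon\,\dd x\dd\mu(s)+\epsilon\int_S\int_{B_R}\nabla\rho_\epsilon\cdot x\,\dd x\dd\mu(s)=0,$$
together with the lower bound $\int_S\int_{B_R}\nabla\rho_\epsilon\cdot x\,\dd x\dd\mu(s)\ge-d$ obtained there by integration by parts (the boundary term on $\partial B_R$ being nonnegative since $\rho_\epsilon\ge0$ and $x\cdot\underline n=R>0$). These together give $\int_S\int_{B_R}(\nabla g\cdot x)\,\rho_\epsilon\,\dd x\dd\mu(s)\le\epsilon d\le d$ for $0<\epsilon\le1$. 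Recalling $g=f/K$, the confinement hypothesis reads $\nabla g\cdot x\ge\frac cK|x|^2$ for $|x|>\bar R$, so the natural step is to split $B_R$ into $\{|x|\le\bar R\}$ and $\{|x|>\bar R\}$.

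On the inner region $\nabla g\cdot x\ge-M$ with $M:=\bar R\,\sup_{\bar B_{\bar R}\times\bar S}|\nabla g|<\infty$ by continuity of $\nabla f$ on the compact set $\bar B_{\bar R}\times\bar S$, and since $\rho_\epsilon$ is a probability measure this part contributes at least $-M$; on the outer region one uses $\nabla g\cdot x\ge\frac cK|x|^2$. Plugging into the bound above yields $\frac cK\int_S\int_{B_R\cap\{|x|>\bar R\}}|x|^2\rho_\epsilon\,\dd x\dd\mu(s)\le d+M$, and adding the trivial estimate $\bar R^2$ for the inner region gives $m_2(\rho_\epsilon)\le\bar R^2+\frac Kc(d+M)=:C$. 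The essential point is that $\bar R$, $c$, $K$, $d$ and $M$ are all independent of $\epsilon\in(0,1]$ and of $R>\bar R$ — $R$ entered only through the favourably signed boundary term, which was discarded — so $C$ is uniform in both parameters, which is precisely the $R$-independence claimed. Since $S$ is bounded, $\int|s|^2\,\dd\rho_\epsilon\le(\sup_{s\in S}|s|)^2$, so the full second moment is uniformly bounded as well.

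For the compactness statement: $\bar B_R\times\bar S$ is a compact metric space, hence the family $(\rho_\epsilon)_{0<\epsilon\le1}\subseteq\CMcal P(\bar B_R\times\bar S)$ is automatically tight, and Prokhorov's theorem yields a sequence $\epsilon_k\to0$ with $\rho_{\epsilon_k}\rightharpoonup\rho_R$ weakly; $\rho_R$ is a probability measure because mass is preserved under weak convergence on a compact space. Finally $x\mapsto|x|^2$ is continuous and bounded on $\bar B_R$, so $m_2(\rho_R)=\lim_k m_2(\rho_{\epsilon_k})\le C$ (weak lower semicontinuity of $m_2$ would also suffice). I do not anticipate a genuine obstacle; the only point needing care is verifying that $M$ really does not depend on $R$, which holds because $\bar R$ is fixed once and for all by the confinement hypothesis. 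Identifying the equation satisfied by the limit $\rho_R$ by passing to the limit $\epsilon\to0$ in \eqref{eq6} is deferred, as the lemma asserts only the moment bound and the existence of the weak limit.
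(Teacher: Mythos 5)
Your proof is correct and follows essentially the same route as the paper's: both use the identity $\int_S\int_{B_R}(\nabla g\cdot x)\rho_\epsilon\,\dd x\,\dd\mu(s)=-\epsilon\int_S\int_{B_R}\nabla\rho_\epsilon\cdot x\,\dd x\,\dd\mu(s)\leq \epsilon d$, split $B_R$ at radius $\bar R$, bound the inner contribution by continuity of $\nabla g$ on the compact set $\bar B_{\bar R}\times\bar S$, apply the confinement condition outside, and conclude tightness, Prokhorov, and passage of the second moment to the limit via boundedness of $|x|^2$ on $B_R$. Your only (harmless) deviation is tracking the factor $K$ in $g=f/K$ explicitly, which the paper's proof glosses over by stating the confinement condition directly for $g$; either way the resulting constant is independent of $\epsilon$ and $R$.
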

\begin{proof} If $m_2(\rho_\epsilon) \coloneqq \int_S\int_{B_n}|x|^2\rho_\epsilon(x, s)\,\dd x\dd\mu(s)$ is the second moment of $\rho_\epsilon$, we remember the assumption $\nabla g \cdot x \geq c|x|^2$ when $|x| > \bar R$ and hence, since $R > \bar R$:
\begin{align*} -cm_2(\rho_\epsilon) &= \int_S\int_{B_R}\rho_\epsilon(x \cdot \nabla g - c|x|^2)\,\dd x\dd \mu(s) + \epsilon\int_S\int_{B_R}\nabla\rho_\epsilon \cdot x\,\dd x\dd\mu(s) \geq \\ &\geq \int_S\int_{B_{\overline R}}\rho_\epsilon(x \cdot \nabla g - c|x|^2)\,\dd x\dd \mu(s) - d \,\geq\, \\ &\geq -(\bar RG_{\bar R} + c{\overline R}^2)\int_S\int_{B_{\bar R}}\rho_\epsilon\,\dd x\dd\mu(s) -d \geq -(\bar RG_{\bar R} + c{\bar R}^2 + d),
\end{align*}
since on $B_{\bar R}$, $|x \cdot \nabla g| \leq \bar RG_{\overline R}$, where $G_{\bar R} \coloneqq \sup_{(x, s) \in B_{\bar R} \times S}|\nabla g|$. Then, $m_2(\rho_\epsilon) \leq  \frac{\bar RG_{\bar R} + c{\bar R}^2 + d}c$, so the set $(\rho_\epsilon)_{0 < \epsilon \leq 1}$ is tight. Therefore by Prokhorov's theorem, up to a subsequence, $\exists \rho_R \in \CMcal{P}(B_R \times S)$ such that $\rho_\epsilon \overset{w}{\rightharpoonup} \rho_R$ when $\epsilon \to 0$. Moreover, since the domain is bounded and $x^2$ is bounded and continuous, for the second moment of $\rho_n$ the same estimate holds.
\end{proof}
In \eqref{eq6}, we set $\phi \in C_b(S, \D(B_R))$, where $C_b(S, \D(B_R))$ is the space of bounded continuous functions on $S$ and $C^\infty$ with compact support on $B_R$ and through regularity of $\phi$, we obtain:
\begin{equation}\label{eq_diff} \int_S\int_{B_R}\rho_\epsilon \nabla g \cdot \nabla \phi - \epsilon\, \rho_\epsilon \cdot \Delta\phi + \rho_\epsilon\phi\, \dd x\dd \mu(s) = \int_S\int_{B_R}\left(\int_S \rho_\epsilon(x, s')\,\dd \mu(s')\right)\phi\,\dd x\dd\mu(s), \end{equation}
but when $\epsilon \to 0$, $\rho_\epsilon \overset{w}{\rightharpoonup} \rho_R$
and then for every $\phi \in C_b(S, \D(B_R))$, the sequence $\int_S\int_{B_R} \rho_\epsilon \cdot \Delta \phi\,\dd x\dd\mu(s)$ is bounded, i.e. $\rho_R$ solves the relation:
\begin{equation}\int_S\int_{B_R}(\nabla g \cdot \nabla \phi + \phi)\, \dd \rho_R(x, s) = \int_S\int_{B_R}\phi\,\dd \bar \rho_R(x)\dd\mu(s), \end{equation}
where $\bar \rho_R$ is the marginal probability measure of $\rho_R$ on $B_R$, for every $\phi \in C_b(S, \D(B_R))$. Moreover, for every $R > \bar R$, $m_2(\rho_R) \leq  \frac{\bar RG_{\bar R} + c{\bar R}^2 + d}c$. \\
The last unsolved issue is the extension of $\rho_R$ to the whole space $\R^d$ and we define the sequence $(\rho_n)_{n \in \N}$, where $\rho_n$ is a solution obtained in the previous step with $R = n$ on the ball $B_n$ with radius $n$. We extend them trivially to a measure on the whole space $\tilde \rho_n$, such that for every set $A \in \mathcal{B}(\R^d \times S)$, $\tilde \rho_n(A) \coloneqq \rho_n(A \cap (B_n \times S))$. Since we have a uniform bound on the second moments of the sequence $(\tilde \rho_n)_n$, this sequence is tight and, at least for a subsequence, we get that $\exists \rho \in \CMcal{P}_2(\R^d \times S)$ such that $\tilde \rho_n \overset{w}{\rightharpoonup} \rho$. We can summarize these results in a unique final theorem, exploiting the weak convergence and remembering that $g = f/K$.
\begin{thm}[Existence of stationary states]\label{ex_ss} There exists a probability measure $\rho \in \CMcal P_2(\R^d \times S)$ distributional solution of the equation \eqref{init_eq_meas}. More precisely, the measure $\rho$ satisfies:
\begin{equation}\label{fin_eq} \int_S\int_{\R^d}(\nabla f \cdot \nabla \phi + K\phi)\,\dd \rho(x, s) = K\int_S\int_{\R^d}\phi\, \dd\bar \rho(x)\dd\mu(s), \end{equation}
for every $\phi \in C_b(S, \D(\R^d))$, where $C_b(S, \D(\R^d))$ is the space of bounded continuous functions on $S$ and $C^\infty$ with compact support on $\R^d$ and $\bar \rho$ is the marginal probability measure of $\rho$ on $\R^d$. \end{thm}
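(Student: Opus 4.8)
The plan is to obtain the stationary measure by passing to the limit $n\to\infty$ in the weak formulation that each $\rho_n$ (the solution produced in the previous step on the ball $B_n$, trivially extended to $\tilde\rho_n\in\CMcal{P}(\R^d\times S)$) satisfies, namely
\begin{equation*}
\int_S\int_{B_n}(\nabla g\cdot\nabla\phi + \phi)\,\dd\rho_n(x,s) = \int_S\int_{B_n}\phi\,\dd\bar\rho_n(x)\dd\mu(s)\qquad\forall\,\phi\in C_b(S,\D(B_n)).
\end{equation*}
By Lemma \ref{mom_lem} the second moments $m_2(\tilde\rho_n)$ are bounded uniformly in $n$, so the family $(\tilde\rho_n)_n$ is tight; Prokhorov's theorem provides a subsequence converging narrowly to some $\rho$. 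Testing against the constant function $1$ shows $\rho(\R^d\times S)=1$, and lower semicontinuity of $(x,s)\mapsto|x|^2$ under narrow convergence gives $m_2(\rho)\le\liminf_n m_2(\tilde\rho_n)<\infty$, so $\rho\in\CMcal{P}_2(\R^d\times S)$.

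Now fix $\phi\in C_b(S,\D(\R^d))$ and pick $N$ with $\supp_x\phi\subseteq B_N$; for $n\ge N$ the identity above holds with this $\phi$, and, recalling $g=f/K$, multiplying by $K$ it becomes
\begin{equation*}
\int_S\int_{\R^d}(\nabla f\cdot\nabla\phi + K\phi)\,\dd\tilde\rho_n(x,s) = K\int_S\int_{\R^d}\phi\,\dd\bar{\tilde\rho}_n(x)\dd\mu(s).
\end{equation*}
On the left, the integrand is continuous on $\R^d\times S$ (since $\nabla f\in C^0$ and $\phi\in C_b(S,\D(\R^d))$) and supported in $\bar B_N\times S$, hence bounded, so narrow convergence of $\tilde\rho_n$ passes to the limit. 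On the right, $\tilde\rho_n\rightharpoonup\rho$ forces the marginals $\bar{\tilde\rho}_n\rightharpoonup\bar\rho$ on $\R^d$ (test against functions of $x$ alone); thus for each fixed $s$ the inner integral $\int_{\R^d}\phi(x,s)\,\dd\bar{\tilde\rho}_n(x)$ converges to $\int_{\R^d}\phi(x,s)\,\dd\bar\rho(x)$ while staying bounded by $\|\phi\|_{L^\infty}$, and since $\mu$ is a finite measure, dominated convergence yields the convergence of the outer $s$-integral. Passing to the limit therefore gives \eqref{fin_eq}, which after integration by parts is exactly the distributional formulation of \eqref{init_eq_meas}.

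I expect the only genuinely delicate point to be the coexistence of the (a priori unbounded) drift $\nabla f$ with the limiting procedure. This is handled by insisting that the test functions be compactly supported in $x$, so that $\nabla f$ always enters multiplied by a compactly supported factor and every integrand in sight is bounded and continuous; the second ingredient making the argument work is the bound of Lemma \ref{mom_lem}, which is uniform in the radius $R=n$ and hence prevents loss of mass at infinity when passing to the limit. Everything else — convergence of marginals, preservation of total mass, lower semicontinuity of the second moment — is routine once narrow compactness is in place.
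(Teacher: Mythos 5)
Your proposal is correct and follows exactly the route the paper takes: extend the ball solutions $\rho_n$ trivially, use the uniform second-moment bound of Lemma \ref{mom_lem} for tightness, extract a narrowly convergent subsequence via Prokhorov, and pass to the limit in the weak formulation with test functions compactly supported in $x$. In fact you spell out the limit passage (convergence of the marginals, dominated convergence in $s$, preservation of mass) in more detail than the paper, which only remarks that the theorem follows ``exploiting the weak convergence and remembering that $g = f/K$.''
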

As already highlighted in section \ref{scms}, when $f(\cdot,s)$ has the same minimizer $x^*$ for all $s$, then the product measure $\rho = \delta_{x^*} \otimes \mu$, where $\delta_{x^*}$ is the Dirac measure centered in the point $x^*$, is a solution of \eqref{fin_eq}, since $f$ is a regular function when $s$ is fixed, and it is unique. This does not hold in general, because this reasoning can be iterated in case $f$ has $N$ minimizers, $x_1^*, \dots, x_N^*$, for all $s$, then $\delta_{x_i^*} \otimes \mu$, for every $i = 1, \dots, N$ are solutions of \eqref{fin_eq} and hence every convex combination of $\delta_{x_i^*} \otimes \mu$. 
Nevertheless, we can write the following coupling equation:
\begin{equation} \label{coup_ss}
    \partial_t \Pi = \nabla_x\cdot(\Pi\nabla_s f(x, s_1)) + \nabla_y\cdot(\Pi\nabla_y f(y, s_2)) + K\int_{S^2}(\Gamma(s_1, s_2)\Pi(x, y, s_1', s_2') - \Pi)\dd s_1'\dd s_2'
\end{equation}
with $\Gamma = \mu\delta_{s_1 - s_2}$, which is a coupling between two stationary solutions $\rho_{\infty, 1}$, $\rho_{\infty, 2}$ of \eqref{fund_eq_meas} if the initial datum $\Pi_0$ is a coupling between such stationary states. Such $\Pi$ is well-defined thanks to theorem \ref{ex_thm}. We multiply equation \eqref{coup_ss} by $|s_1 - s_2|^2$ and integrate, obtaining that 
\begin{align*}
W_2(\pi_S\#\rho_{\infty, 1}, \pi_S\#\rho_{\infty, 2})^2 \leq \int_{\R^{2d}\times S^2}|s_1 - s_2|^2&\dd \Pi_t(x, y, s_1, s_2) = \\ &\left(\int_{\R^{2d}\times S^2}|s_1 - s_2|^2\dd \Pi_0(x, y, s_1, s_2)\right)e^{-Kt},
\end{align*}
where $\pi_S$ is the projection on $S$. But this is impossible, unless $\int_{\R^{2d}\times S^2}|s_1 - s_2|^2\dd \Pi_0(x, y, s_1, s_2) = 0$ and then $W_2(\pi_S\#\rho_{\infty, 1}, \pi_S\#\rho_{\infty, 2}) = 0$. This means that two different stationary states have the same marginal on $S$.
\\
We underline that one can obtain a result of existence of stationary states also for the equation with diffusion, that is equation \eqref{eq_diff} with $\epsilon > 0$, exploiting the same argument and lemma \ref{mom_lem}.

\subsection{Support of stationary solutions}

In this subsection, we would like to discuss the support of the stationary solutions of \eqref{fund_eq_meas}. Indeed,
Dirac deltas are not the only possible stationary states: for instance there are "diffused" stationary states given by $d = 1$, $S = \{1, 2\}$, $\mu = Bern(0.5)$ and $f(x, s) = \frac 12s|x - s|^2$. In this case a stationary solution is $\rho(x, 2) = c_2(2 - x)^{\frac{K - 4}4}(x - 1)^{\frac K2}\mathbbm{1}\{1 < x < 2\}$, $\rho(x, 1) = c_1\frac{2 - x}{x - 1}\rho(x, 2)$ 
where $c_1, c_2$ are normalization constants.
\begin{lemma} \label{Lemma:stationarysupport}
Let $\psi = \psi(x), \varphi = \varphi(x) \geq 0$ be such that $\nabla f(x, s) \cdot \nabla \psi(x) \geq \varphi(x)$. Then, if $\bar \rho$ is the marginal of $\rho$ on $\R^d$, $Supp(\bar \rho) \subseteq \overline{\{\varphi = 0\}}$.
\end{lemma}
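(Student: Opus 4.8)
The plan is to use $\psi$ (suitably truncated) as a test function in the stationary weak formulation \eqref{fin_eq} and to exploit a cancellation. Were $\psi$ admissible, taking $\phi(x,s)=\psi(x)$ in \eqref{fin_eq} would make the two zeroth-order terms $K\int_S\int_{\R^d}\psi\,\dd\rho$ and $K\int_S\int_{\R^d}\psi\,\dd\bar\rho(x)\,\dd\mu(s)$ coincide, since $\psi$ depends on $x$ only, $\mu$ is a probability measure and $\bar\rho$ is the $x$-marginal of $\rho$, leaving
\begin{equation*}\int_S\int_{\R^d}\nabla f(x,s)\cdot\nabla\psi(x)\,\dd\rho(x,s)=0.\end{equation*}
Since by hypothesis $0\le\varphi(x)\le\nabla f(x,s)\cdot\nabla\psi(x)$, this forces $\int_S\int_{\R^d}\varphi\,\dd\rho=0$, hence $\varphi=0$ $\rho$-a.e.; as $\varphi$ is a function of $x$ alone, $\bar\rho(\{\varphi>0\})=\rho(\{\varphi>0\}\times S)=0$, so $\overline{\{\varphi=0\}}$ is a closed set of full $\bar\rho$-measure and therefore contains $\supp(\bar\rho)$, which is the claim.

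Because $\psi$ need not be bounded or compactly supported, I would justify this by truncation. Set $\phi_R(x,s):=\psi(x)\chi_R(x)$ with $\chi_R(x):=\chi(x/R)$, where $\chi$ is a fixed radial cutoff, $0\le\chi\le1$, $\chi\equiv1$ on $B_1$, $\supp\chi\subseteq B_2$; then $\chi_R\to1$ pointwise and $|\nabla\chi_R|\le C/R$ with support in the annulus $\{R\le|x|\le2R\}$. Inserting $\phi_R$ in \eqref{fin_eq}, the zeroth-order terms still cancel since $\phi_R$ depends on $x$ only, and $\nabla\phi_R=\chi_R\nabla\psi+\psi\nabla\chi_R$ yields
\begin{equation*}\int_S\int_{\R^d}\chi_R\,(\nabla f\cdot\nabla\psi)\,\dd\rho=-\int_S\int_{\R^d}\psi\,(\nabla f\cdot\nabla\chi_R)\,\dd\rho,\end{equation*}
where I am taking $\psi\in C^1$ and (to run the next step) with at most quadratic growth, which is what is needed in the applications; the bounded- or linear-growth cases are easier.

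It remains to let $R\to\infty$. The left-hand integrand is nonnegative, since $\nabla f\cdot\nabla\psi\ge\varphi\ge0$, and converges pointwise to $\nabla f\cdot\nabla\psi$, so Fatou's lemma gives $\int_S\int_{\R^d}\nabla f\cdot\nabla\psi\,\dd\rho\le\liminf_{R\to\infty}\int_S\int_{\R^d}\chi_R(\nabla f\cdot\nabla\psi)\,\dd\rho$. The key point is that the right-hand side of the displayed identity vanishes in the limit: since $\nabla f$ is globally Lipschitz, $|\nabla f(x,s)|\le C(1+|x|)$, hence that term is bounded in absolute value by $\tfrac{C}{R}\int_{\{R\le|x|\le2R\}\times S}\psi(x)(1+|x|)\,\dd\rho$, which by the quadratic growth of $\psi$ is $\le CR^2\,\rho(\{|x|\ge R\}\times S)\le C\int_{\{|x|\ge R\}\times S}|x|^2\,\dd\rho\to0$, the last limit being precisely the finiteness of the second moment of $\rho$ from Theorem \ref{ex_ss}. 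Therefore $\int_S\int_{\R^d}\nabla f\cdot\nabla\psi\,\dd\rho\le0$, and being also $\ge0$ it equals $0$; the conclusion follows as in the first paragraph. The main obstacle is exactly this tail estimate: once it is in place the rest is the elementary cancellation together with Fatou's lemma, with no further compactness or regularity difficulty.
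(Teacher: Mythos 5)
Your proposal is correct and follows essentially the same route as the paper: test the stationary equation against a function of $x$ alone so that the two relaxation terms cancel (equivalently, integrate the equation over $S$ against $\mu$), then use $\nabla f\cdot\nabla\psi\geq\varphi\geq 0$ to force $\int\varphi\,\dd\bar\rho=0$. The one genuine difference is that you add the cutoff/tail argument (using $|\nabla f(x,s)|\leq C(1+|x|)$, the quadratic growth of $\psi$, the finite second moment from Theorem \ref{ex_ss}, and Fatou) to justify inserting an unbounded, non-compactly-supported $\psi$ into the weak formulation \eqref{fin_eq} — a step the paper's proof passes over silently even though its own examples use $\psi\sim|x|^2$ — at the mild price of explicitly assuming $\psi\in C^1$ with at most quadratic growth.
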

\begin{figure} 
    \centering
    \begin{subfigure}[b]{0.49\textwidth}
        \centering
        \includegraphics[width=\textwidth]{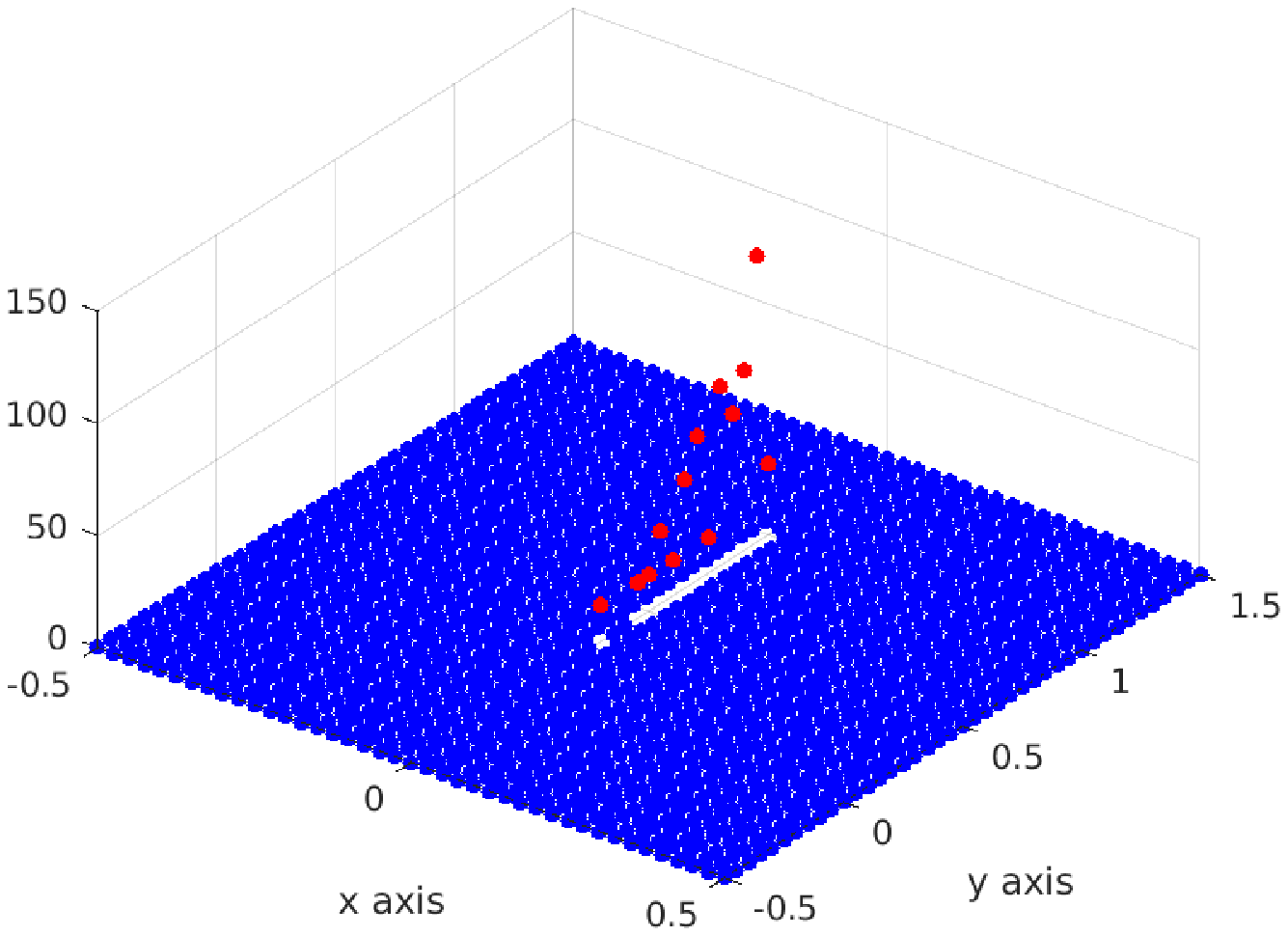}
    \end{subfigure}
    \begin{subfigure}[b]{0.49\textwidth}
        \centering
        \includegraphics[width=\textwidth]{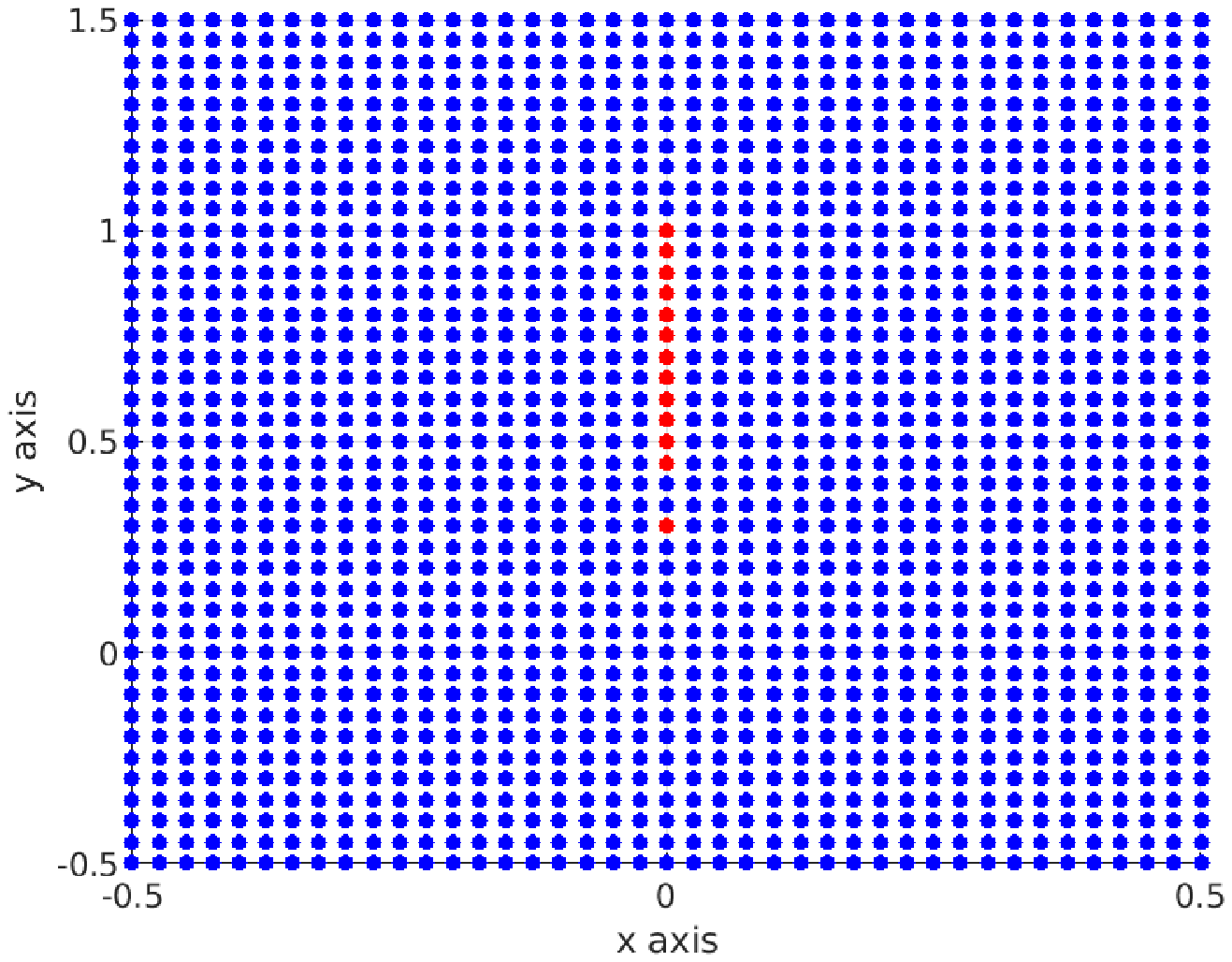}
    \end{subfigure}
    \caption{Stationary state of example \ref{ex_eig} with $v = (0, 1)$ and $p = 0.7$. The red dots represent the support of this state.}
    \label{fig_eig_ss}
    \centering
    \begin{subfigure}[b]{0.49\textwidth}
        \centering
        \includegraphics[width=\textwidth]{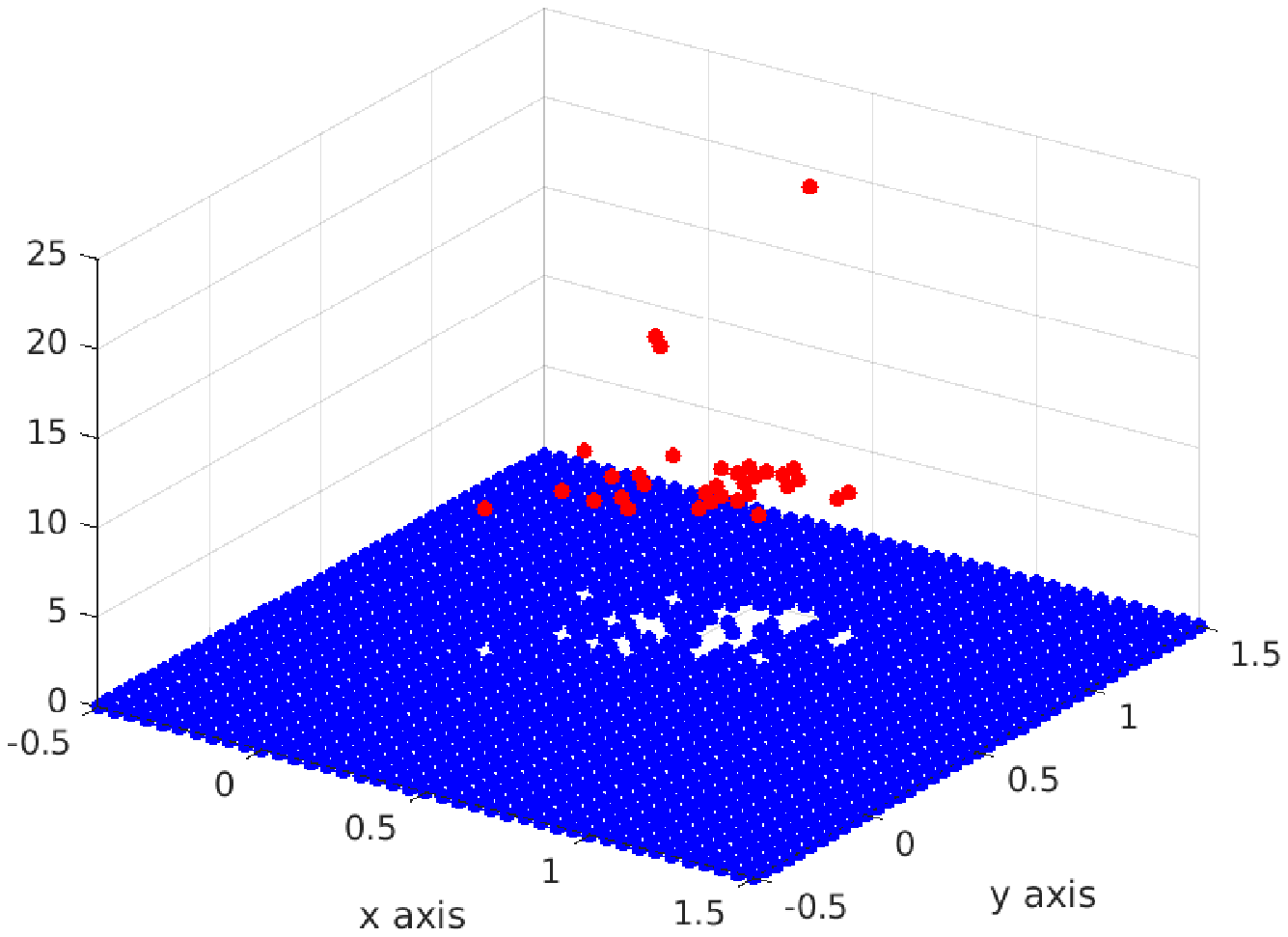}
    \end{subfigure}
    \begin{subfigure}[b]{0.49\textwidth}
        \centering
        \includegraphics[width=\textwidth]{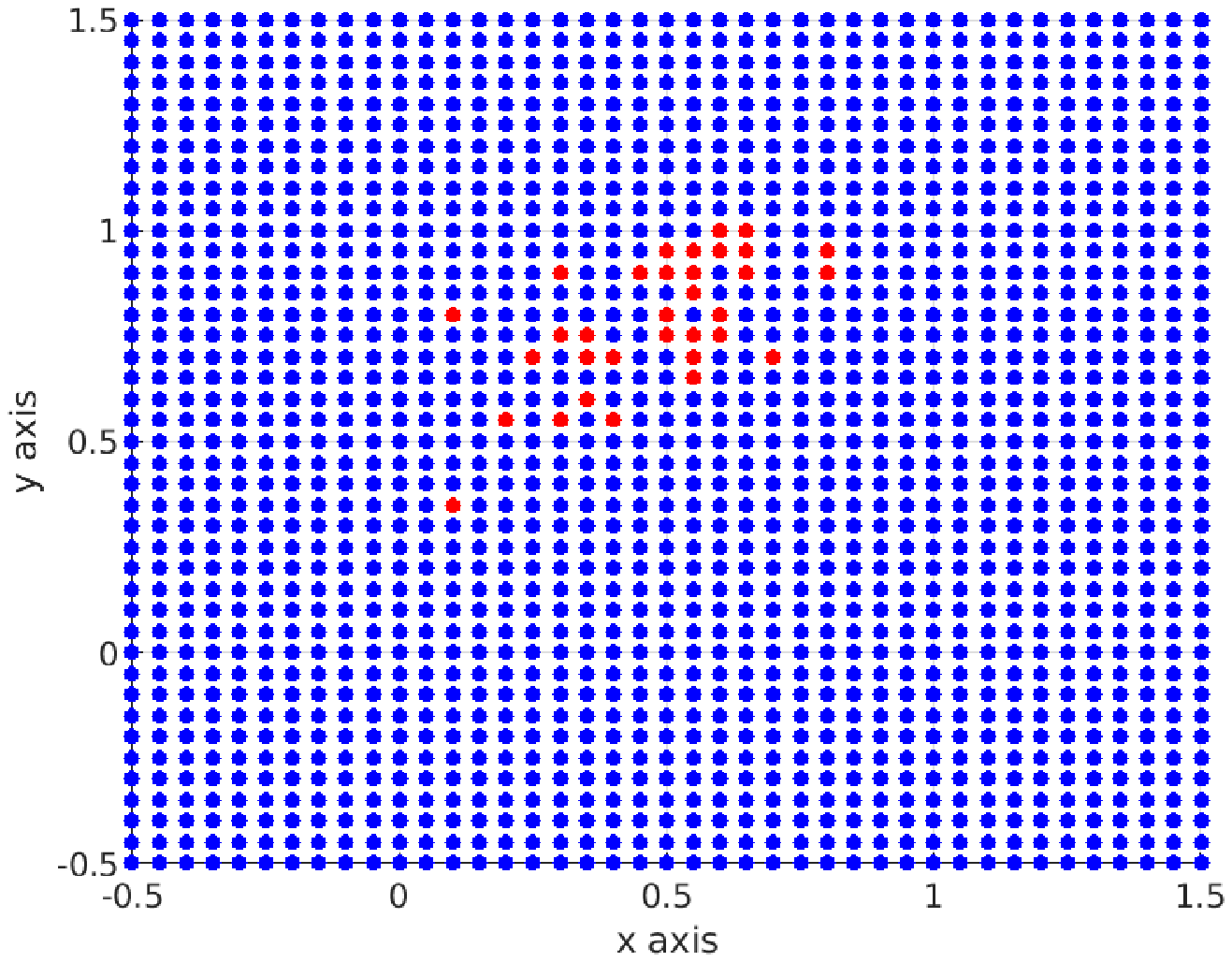}
    \end{subfigure}
    \caption{Stationary state of example \ref{ex_eig} with $v = (1, 1)$ and $p = 0.7$. The red dots represent the support of this state.}
    \label{fig_neig_ss}
\end{figure}
\begin{proof}If we integrate \eqref{init_eq} on $S$, we obtain $0 = \nabla \cdot \left(\int_S \rho \nabla f \,\dd \mu(s)\right)$. Then, we take $\psi, \varphi$ as in the hypothesis, gaining:
\begin{equation*}0 = \int_{\R^d}\int_S \left(\nabla f \cdot \nabla \psi \right)\,\dd \rho(x, s) \geq  \int_{\R^d}\int_S \varphi(x) \,\dd \rho(x, s) =  \int_{\R^d} \varphi(x) \,\dd \bar{\rho}(x).
\end{equation*}
Since $\varphi \geq 0$, this gives us the thesis.
\end{proof} 
\begin{example}Under the assumptions, we have that $\nabla f(x, s) \cdot x \geq c|x|^2$ if $|x| > \bar R$ and therefore we choose in the previous lemma $\psi(x) = \frac{|x|^2}2$ if $|x| > \bar R$, $\psi(x) = \frac{{\bar R}^2}2$ elsewhere, and $\varphi(x) = c|x|^2$ if $|x| > \bar R$, $\varphi(x) = 0$ elsewhere. Then we have that $Supp(\bar \rho) \subseteq \overline{B_{\bar R}(0)}$, that is every stationary state is compactly supported.
\end{example}
\begin{example}
Let $f$ be strongly convex for every $s \in S$ with minimizer $x_s \in \R$ and $d = 1$, then we define $x_0 \coloneqq \inf_{s \in S} x_s$ and $x_1 \coloneqq \sup_{s \in S} x_s$. We choose $\psi$ and $\varphi$ such that:
\begin{equation*}
\psi(x) \coloneqq 
\begin{cases} 
      f(x, s_0) & x < x_0 \\
      f(x_0, s_0) &  x_0 \leq x \leq x_1 \\
      f(x, s_0) - f(x_1, s_0) + f(x_0, s_0) & x > x_1 
\end{cases}
\end{equation*}
\begin{equation*}
\varphi(x) \coloneqq 
\begin{cases}
0 & x \in [x_0, x_1] \\
\inf_{s \in S}(\partial_x f(x, s) \partial_x f(x, s_0)) & elsewhere
\end{cases}
\end{equation*}
where $s_0 \in S$ is fixed. $\varphi \geq 0$ thanks to the strong convexity and $\varphi(x) \neq 0$ if $x \notin [x_0, x_1]$. This way we get that $Supp(\bar \rho) \subseteq  [\inf_{s \in S} x_s, \sup_{s \in S} x_s]$, that is the support of the stationary state is in the convex hull of the minimizers in a $1$-dimensional setting.
\end{example}

\begin{example}
Let us consider the case  $d = 2$, $S = \{1, 2\}$, $\mu = Bern(p)$ for some $p \in (0,1)$. We choose 
$$  f(x, 1) = |x|^2, \qquad f(x, 2) = |x-v|^2  $$
with  $v = (1, 1)$. With Lemma \ref{Lemma:stationarysupport} and the choice $\psi(x) = 1/2$ if $|x - \frac 12 v| \leq \frac 1{\sqrt 2}$ and    $\psi(x) = |x-\frac 1 2 v|^2$ otherwise, we conclude immediately that the support of the stationary solution is contained in the ball with radius $\frac{1}{\sqrt{2}}$ around $\frac{1}2 v$. Now we can choose a test function with compact support such that $\phi = |x_1 - x_2|^2$ in this ball and hence obtain from the weak formulation
$$0 = \int_{\R^d}\int_S \left(\nabla f \cdot \nabla \phi \right)\,\dd \rho(x, s)
= 4 \int_{\R^d}\int_S \left(x_1-x_2 \right)^2\,\dd \rho(x, s).
$$
This further implies that the support of the stationary solution is contained in the line $x_1=x_2$, and by intersecting with the above ball this implies the support is contained in the line segment between $(0,0)$ and $(1,1)$. On this line segment the stationary solution can be computed explicitely as in the previous one-dimensional example. Hence, we obtain a unique stationary solution supported in the convex hull of the minimizers of the functionals $f(\cdot,s)$
 \end{example}

In general, one cannot conclude that the support of a stationary state is contained in the convex hull of the minimizers of $f$ as we see from the following modification of the previous example.

\begin{example} \label{ex_eig}
Let us consider the case  $d = 2$, $S = \{1, 2\}$, $\mu = Bern(p)$ for some $p \in (0,1)$. We choose 
$$  f(x, 1) = |x|^2, \qquad f(x, 2) = |A(x - v)|^2 , \quad A = \text{diag}\left(\frac 1{\sqrt{2}}, 1\right)$$
with  $v = (1, 1)$. Let us assume there is a stationary solution with support in the line segment between $(0,0)$ and $(1,1)$. Denote by $x_t = tv$ and by
$(\eta_1,\eta_2)$ the restrictions of the stationary measures to the support line $\{tv~|~0\leq t \leq 1\}$. Now we choose a test function such that $\phi(x) = x_1-x_2$ on the support line and we obtain from the weak formulation
$$ 0 = \int_{\R^d}\int_S \left(\nabla f \cdot \nabla \phi \right)\,\dd \rho(x, s)  $$
that
$$ 0 = (1 - p) \int_0^1 (1 - x_2) \,\dd\eta_2.$$
Thus, $\eta_2$ is concentrated in $t=1$, which implies that: 
$$ 0 =   \nabla \cdot (\nabla f(\cdot,2)  \rho(\cdot,2)) $$ and hence 
$\rho(\cdot,1) = \frac p{1 - p}\rho(\cdot,2)$, since $\rho(\cdot, 2) = \overline \rho(\cdot)\mu(2) = \rho(\cdot, S)\mu(2) = (\rho(\cdot, 1) + \rho(\cdot, 2))\mu(2)$. $\rho(\cdot,1)$ concentrated in $v$ is however not a solution of the stationary problem, because $\nabla f(v, 1) \neq 0$ in \eqref{fin_eq}.
 We numerically investigate with a Monte Carlo method the case $p=0.7$: in figure \ref{fig_eig_ss} we fix $v = (0,1)$ and in this case the stationary solution has support contained in the convex hull of the minimizers of $f$ $(0, 0)$ and $(0, 1)$. If we fix $v = (1, 1)$ as in figure \ref{fig_neig_ss}, the support of the stationary state in not contained in the convex hull of the minimizers $(0, 0)$ and $(1, 1)$ any longer.
\end{example}

\section{Convergence analysis} \label{conv_an}
In this section, we would like to analyze the limit of solutions of equation \eqref{fund_eq_meas} if $K \to \infty$.
We define the function $F(x) \coloneqq \E_s[f(x,s)]$, with $F$ $m$-strongly convex with $x^*$ unique minimizer. Note that this assumption is weaker than the strong convexity of $f$ we considered in section \ref{scms}. Moreover, if we fix the initial datum $\rho_0$ appropriately, that is such that the marginal on $S$ is $\mu$, then $\mu$ is a stationary marginal measures for $\rho_t$ for every $t > 0$. This can be seen integrating \eqref{fund_eq_meas} against a function $\phi \in C_b(\R^d \times S)$ constant in $x$. We can define $\sigma^2 \coloneqq \sup_{x \in \R^d}\E_s\left[|\nabla f(x, s) - \nabla F(x)|^2\right]$, which can be considered as a variance term, since $\E_s[\nabla f(x, s)] = \nabla F(x)$. Thanks to the Lipschitz continuity of $\nabla f$, this variance term is bounded:
\begin{equation*}|\nabla F(x) - \nabla f(x,s)| \leq \int_S|\nabla f(x, \omega) - \nabla f(x, s)|\,\dd\mu(\omega) \leq L\sup_{s, \omega \in S}|\omega - s|. \end{equation*}
\subsection{Grazing collision limit}  \label{graz_coll}
In this subsection, we would like to discuss the limit $K \to \infty$. For this purpose, we introduce the formal Hilbert expansion:
\begin{equation} \label{hilb_ex}
\rho = \sum_{n = 0}^{+\infty} \frac 1{K^n}\rho_n.
\end{equation}
The actual convergence of this series is not discussed. From this expansion, we obtain a hierarchy of equations: $\rho_0$ solves 
$\rho_0(x, s) =\bar{\rho_0} \otimes \mu $
and, if $n \geq 1$, 
$$\partial_t \rho_{n-1} = \nabla \cdot(\rho_{n-1}\nabla f) + \int_S(\rho_n(x, s') - \rho_n(x, s))\,\dd\mu(s').$$ Integrating on $S$ the equation of order $1$, we get that $\rho_0 =\eta \otimes \mu$, with $\eta$ solving the gradient flow equation :
\begin{equation} \label{eq:gradflow} \partial_t \eta = \nabla \cdot (\eta \nabla F(x))  \end{equation}  since $\eta$ is independent of $s$, thanks to the equation of order zero, and when $K \to \infty$, we expect $\rho$ converging to $\rho_0$. We can interpret $\frac 1K$ as the "learning rate" of the equation and we expect that when $K \to \infty$ the integral part of the equation is nullified and $\rho$ converges towards of the solution of the gradient flow. 

In order to make the analysis more rigorous, we write the following coupling equation for $\Pi$:
\begin{equation} \begin{aligned}\label{coup_eq2}
     \frac{\partial \Pi_t}{\partial t}  =& \, \nabla_x \cdot(\Pi_t \nabla_x f(x, s_1)) +  \nabla_y \cdot(\Pi_t \nabla_y F(y)) 
     + \\ & + K~ \int_S \int_S (\Gamma(s_1, s_2)\Pi_t(x, y, s_1', s_2') - \Pi_t(x, y, s_1, s_2))\dd \mu(s_1') \dd \mu(s_2'),
 \end{aligned} \end{equation}
 with initial datum $\Pi_0$ coupling of $\rho_0$ and $\eta_0 \otimes \mu$, $\Gamma = \mu \delta_{s_1 - s_2}$ probability measure on $S^2$ such that $\Gamma(diag(S^2)) = 1$ and $\eta = \eta(t, y)$ solution of the gradient flow $\partial_t \eta = \nabla_y \cdot (\eta(y)\nabla F(y))$ with initial datum $\eta_0$. Equation \eqref{coup_eq2} is also well--posed thanks to theorem \ref{ex_thm}. The marginals of $\Pi_t$ are  $\rho_t$ and $\eta_t \otimes \mu$ by uniqueness. Using these properties we can derive the following result:
 \begin{prop}Let $F$ be strongly convex with modulus $m$. Moreover, let $\rho_t$ be the unique solution of \eqref{fund_eq_meas} with initial value $\eta_0 \otimes \mu$ and $\eta_t$ the unique solution of the gradient flow \eqref{eq:gradflow} with initial value $\eta_0$. Then the following estimate holds
\begin{equation} \label{eq:grazingestimate1} W_2(\rho_t, \eta_t \otimes \mu) \leq W_2(\rho_0, \eta_0 \otimes \mu)e^{\frac{\max\{-m, -K\}}2t} + \sqrt{\frac{\sigma^2}m\frac{1 - e^{-mt}}m} .\end{equation}
 \end{prop}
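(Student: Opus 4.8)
The plan is to reuse the coupling strategy of Theorems~\ref{thm_stab} and~\ref{thm_conv}, with the twist that I will estimate the spatial part $|x-y|^2$ and the label part $|s_1-s_2|^2$ of the quadratic cost \emph{separately}; this asymmetry is exactly what produces the two different rates $\max\{-m,-K\}$ and $-m$ in \eqref{eq:grazingestimate1}. Concretely, I would fix an arbitrary coupling $\Pi_0$ of $\rho_0$ and $\eta_0\otimes\mu$ with finite second moment and let $\Pi_t$ solve the coupling equation \eqref{coup_eq2}; this is well posed by Theorem~\ref{ex_thm}, since \eqref{coup_eq2} is again of type \eqref{fund_eq_meas}, with ``doubled'' potential $g(x,y,s_1,s_2)=f(x,s_1)+F(y)$ and switching measure $\Gamma=\mu\delta_{s_1-s_2}$, and by uniqueness its two marginals are $\rho_t$ and $\eta_t\otimes\mu$. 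Set $a(t):=\int|x-y|^2\dd\Pi_t$ and $b(t):=\int|s_1-s_2|^2\dd\Pi_t$. Since $|s_1-s_2|^2$ does not depend on $(x,y)$ the transport terms do not contribute to $b'$, and since $\Gamma$ is concentrated on the diagonal of $S^2$ one has $\int_{S^2}|s_1-s_2|^2\dd\Gamma=0$; hence $b'(t)=-Kb(t)$, i.e. $b(t)=b(0)e^{-Kt}$.

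For $a$, the switching term leaves $|x-y|^2$ untouched, so integrating the two divergence terms of \eqref{coup_eq2} by parts gives $a'(t)=-2\int(x-y)\cdot\bigl(\nabla f(x,s_1)-\nabla F(y)\bigr)\dd\Pi_t$. The core step is the decomposition $\nabla f(x,s_1)-\nabla F(y)=\bigl(\nabla F(x)-\nabla F(y)\bigr)+\bigl(\nabla f(x,s_1)-\nabla F(x)\bigr)$: the first summand is absorbed by the $m$-strong convexity of $F$, contributing at most $-2m\,a(t)$; the second is handled by Cauchy--Schwarz and Young's inequality, contributing at most $m\,a(t)+\frac1m\int|\nabla f(x,s_1)-\nabla F(x)|^2\dd\Pi_t$. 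Since the $(x,s_1)$-marginal of $\Pi_t$ is $\rho_t$ and the $S$-marginal of $\rho_t$ is the fixed measure $\mu$, the remaining integral is $\int_{\R^d\times S}|\nabla f(x,s)-\nabla F(x)|^2\dd\rho_t\le\sigma^2$, which is finite by the Lipschitz continuity of $\nabla f$. Putting these together gives $a'(t)\le -m\,a(t)+\sigma^2/m$, and Grönwall's lemma yields $a(t)\le a(0)e^{-mt}+\frac{\sigma^2}{m^2}\bigl(1-e^{-mt}\bigr)$.

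To conclude, I would combine the two bounds: $W_2(\rho_t,\eta_t\otimes\mu)^2\le a(t)+b(t)\le\bigl(a(0)+b(0)\bigr)e^{\max\{-m,-K\}t}+\frac{\sigma^2}{m^2}\bigl(1-e^{-mt}\bigr)$, using $e^{-Kt},e^{-mt}\le e^{\max\{-m,-K\}t}$. Taking square roots (with $\sqrt{u+v}\le\sqrt u+\sqrt v$) and then the infimum over all couplings $\Pi_0$ of $\rho_0$ and $\eta_0\otimes\mu$ — which turns $\inf(a(0)+b(0))$ into $W_2(\rho_0,\eta_0\otimes\mu)^2$, the left-hand side being independent of $\Pi_0$ — gives precisely \eqref{eq:grazingestimate1}, after rewriting $\frac{\sigma^2}{m^2}\bigl(1-e^{-mt}\bigr)=\frac{\sigma^2}{m}\cdot\frac{1-e^{-mt}}{m}$.

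I expect two points to require care. The first is the justification of the formal differentiations of $a$ and $b$ along $\Pi_t$; this is the kind of computation already performed in the proof of Theorem~\ref{thm_stab} and is legitimate thanks to the finite-second-moment and $C^{0,1}$ regularity of $\Pi_t$ from Theorem~\ref{ex_thm} (cf. the a priori estimate \eqref{bound_2}). The second, and the real crux, is the variance-type bound $\int_{\R^d\times S}|\nabla f(x,s)-\nabla F(x)|^2\dd\rho_t\le\sigma^2$, which rests on the time-invariance of the $S$-marginal $\mu$ of $\rho_t$ (obtained by testing \eqref{fund_eq_meas} against functions constant in $x$) together with the uniform Lipschitz control of $\nabla f$ in $s$; it is precisely the separate, unweighted tracking of $a$ and $b$ that lets the switching rate $K$ survive in the homogeneous term of \eqref{eq:grazingestimate1} without contaminating the forcing term $\sigma^2/m$.
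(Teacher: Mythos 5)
Your argument is essentially the paper's own proof: the same coupling equation \eqref{coup_eq2}, the same decomposition $\nabla f(x,s_1)-\nabla F(y)=(\nabla F(x)-\nabla F(y))+(\nabla f(x,s_1)-\nabla F(x))$ handled by strong convexity plus Young, and the same separate Gr\"onwall estimates $b'(t)=-Kb(t)$ for the label part and $a'(t)\le -m\,a(t)+\sigma^2/m$ for the spatial part. The only differences are cosmetic (the paper fixes the particular coupling $\Pi_0=\rho_0\otimes(\eta_0\otimes\delta_{s_1-s_2})$ instead of optimizing over $\Pi_0$ at the end), and your variance bound inherits exactly the same imprecision as the paper's: the invariance of the $S$-marginal of $\rho_t$ does not imply that the conditional law of $s$ given $x$ stays equal to $\mu$, so strictly speaking both arguments only yield the cruder uniform bound $\sup_{x,s}|\nabla f(x,s)-\nabla F(x)|^2\le L^2\sup_{s,\omega\in S}|s-\omega|^2$ in place of $\sigma^2$.
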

 \begin{proof}
We employ  \eqref{coup_eq2} to obtain
 $$\frac \dd{\dd t}\int_{\R^{2d} \times S^2}|x - y|^2\dd\Pi_t(x,y,s_1,s_2) = -2\int_{\R^{2d} \times S^2}(\nabla f(x,s_1) - \nabla F(y))\cdot(x - y)\dd\Pi_t(x,y,s_1,s_2), $$
 and we can then exploit the strong convexity of $F$ and Young's inequality to obtain
\begin{equation}\begin{aligned}\label{var_term} (&\nabla f(x,s_1) - \nabla F(y))\cdot(x - y) = (\nabla f(x,s_1) - \nabla F(x) + \nabla F(x) - \nabla F(y))\cdot(x - y) \geq \\
&(\nabla f(x,s_1) - \nabla F(x))\cdot(x - y) + m|x - y|^2 \geq \frac m2|x - y|^2 - \frac 1{2m}|\nabla f(x,s_1) - \nabla F(x)|^2. \end{aligned} \end{equation}
We then have the following estimate of the spatial part of the Wasserstein metric:
\begin{align*}\frac \dd{\dd t}\int_{\R^{2d} \times S^2}|x - y|^2\dd\Pi_t(x,y,s_1,s_2)& \leq  -m\int_{\R^{2d} \times S^2}|x - y|^2 \dd\Pi_t(x,y,s_1,s_2) + \\ &+ \frac 1m\int_{\R^{2d} \times S^2}|\nabla f(x,s_1) - \nabla F(x)|^2\dd\Pi_t(x,y,s_1,s_2). \end{align*}
In particular, we can choose $\Pi_0 \coloneqq \rho_0 \otimes (\eta_0 \otimes \delta_{s_1 - s_2})$ and therefore, by uniqueness, $\Pi_t = \rho_t \otimes (\eta_t \otimes \delta_{s_1 - s_2})$. With this particular choice of $\Pi$:
\begingroup
\allowdisplaybreaks
\begin{align*}\int_{\R^{2d} \times S^2}|\nabla f(x,s_1) - &\nabla F(x)|^2 \dd\Pi_t(x,y,s_1,s_2) = \int_{\R^{2d} \times S^2}|\nabla f(x,s_2)- \nabla F(x) |^2\dd\Pi_t(x,y,s_1,s_2) \\ 
&\leq \int_{\R^{2d} \times S^2}|\nabla f(x,s_2) - \nabla F(x)|^2\,\dd \left(\int_S \rho_t(s_1)\dd s_1\right)(x)\dd \mu(s_2) \leq \sigma^2.
\end{align*}
\endgroup
We can put together the two previous inequalities with the Grönwall inequality:
$$\int_{\R^{2d} \times S^2}|x - y|^2 \dd\Pi_t(x,y,s_1,s_2) \leq \left(\int_{\R^{2d} \times S^2}|x - y|^2\dd\Pi_0(x,y,s_1,s_2)\right)e^{-mt} + \frac{\sigma^2}m\frac{1 - e^{-mt}}m$$
For the random part of our Wasserstein metric, we exploit the optimality of $\Gamma$:
\begin{align*}\frac \dd{\dd t}&\int_{\R^{2d} \times S^2}|s_1 - s_2|^2\dd\Pi_t(x,y,s_1,s_2)  = \\ &=
K\int_{\R^{2d} \times S^2}|s_1 - s_2|^2\left(\int_{S^2}\dd\Pi_t(x,y,s_1',s_2')\right)\dd\Gamma(s_1,s_2) - K\int_{\R^{2d} \times S^2}|s_1 - s_2|^2\dd\Pi_t(x,y,s_1,s_2) = \\& =
K\int_{S^2}|s_1 - s_2|^2\dd\Gamma(s_1, s_2) - K\int_{\R^{2d} \times S^2}|s_1 - s_2|^2\dd\Pi_t(x,y,s_1,s_2) = \\ &=
- K\int_{\R^{2d} \times S^2}|s_1 - s_2|^2\dd\Pi_t(x,y,s_1,s_2).
\end{align*}
Then, we have by definition the estimate  \eqref{eq:grazingestimate1} for the complete Wasserstein metric. \end{proof}

The above estimate is not optimal in the sense that there is still a variance term left as $K \rightarrow \infty$. If there were a probability measure $\Pi'_t$ such that $\Pi_t^K \overset{w}{\rightharpoonup} \Pi'_t$, where $\Pi^K$ is the solution of \eqref{coup_eq2} with the corresponding value of $K$, when $K \to \infty$, the variance term in \eqref{var_term} can be estimated better as
\begin{align*}\int_{\R^{2d} \times S^2}\Bigg(\nabla f(x, s_1) - &\int_S \nabla f(x,s')\,\dd \mu(s')\Bigg)\cdot (x - y)\dd(\Pi_t^K - \Pi'_t)(x,y,s_1,s_2) + \\ &+ \int_{\R^{2d} \times S^2}\left(\nabla f(x, s_1) - \int_S \nabla f(x,s')\,\dd \mu(s')\right)\cdot(x - y)\dd \Pi'_t(x,y,s_1,s_2). \end{align*}
\begin{figure}
    \centering
    \begin{subfigure}[b]{0.43\textwidth}
        \centering
        \includegraphics[width=\textwidth]{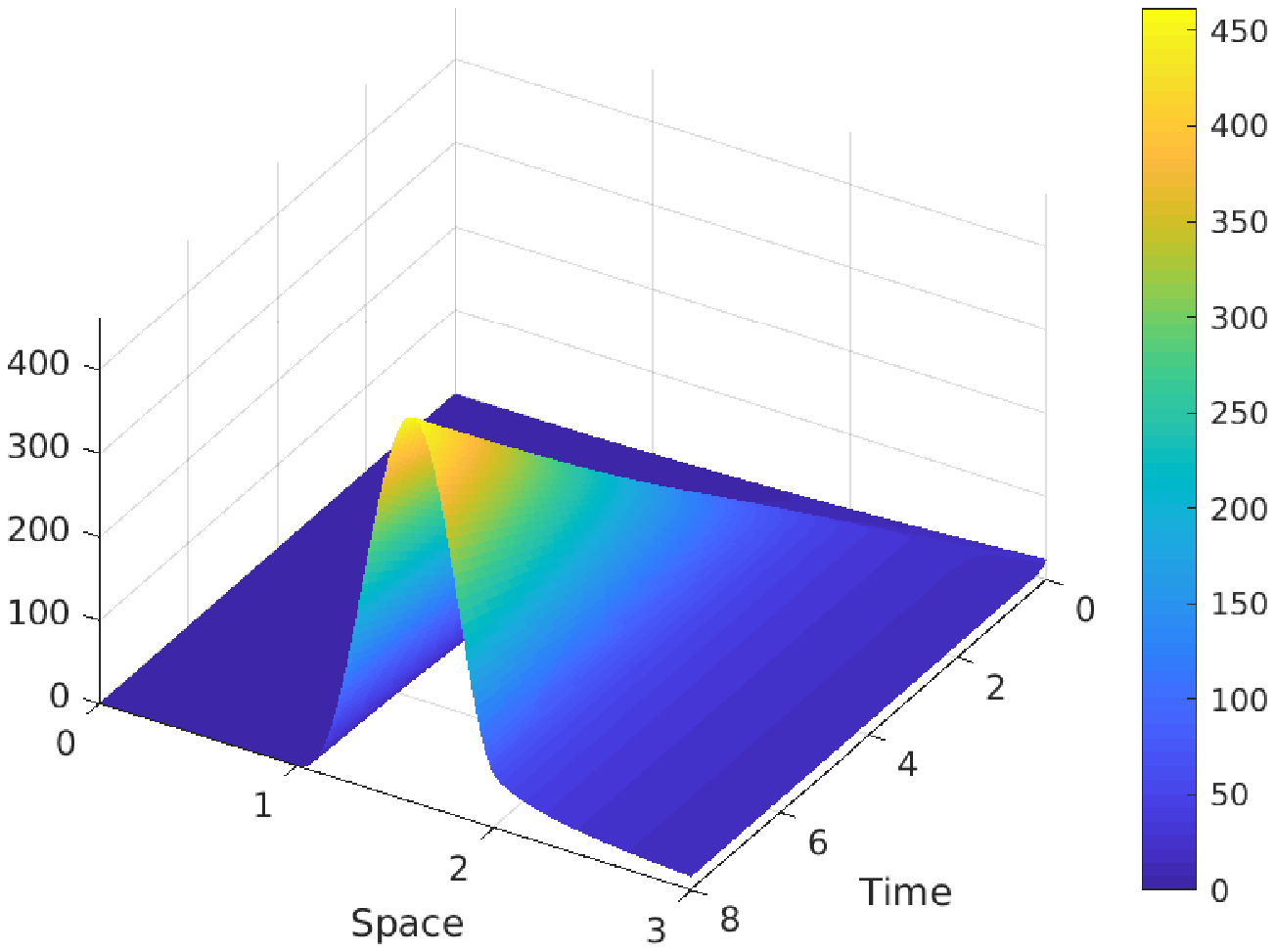}
    \end{subfigure}
    \begin{subfigure}[b]{0.43\textwidth}
        \centering
        \includegraphics[width=\textwidth]{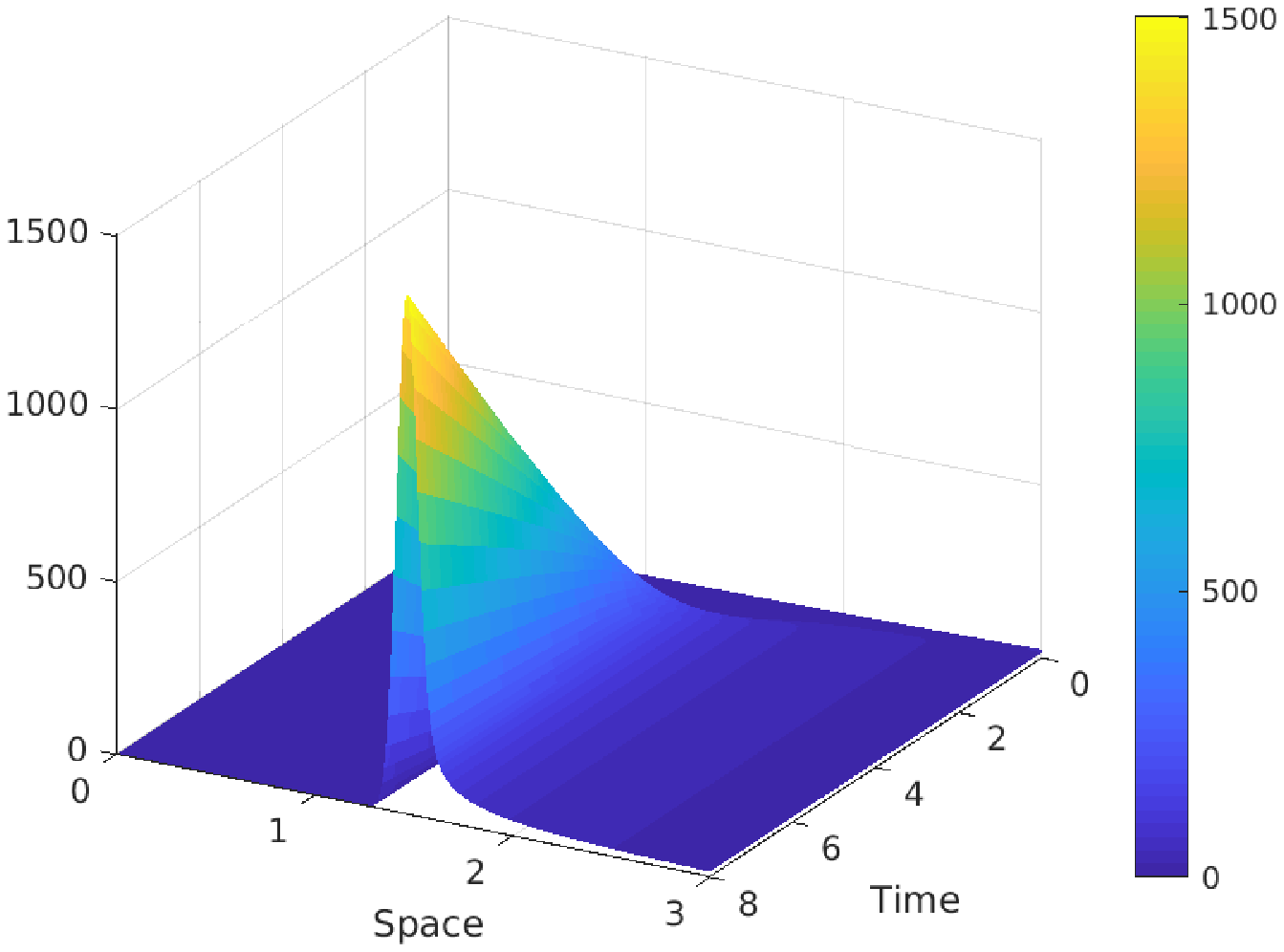}
    \end{subfigure}
    \begin{subfigure}[b]{0.43\textwidth}
        \centering
        \includegraphics[width=\textwidth]{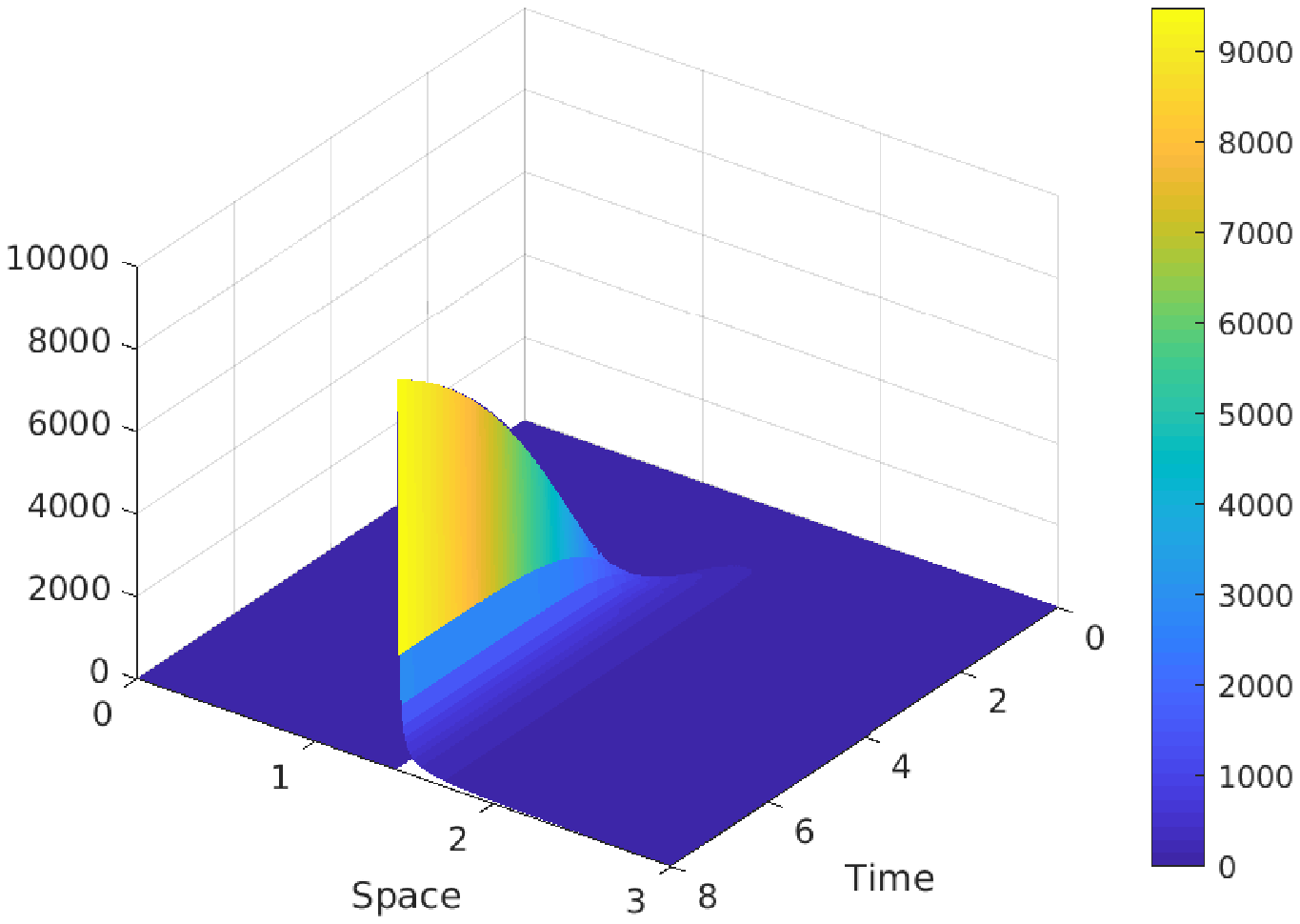}
    \end{subfigure}
    \caption{Solutions for a fixed label to equation \eqref{fund_eq_meas} with $f(x, s) = \frac s2|x - s|^2$: in the first two figures the solutions with $K = 10$ and $K = 10^9$, in the figure below the corresponding solution of the gradient flow as discussed in subsection \ref{graz_coll}}
    \label{conv_grad_flow}
\end{figure}
The first term can be treated this way:
\begin{align*} \Bigg|\int_{\R^{2d} \times S^2}\left(\nabla f(x, s_1) - \int_S \nabla f(x,s')\,\dd \mu(s')\right)&\cdot (x - y)\dd(\Pi_t^K - \Pi'_t)(x,y,s_1,s_2)\Bigg| \\ & \leq C\int_{\R^{2d} \times S^2}|x - y|\dd(\Pi_t^K - \Pi'_t)(x, y, s_1, s_2). \end{align*}
This weak convergence can be represented, by Skorohod's theorem (cf. section $17.3$ in \cite{PM91}), as an a.s. convergence of random variables $X_K$, $Y_K$, distributed as $\int_{\R^d \times S^2}\,\dd\Pi_t^K(y, s_1, s_2)$ and $\int_{\R^d \times S^2}\,\dd\Pi_t^K(x, s_1, s_2)$. Moreover, the second moment of the random variable $X_K - Y_K$ is uniformly bounded in $K$, thanks to the calculation for boundedness of the $2$-Wasserstein metric, and this tells us that the first term is approaching zero when $K \to \infty$, through uniform integrability in $K$ of $X_K - Y_K$. \\
Explicitly, the probability measure $\Pi'_t$ has the form $\Pi'_t = \Sigma_t(x, y)\Gamma(s_1, s_2) = \Sigma_t(x, y)\mu(s_1)\delta_{s_1 - s_2}$, where the probability measure $\Sigma_t(x, y) \coloneqq \int_{S^2}\Pi_t^K(s_1, s_2)$ is the marginal of $\Pi_t^K$ on $\R^{2d}$, which is independent of $K$, and hence we have that the second term is zero:
$$\int_{\R^{2d} \times S^2}\left(\nabla f(x, s_1) - \int_S \nabla f(x,s')\,\dd \mu(s')\right)\cdot(x - y)\dd \Pi'_t(x,y,s_1,s_2) = 0. $$
Now, integrating the inequalities for the spatial and the random parts of the Wasserstein metric, we get, by defining the function 
$$g_K(t) \coloneqq \int_{\R^{2d} \times S^2}\left(\nabla f(x, s_1) - \int_S \nabla f(x,s')\,\dd \mu(s')\right)\cdot (x - y)\dd(\Pi_t^K - \Pi'_t)$$ such that for every $t \in [0, T]$ with $T > 0$ $g_K(t) \to 0$ as $K \to \infty$, that:
\begin{align*}\int_{\R^{2d} \times S^2}&|x - y |^2 + |s_1 - s_2|^2\,\dd\Pi_t^K(x, y, s_1, s_2) \leq \left(\int_{\R^{2d} \times S^2}|x - y|^2\,\dd\Pi_0(x, y, s_1, s_2)\right)e^{-mt} + \\
& + \left(\int_{\R^{2d} \times S^2}|s_1 - s_2|^2\,\dd\Pi_0(x, y, s_1, s_2)\right)e^{-Kt} + \int_0^t e^{m(r - t)}g_K(r)\,\dd r.
\end{align*}
Choosing the initial data appropriately to nullify the first term at right hand side (i.e. such that $x = y$ $\Pi_0$-a.s.), we have that, when $K \to \infty$, $W_2(\rho_t, \eta_t \otimes \mu) \to 0$, for every $t > 0$, since $g_K = g_K(t)$ is integral of random variables with second moments bounded for $t \in [0, T]$, $K \geq 0$ and we can apply the Lebesgue theorem, that is, the solution of equation \eqref{fund_eq_meas} tends to the solution of the gradient flow $\partial_t \eta(t, y) = \nabla \cdot (\eta(t,y)\nabla F(y))$. \\
We miss just the proof of $\Pi_t^K \overset{w}{\rightharpoonup} \Pi'_t$, but the set of probability measures $(\Pi_t^K)_{K \geq 0}$ is tight for every $t \in [0, T]$, since the second moments are bounded in time and parameter $K$, so for every subsequence of $(\Pi_t^K)_{K \geq 0}$, there exists a subsubsequence which converges weakly to a certain probability measure $p_t$. One can notice that the following relation holds:
\begin{equation}\begin{aligned}\label{rel_prob} \Pi_t' = \Gamma(&s_1, s_2)\int_{S^2}\Pi_t'(x, y, s_1', s_2')\,\dd s_1'\dd s_2' = \Gamma(s_1, s_2)\Sigma_t(x, y) = \\&= \int_{S^2}\mu(s_1)\delta_{s_1 - s_2}\Pi_t^K(x, y, s_1', s_2')\,\dd s_1'\dd s_2' = \int_{S^2}\Gamma(s_1, s_2)\Pi_t^K(x, y, s_1', s_2')\,\dd s_1' \dd s_2'. \end{aligned}\end{equation}
We can multiply equation \eqref{coup_eq2} with $\phi \in C_b(S^2, \D(\R^{2d})) \subseteq C_b(S^2 \times \R^{2d})$ and, then, integrate on $\R^{2d} \times S^2$ and from $0$ to $T$, taking into account \eqref{rel_prob} and getting, for every $T > 0$: 
\begin{align*}\left|\int_0^T\left(\int_{\R^{2d} \times S^2}\phi\,\dd(\Pi_t^K - \Pi_t')\right)\dd t\right|& \leq \frac 1K\left|\int_{\R^{2d} \times S^2}\phi\,\dd(\Pi_T^K - \Pi_0)\right| + \\ &+ \frac 1K\int_0^T\int_{\R^{2d} \times S^2}|\nabla_x \phi \cdot \nabla_x f(x, s_1) + \nabla_y \phi \cdot \nabla_y F(y)|\,\dd \Pi_t^K(x,y,s_1,s_2), \end{align*}
and the terms at right hand side are bounded, thanks to the properties of $\phi$ and therefore, when $K \to \infty$, the term on left hand side tends to zero. This holds for every $T > 0$ and then we have that $\int_{\R^{2d} \times S^2}\phi \,\dd\Pi_t^K \to \int_{\R^{2d} \times S^2}\phi\,\dd \Pi_t', \forall \phi \in C_b(S^2, \D(\R^{2d}))$. But for the extracted subsubsequence, it holds $\int_{\R^{2d} \times S^2}\phi \,\dd\Pi_t^K \to \int_{\R^{2d} \times S^2}\phi\,\dd p_t, \forall \phi \in C_b(S^2, \D(\R^{2d}))$, so $p_t = \Pi_t'$, since $C_b(S^2, \D(\R^{2d}))$ is a separating class, and by Urysohn property we have the entire sequence converging to $\Pi_t'$.

 In order to gain a more precise rate of convergence, we can exploit the Young inequality and the Lipschitz continuity of the gradient:
\begin{align*}\frac \dd{\dd t}&\int_{\R^{2d} \times S^2}|x - y|^2\dd \Pi_t(x,y,s_1,s_2) = -2\int_{\R^{2d} \times S^2}(x - y)\cdot\left(\nabla_xf(x, s_1) - \nabla_y F(y)\right)\,\dd \Pi_t  \\ &= -2\int_{\R^{2d} \times S^2}(x - y)\cdot\left(\nabla_x F(x) - \nabla_y F(y) + \nabla_x f(x, s_1) - \nabla_x F(x) \right)\,\dd \Pi_t(x, y, s_1, s_2) \\ &\leq -2m\int_{\R^{2d}\times S^2}|x - y|^2\,\dd \Pi_t + m\int_{\R^{2d}\times S^2}|x - y|^2\,\dd \Pi_t + \frac {L^2}m\int_{\R^{2d}\times S^2}|s_1 - s_2|^2\,\dd \Pi_t \\ &\leq -m\int_{\R^{2d}\times S^2}|x - y|^2\,\dd \Pi_t(x, y, s_1, s_2) + \frac{L^2}m \left(\int_{\R^{2d}\times S^2}|s_1 - s_2|^2\,\dd \Pi_0(x, y, s_1, s_2)\right)e^{-Kt}.  \end{align*}
We can apply the Grönwall lemma to the previous inequality, getting:
\begin{align*}\int_{\R^{2d} \times S^2}|x - y|^2\dd \Pi_t(x,y,s_1,s_2) \leq &\left(\int_{\R^{2d} \times S^2}|x - y|^2\dd \Pi_0\right)e^{-mt} + \\ &+ \frac {L^2}m\left(\int_{\R^{2d} \times S^2}|s_1 - s_2|^2\dd \Pi_0\right)\frac{e^{-mt} - e^{-Kt}}{K-m}, \end{align*}
where we assumed $K > m$, which is justified for the limit of large $K$ we are interested in. 
Since the initial datum is chosen appropriately to nullify the first term at right hand side, that is $\Pi_0(x, y, s_1, s_2) = \delta_{x - y}(x)\otimes \mu(s_1) \otimes \eta_0(y) \otimes \mu(s_2)$, we obtain the rate of convergence in the Wasserstein metric:
$$W_2(\rho_t, \eta_t\otimes \mu) \leq C\left(L\,\sqrt{\frac{e^{-mt} - e^{-Kt}}{mK-m^2}} + e^{- \frac K2 t}\right),$$
where $C \coloneqq (\int_{\R^{2d} \times S^2}|s_1 - s_2|^2\,\dd \Pi_0(x, y, s_1, s_2))^{1/2}$.
\subsection{Concentration of solutions}
If $X \coloneqq (X_t, S_t)_{t \geq 0}$ is a stochastic process on $\R^d \times S$, such that $(X_t, S_t)$ is a random variable with law $\rho_t$,
we would like to estimate the average distance between $X_t$ and $x^*$.
In the special case $f$ $m$-strongly convex with $x^*$ unique minimizer for every $s \in S$ (and therefore $F$ is m-strongly convex as well), one has the estimate:
\begin{align*}\frac{\dd}{\dd t}\int_{\R^d \times S}|x - x^*|^2\,\dd\rho_t(x, s) &= -2\int_{\R^d \times S}(x - x^*)\cdot(\nabla f(x, s) - \nabla f(x^*, s))\,\dd\rho_t(x, s) \leq \\ &\leq -2m\int_{\R^d \times S}|x - x^*|^2\,\dd\rho_t(x, s)
\end{align*}
that is $\E \left[|X_t - x^*|^2\right] \leq \E \left[|X_0 - x^*|^2\right]e^{-2mt}$, i.e. $X_t$ converges to $x^*$ in $L^2$ as $t \to \infty$.
This can also give an idea about the convergence of $F(X_t)$ to $F(x^*)$. Since $F$ is also strongly convex, we find
$$|F(X_t) - F(x^*)| \leq \nabla F(X_t)\cdot(X_t - x^*) - m|X_t - x^*|^2.$$
We integrate the previous inequality and we exploit the Lipschitz continuity of $\nabla F$:
\begin{equation*}\E\left[|F(X_t) - F(x^*)|\right] \leq \E\left[\left|\nabla F(X_t)\right|\cdot |X_t - x^*| - m|X_t - x^*|^2\right] \leq (L- m)\E\left[|X_t - x^*|^2\right], \end{equation*}
that is, in the particular case in which $f$ is strongly convex, $F(X_t)$ is converging exponentially in $L^1$ to $F(x^*)$ as $t \to \infty$. 

In general, if just $F$ is $m$--strongly convex, the previous calculation is not valid any longer and the estimates we can obtain are not that straightforward. For this sake we rescale time  with factor $\tau \coloneqq Kt$ and 
\begin{equation}\label{fund_eqresc} \frac{\partial \nu}{\partial t} = \frac{1}K \nabla \cdot(\nu \nabla f(x, s)) +  \int_S(\nu(x, s') - \nu(x, s))\dd \mu(s'),\end{equation}
and consider $X_t,S_t$ distributed according to the law $\nu_t$.

\begin{thm}\label{long_ineq} Let $F$ be $m$--strongly convex and $C >0$ sufficienly large.
Then there exist $K_0 > 0$  such that for all $K \geq K_0$ the estimate
\begin{equation}\begin{aligned}  &\E \left[\left  |X_t - x^* - \frac 1K\left(\nabla f(X_t, S_t) - \nabla F(X_t)\right)\right|^2\right] \leq \\
  &\E \left[\left  |X_0 - x^* - \frac 1K\left(\nabla f(X_0, S_0) - \nabla F(X_0)\right)\right|^2 + \frac C{K^2}|\nabla f(X_0, S_0) - \nabla F(X_0)|^2\right]e^{-\frac \beta K t}  + \\ & + \frac{1 - e^{-\frac \beta K t}}\beta\frac{C + 1}{K^2} \sigma^2  \label{mean_dis}
\end{aligned}\end{equation}
holds.
\end{thm}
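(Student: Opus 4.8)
The plan is to run an augmented--Lyapunov (corrector) argument directly on the rescaled equation \eqref{fund_eqresc}, treating the switching term as the $O(1)$ fast part and the gradient part as the $O(1/K)$ slow part. First I would introduce the drift fluctuation $\chi(x,s):=\nabla f(x,s)-\nabla F(x)$, which has vanishing $\mu$--average in $s$, satisfies $\int_S|\chi(x,s')|^2\dd\mu(s')\le\sigma^2$ for every $x$, and is pointwise bounded, $|\chi(x,s)|\le\bar\chi:=L\sup_{s,\omega\in S}|s-\omega|$, by Lipschitz continuity of $\nabla f$. Then I would set $z(x,s):=x-x^*-\frac1K\chi(x,s)$ and define the augmented functional $\Phi(x,s):=|z(x,s)|^2+\frac C{K^2}|\chi(x,s)|^2$. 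Since $|z(X_t,S_t)|^2\le\Phi(X_t,S_t)$ and $\Phi(X_0,S_0)$ is exactly the bracketed expression on the right--hand side of \eqref{mean_dis}, it is enough to estimate $t\mapsto\E[\Phi(X_t,S_t)]$. From the weak form of \eqref{fund_eqresc} one has $\frac{\dd}{\dd t}\E[\psi(X_t,S_t)]=\E[(\mathcal A\psi)(X_t,S_t)]$ with $\mathcal A\psi(x,s)=-\frac1K\nabla f(x,s)\cdot\nabla_x\psi(x,s)+\int_S\psi(x,s')\dd\mu(s')-\psi(x,s)$; since $\Phi$ is neither smooth nor compactly supported, I would justify this by the $C^2$--mollification of $f$ and the truncation of test functions used in the proof of Theorem \ref{ex_thm}, noting that mollification preserves $L$, the strong--convexity modulus $m$ of $F=\E_s[f]$, and the bound $\sigma^2$.

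The computational heart is one cancellation. Expanding $|z|^2=|x-x^*|^2-\frac2K(x-x^*)\cdot\chi+\frac1{K^2}|\chi|^2$ and using that $\chi$ is an eigenfunction of the switching part of $\mathcal A$ with eigenvalue $-1$ (because $\int_S\chi\dd\mu=0$), the switching part of $\mathcal A\Phi$ contributes $\frac2K(x-x^*)\cdot\chi+\frac{1+C}{K^2}\big(\int_S|\chi(x,s')|^2\dd\mu(s')-|\chi(x,s)|^2\big)$, while the transport part, after writing $\nabla f=\nabla F+\chi$, contributes $-\frac2K\nabla F(x)\cdot(x-x^*)-\frac2K\chi\cdot(x-x^*)$ plus terms of order $1/K^2$ and $1/K^3$. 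The two $O(1/K)$ terms $\pm\frac2K\chi\cdot(x-x^*)$ cancel exactly — this is the whole reason for the corrector $-\frac1K\chi$ — so that $\mathcal A\Phi(x,s)=-\frac2K\nabla F(x)\cdot(x-x^*)$ plus $O(1/K^2)$ and $O(1/K^3)$ terms arising from the $x$--derivative of $\chi$, the scalar product $\nabla F\cdot\chi$, the piece $\frac1{K^2}|\chi|^2$, and the transport derivative of $\frac C{K^2}|\chi|^2$.

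Next I would bound the remainder. The leading term is handled by $m$--strong convexity of $F$ and $\nabla F(x^*)=0$: $-\frac2K\nabla F(x)\cdot(x-x^*)\le-\frac{2m}K|x-x^*|^2$. For the $O(1/K^2)$ terms I would combine all genuine variance--type contributions via $|\chi(x,s)|^2=\int_S|\chi(x,s')|^2\dd\mu(s')-g(x,s)$ with $g$ of vanishing $\mu$--average: the coefficient of the pointwise $|\chi(x,s)|^2$ then becomes $\frac{1-C}{K^2}\le0$ (for $C\ge1$, which is exactly why one carries the extra term $\frac C{K^2}|\chi|^2$) and is dropped, while the remaining $\mu$--averaged square is $\le\frac{1+C}{K^2}\sigma^2$. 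The leftover terms — those with $\nabla F\cdot\chi$, with $(\nabla_x\chi)\nabla f\cdot(x-x^*)$, and the transport derivative of $|\chi|^2$ — are estimated with $|\nabla F(x)|\le L|x-x^*|$, $|\nabla_x\chi|\le2L$ a.e., $|\nabla f(x,s)|\le L|x-x^*|+\bar\chi$, $|\chi|\le\bar\chi$ and Young's inequality, and are absorbed into $\frac mK|x-x^*|^2$ plus an $O(1/K^3)$ error once $K\ge K_1(m,L,\bar\chi,C)$. This gives $\mathcal A\Phi(x,s)\le-\frac mK|x-x^*|^2+\frac{1+C}{K^2}\sigma^2+\frac D{K^3}$ for $K\ge K_1$.

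To close, I would use $\Phi\le2|x-x^*|^2+\frac{(C+2)\bar\chi^2}{K^2}$ to replace $-\frac mK|x-x^*|^2$ by $-\frac\beta K\Phi$ for a suitable $\beta>0$ depending on $m$, up to another $O(1/K^3)$ term; if $\sigma=0$ then $\chi\equiv0$, $z=x-x^*$, and the claim is the elementary $\E|X_t-x^*|^2\le\E|X_0-x^*|^2e^{-2mt/K}$, so one may assume $\sigma>0$ and take $K_0\ge K_1$ large enough that all $O(1/K^3)$ errors are dominated by $\frac{\sigma^2}{K^2}$. This produces the differential inequality $\frac{\dd}{\dd t}\E[\Phi(X_t,S_t)]\le-\frac\beta K\E[\Phi(X_t,S_t)]+\frac{C+1}{K^2}\sigma^2$ for $K\ge K_0$, and Grönwall's lemma, together with $|z(X_t,S_t)|^2\le\Phi(X_t,S_t)$ and the identification of $\Phi(X_0,S_0)$, yields \eqref{mean_dis}. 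The hard part is the bookkeeping in the remainder estimate: one must keep track of exactly which $O(1/K^2)$ contributions survive the cancellation and check that the only non--dissipative survivor is genuinely of variance type. What makes this delicate is that $\E[|\chi(X_t,S_t)|^2]$ need not be $\le\sigma^2$, since the conditional law of $S_t$ given $X_t$ is not $\mu$; this is precisely why the pointwise $|\chi(x,s)|^2$ must be carried with a nonpositive coefficient — forcing the augmented functional with $C\ge1$ — and only the $\mu$--averaged square $\int_S|\chi(\cdot,s')|^2\dd\mu(s')$ may be bounded by $\sigma^2$.
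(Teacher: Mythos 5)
Your proof is correct and follows essentially the same route as the paper's: the same augmented Lyapunov functional $\left|x-x^*-\frac1K(\nabla f-\nabla F)\right|^2+\frac C{K^2}|\nabla f-\nabla F|^2$, the same exact cancellation of the $O(1/K)$ cross terms between the transport part and the switching part, the same use of $C$ large to keep the coefficient of the pointwise $|\nabla f-\nabla F|^2$ nonpositive so that only the $\mu$-averaged variance (bounded by $\sigma^2$) survives, and the same absorption of lower-order terms followed by Gr\"onwall. Your explicit remarks on the mollification needed to apply the generator to a non-smooth test function and on why $\E[|\nabla f(X_t,S_t)-\nabla F(X_t)|^2]$ cannot be bounded by $\sigma^2$ directly spell out points the paper leaves implicit, but the argument is the same.
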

\begin{proof}
With the parameter $K$ 
large enough, we exploit equation \eqref{fund_eq_meas} and the boundedness of the Hessian matrix of $f$ (and, therefore, of $F$), given by the Lipschitz continuity of $\nabla f$, hence $G \coloneqq \nabla^2 f - \nabla^2 F$ is bounded. In order to estimate this distance, we calculate, for $C > 0$ large enough:
\begingroup
\allowdisplaybreaks
\begin{align*}&\frac 1{2K}\frac \dd{\dd t}\left(\E \left[\left  |X_t - x^* - \frac 1K\left(\nabla f(X_t, S_t) - \nabla F(X_t)\right)\right|^2\right] + \frac C{K^2}\E\left[|\nabla f(X_t, S_t) - \nabla F(X_t)|^2\right]\right) = \\
&= \frac 1{2K}\frac \dd{\dd t}\int_{\R^d \times S}\left[\left|x - x^* - \frac 1K(\nabla f(x, s) - \nabla F(x))\right|^2 + \frac C{K^2}|\nabla f(x, s) - \nabla F(x)|^2\right]\dd \rho_t(x,s) = \\
&= -\frac 1{2K}\int_{\R^d \times S} \nabla\left|x - x^* - \frac 1K(\nabla f(x, s) - \nabla F(x))\right|^2\cdot \nabla f(x, s)\,\dd \rho_t(x,s) + \frac 12\int_{\R^d \times S} \Big|x - x^* - \\ &\qquad \frac 1K(\nabla f(x, s) - \nabla F(x))\Big|^2\,\dd(\bar \rho_t \otimes \mu - \rho_t)(x, s) - \frac C{2K^3}\int_{\R^d \times S}\nabla |\nabla f(x, s) - \nabla F(x)|^2\cdot \\ &\qquad \cdot \nabla f(x, s)\,\dd \rho_t(x, s) + \frac C{2K^2}\int_{\R^d \times S} |\nabla f(x, s) - \nabla F(x)|^2 \,\dd(\bar \rho_t \otimes \mu - \rho_t)(x, s) = \\ 
&= -\frac 1K\int_{\R^d \times S}\nabla f(x, s)\cdot\left(I_d + \frac GK\right)\left(x - x^* - \frac 1K(\nabla f(x, s) - \nabla F(x))\right)\,\dd \rho_t(x, s) + \\ &\qquad + \frac {C + 1}{2K^2}\int_{\R^d \times S} |\nabla f(x, s) - \nabla F(x)|^2 \,\dd(\bar \rho_t \otimes \mu - \rho_t)(x, s) - \frac 1K\int_{\R^d \times S}(x - x^*)\cdot(\nabla f(x, s) - \\ &\qquad -\nabla F(x))\,\dd(\bar \rho_t \otimes \mu - \rho_t)(x,s) - \frac C{K^3}\int_{\R^d \times S} \nabla f(x, s)\cdot G(\nabla f(x, s) - \nabla F(x))\,\dd \rho_t(x, s).
\end{align*}
We can then use the definition of $\sigma^2$ on the second term, exploit that $\nabla F(x^*) = 0$ and add and subtract a term with $\nabla F(x)$. In this way, the quantity above can be estimated by:
\begin{align*}
& -\frac 1K\int_{\R^d \times S}(\nabla f(x, s) - \nabla F(x))\cdot\left(I_d + \frac GK\right)\left(x - x^* - \frac 1K(\nabla f(x, s) - \nabla F(x))\right)\,\dd \rho_t(x, s) - \\ &\qquad -\frac 1K\int_{\R^d \times S}(\nabla F(x) - \nabla F(x^*))\cdot\left(I_d + \frac GK\right)\left(x - x^* - \frac 1K(\nabla f(x, s) - \nabla F(x))\right)\,\dd \rho_t(x, s) + \\ &\qquad + \frac{C + 1}{2K^2}\sigma^2 - \frac {C + 1}{2K^2}\int_{\R^d \times S} |\nabla f(x, s) - \nabla F(x)|^2 \,\dd\rho_t(x, s) + \frac 1K\int_{\R^d \times S}(x - x^*)\cdot(\nabla f(x, s) - \\ &\qquad -\nabla F(x))\,\dd\rho_t(x,s)  - \frac C{K^3}\int_{\R^d \times S} (\nabla f(x, s) - \nabla F(x))\cdot G(\nabla f(x, s) - \nabla F(x))\,\dd \rho_t(x, s) - \\ &\qquad  - \frac C{K^3}\int_{\R^d \times S} (\nabla F(x) - \nabla F(x^*))\cdot G(\nabla f(x, s) - \nabla F(x))\,\dd \rho_t(x, s) \\
&\leq \left(\frac 1{K^2} - \frac{C + 1}{2K^2}\right)\int_{\R^d \times S} |\nabla f(x, s) - \nabla F(x)|^2 \,\dd\rho_t(x, s) - \frac 1{K^2}\int_{\R^d \times S}(\nabla f(x, s) - \nabla F(x) + \nabla F(x) - \\ &\qquad -\nabla F(x^*))\cdot G\left(x - x^* - \frac 1K(\nabla f(x, s) - \nabla F(x))\right)\,\dd \rho_t(x, s) -\frac mK \int_{\R^d \times S}|x - x^*|^2\,\dd \rho_t(x, s) + \\ &\qquad + \frac 1{K^2}\int_{\R^d \times S}(\nabla F(x) - \nabla F(x^*))\cdot (\nabla f(x, s) - \nabla F(x))\,\dd \rho_t(x, s) + \frac{C + 1}{2K^2}\sigma^2 - \\ &\qquad -  
\frac C{K^3}\int_{\R^d \times S} (\nabla f(x, s) - \nabla F(x))\cdot G(\nabla f(x, s) - \nabla F(x))\,\dd \rho_t(x, s) - \\ &\qquad - 
\frac C{K^3}\int_{\R^d \times S} (\nabla F(x) - \nabla F(x^*))\cdot G(\nabla f(x, s) - \nabla F(x))\,\dd \rho_t(x, s),
\end{align*}
where the convexity of $F$ is harnessed. If we define $C_G \coloneqq \|G\|_\infty$, one can estimate the term above by the Cauchy-Schwarz and Young inequalities:
\begin{align*}
& -\frac mK \int_{\R^d \times S}|x - x^*|^2\,\dd \rho_t(x, s) + \frac{C + 1}{2K^2}\sigma^2 + \frac {1 - C}{2K^2}\int_{\R^d \times S} |\nabla f(x, s) - \nabla F(x)|^2 \,\dd\rho_t(x, s) + \\ &\qquad + \frac {C_G}{K^2}\int_{\R^d \times S}\left(|\nabla f(x, s) - \nabla F(x)| + L|x - x^*|\right)\left(|x - x^*| + \frac 1K|\nabla f(x, s) - \nabla F(x)|\right)\,\dd \rho_t(x, s) + \\ &\qquad + \frac L{K^2}\int_{\R^d \times S}| x - x^*|\cdot |\nabla f(x, s) - \nabla F(x)|\,\dd \rho_t(x, s) + \frac {C}{K^3}\int_{\R^d \times S} |\nabla f(x, s) - \\ &\qquad -\nabla F(x)|^2 \,\dd\rho_t(x, s) + C\frac {LC_G}{K^3}\int_{\R^d \times S} |x - x^*|\cdot |\nabla f(x, s) - \nabla F(x)|\,\dd \rho_t(x, s)  \\
&\leq \left(- \frac mK + \frac {2LC_G + C_G +L}{2K^2} + \frac {LC_G + CLC_G}{2K^3}\right)\int_{\R^d \times S}|x - x^*|^2\,\dd \rho_t(x, s) +  \frac{C + 1}{2K^2}\sigma^2 +  \\ &\qquad +\left(\frac{1 - C}{2K^2} + \frac {C_G + L}{2K^2} + \frac{2C_G + LC_G + 2C + CLC_G}{2K^3}\right)\int_{\R^d \times S} |\nabla f(x, s) - \nabla F(x)|^2 \,\dd\rho_t(x, s).
\end{align*}
We then define $\alpha \coloneqq \min\{m - \frac {2LC_G + C_G +L}{2K} - \frac {LC_G + CLC_G}{2K^2}, \frac{C - 1}{2} - \frac {C_G + L}{2} - \frac{2C_G + LC_G + 2C + CLC_G}{2K}\}$ and $\alpha > 0$ if $K$ and $C$ are large enough. With this definition in mind, we can proceed:
\begin{align*}
&-\frac \alpha K\int_{\R^d \times S}|x - x^*|^2\,\dd \rho_t(x, s) - \frac \alpha {K^2}\int_{\R^d \times S} |\nabla f(x, s) - \nabla F(x)|^2 \,\dd\rho_t(x, s) + \frac{C + 1}{2K^2}\sigma^2  \\
&\leq -\frac {\alpha'}K\int_{\R^d \times S}|x - x^*|^2\,\dd \rho_t(x, s) - \frac{\alpha'}{K^3}\int_{\R^d \times S} |\nabla f(x, s) - \nabla F(x)|^2 \,\dd\rho_t(x, s) - \\ &\qquad - \frac {\alpha'}{K^3}\int_{\R^d \times S} |\nabla f(x, s) - \nabla F(x)|^2 \,\dd\rho_t(x, s) + \frac{C + 1}{2K^2}\sigma^2 \end{align*}
where $\alpha' \coloneqq \alpha/2$, since $1/K^3 \leq 1/K^2$ if $K$ is large enough. One can apply again the Young inequality:
\begin{align*}
&-\frac {\alpha'}{2K}\int_{\R^d \times S}|x - x^*|^2\,\dd \rho_t(x, s) - \frac{\alpha'}{2K^3}\int_{\R^d \times S} |\nabla f(x, s) - \nabla F(x)|^2 \,\dd\rho_t(x, s) + \frac{C + 1}{2K^2}\sigma^2 \\ &\qquad +\frac{\alpha'}{2K}\int_{\R^d \times S} 2(x - x^*)\cdot(\nabla f(x, s) - \nabla F(x)) \,\dd\rho_t(x, s) - \frac {\alpha'} C \frac C{K^3}\int_{\R^d \times S} |\nabla f(x, s) - \nabla F(x)|^2 \,\dd\rho_t(x, s) \\
&\leq -\frac{\alpha''}{2K}\int_{\R^d \times S}\left[\left|x - x^* - \frac 1K(\nabla f(x, s) - \nabla F(x))\right|^2 + \frac C{K^2}|\nabla f(x, s) - \nabla F(x)|^2\right]\,\dd \rho_t(x, s) + \frac{C + 1}{2K^2}\sigma^2
\end{align*}
\endgroup
where we applied again the Young inequality and defined $\alpha'' \coloneqq \min\{\alpha', \frac{2\alpha'}C\}$. Thanks to the Grönwall lemma applied to first and last terms of the previous long calculation, we proved the following inequality for $K$ large enough and a certain constant $\beta > 0$:
\begin{equation*}\begin{aligned}  &\E \left[\left  |X_t - x^* - \frac 1K\left(\nabla f(X_t, S_t) - \nabla F(X_t)\right)\right|^2\right] \leq \\
&\E \left[\left  |X_t - x^* - \frac 1K\left(\nabla f(X_t, S_t) - \nabla F(X_t)\right)\right|^2 + \frac C{K^2}|\nabla f(X_t, S_t) - \nabla F(X_t)|^2\right] \leq \\ &\E \left[\left  |X_0 - x^* - \frac 1K\left(\nabla f(X_0, S_0) - \nabla F(X_0)\right)\right|^2 + \frac C{K^2}|\nabla f(X_0, S_0) - \nabla F(X_0)|^2\right]e^{-\frac \beta K t}  + \\ & + \frac{1 - e^{-\frac \beta K t}}\beta\frac{C + 1}{K^2} \sigma^2. 
\end{aligned}\end{equation*}
\end{proof}

Estimate \eqref{mean_dis}, when $K$ is large, can give an idea about the concentration of $X_t$ around $x^*$, since the term with the gradients is bounded, due to the Lipschitz continuity of $\nabla f$. Moreover, it highlights the role of the variance term $\sigma^2$. 

\subsection{Variable learning rate} \label{sec_var}

In the following we investigate the case of $K(t) \rightarrow \infty$ as $t \rightarrow \infty$, where we expect the convergence to a large-time limit concentrated at minimizers of $F$. Note that  $\frac 1K$ can be interpreted as the learning rate in a stochastic gradient descent iteration on $F$, since when $K \to \infty$, we have convergence towards the gradient flow.

We make the following assumptions in this section:
\begin{itemize}
    \item $F$ is a $m$--strongly convex function with unique minimizer $x^*$; 
    \item $K = K(t)$ is a continuously differentiable function, with $K > 0$ and $K' \geq \gamma > 0$;
    \item $\int_0^{+\infty} \frac 1{K(u)}\,\dd u = +\infty$ and $\int_0^{+\infty} \frac 1{K^2(u)}\,\dd u < +\infty$. 
\end{itemize}

Note that the conditions on $K$ can be translated again to those usually required for the learning rate of the stochastic gradient descent algorithm in the convergence analysis of the latter (cf. \cite{Turinici} for a simple exposition).

Since $K$ is regular enough, we have again well-posedness of  equation \eqref{fund_eq} with variable $K$ by analogous methods. If the stochastic process $(X_t, S_t)$ is distributed as $\rho_t$ and we rescale time appropriately, we can obtain the same estimate as in \eqref{mean_dis} about the average square distance between $X_t$ and $x^*$, which augment now with a higher order term due to the dependency of $K$ on time:

\begin{lemma}
Let $F$ and $K$ satisfy the above assumptions and let 
$$\varepsilon(t) \coloneqq \E \left[\left  |X_t - x^* - \frac 1{K(t)}\left(\nabla f(X_t, S_t) - \nabla F(X_t)\right)\right|^2\right] + \frac C{K(t)^2}\E\left[|\nabla f(X_t, S_t) - \nabla F(X_t)|^2\right]. $$
Then $\varepsilon(t) \rightarrow 0$ as $t \rightarrow \infty$.
\end{lemma}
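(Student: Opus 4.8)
The plan is to convert the definition of $\varepsilon(t)$ into a closed differential inequality and then integrate it. Write $\varepsilon(t)=\int_{\R^d\times S}\Phi_t(x,s)\,\dd\rho_t(x,s)$ with
\[ \Phi_t(x,s)=\Big|x-x^*-\tfrac1{K(t)}\big(\nabla f(x,s)-\nabla F(x)\big)\Big|^2+\tfrac{C}{K(t)^2}\big|\nabla f(x,s)-\nabla F(x)\big|^2, \]
where $\rho_t$ solves \eqref{fund_eq} with the time--dependent coefficient $K(t)$. Then $\varepsilon'(t)=\int\Phi_t\,\dd(\partial_t\rho_t)+\int\partial_t\Phi_t\,\dd\rho_t$. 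The first term I would estimate exactly as in the proof of Theorem \ref{long_ineq}: at each fixed time the equation \eqref{fund_eq} has the same form with the constant replaced by the number $K(t)$, so the same chain of integrations by parts and Young inequalities applies and yields, as soon as $K(t)\geq K_0$ (i.e. for $t\geq t_0$, since $K$ is increasing to $+\infty$),
\[ \int\Phi_t\,\dd(\partial_t\rho_t)\ \leq\ -\frac{\beta}{K(t)}\,\varepsilon(t)+\frac{c_1\sigma^2}{K(t)^2} \]
for constants $\beta,c_1>0$ and $C$ fixed large as in Theorem \ref{long_ineq}.

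The genuinely new contribution is $\int\partial_t\Phi_t\,\dd\rho_t$, coming from the explicit dependence of $\Phi_t$ on $K(t)$. Since $\frac{\dd}{\dd t}\tfrac1{K}=-\tfrac{K'}{K^2}$ and $\frac{\dd}{\dd t}\tfrac{C}{K^2}=-\tfrac{2CK'}{K^3}$, a short computation gives $\partial_t\Phi_t=\tfrac{2K'}{K^2}(x-x^*)\cdot(\nabla f-\nabla F)-\tfrac{2(C+1)K'}{K^3}|\nabla f-\nabla F|^2$, so after dropping the negative term the only part that remains is controlled once we know that $|\nabla f(x,s)-\nabla F(x)|\leq M:=L\,\mathrm{diam}(S)$ (established in the paragraph preceding subsection \ref{graz_coll}) and that $t\mapsto\int|x-x^*|^2\,\dd\rho_t$ stays bounded in $t$. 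This a priori bound is easy: testing \eqref{fund_eq} with $|x-x^*|^2$ kills the switching term, because the marginal of $\rho_t$ on $\R^d$ is $\bar\rho_t$ and hence $\int|x-x^*|^2\,\dd(\bar\rho_t\otimes\mu)=\int|x-x^*|^2\,\dd\rho_t$, so that $\frac{\dd}{\dd t}\int|x-x^*|^2\,\dd\rho_t\leq -2m\int|x-x^*|^2\,\dd\rho_t+2M\big(\int|x-x^*|^2\,\dd\rho_t\big)^{1/2}$ by $m$--strong convexity of $F$, which keeps that quantity below some $B>0$. Cauchy--Schwarz then gives $\int\partial_t\Phi_t\,\dd\rho_t\leq 2M\sqrt{B}\,\tfrac{K'(t)}{K(t)^2}$, so that, for $t\geq t_0$,
\[ \varepsilon'(t)\ \leq\ -\frac{\beta}{K(t)}\,\varepsilon(t)+h(t),\qquad h(t):=\frac{c_1\sigma^2}{K(t)^2}+\frac{2M\sqrt{B}\,K'(t)}{K(t)^2}, \]
and $\int_{t_0}^\infty h<\infty$: the first summand by the assumption $\int^\infty K^{-2}<\infty$, the second because it telescopes, $\int_{t_0}^\infty \tfrac{K'}{K^2}=\tfrac1{K(t_0)}<\infty$, which uses $K(t)\to\infty$ — and that is where the hypothesis $K'\geq\gamma>0$ enters.

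The conclusion is then a Grönwall argument. Put $\Lambda(t):=\int_{t_0}^t\frac{\beta}{K(u)}\,\dd u$; then $\frac{\dd}{\dd t}\big(e^{\Lambda(t)}\varepsilon(t)\big)\leq e^{\Lambda(t)}h(t)$, hence
\[ \varepsilon(t)\ \leq\ e^{-\Lambda(t)}\varepsilon(t_0)+\int_{t_0}^t e^{-(\Lambda(t)-\Lambda(s))}h(s)\,\dd s . \]
Here $\varepsilon(t_0)<\infty$ because $\rho_{t_0}$ has finite second moment (Theorem \ref{ex_thm}) and $\nabla f-\nabla F$ is bounded, and $\Lambda(t)\to+\infty$ because $\int^\infty K^{-1}=+\infty$, so the first term vanishes. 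For the second, given $\eta>0$ pick $T\geq t_0$ with $\int_T^\infty h<\eta$; then $\int_{t_0}^T e^{-(\Lambda(t)-\Lambda(s))}h(s)\,\dd s\leq e^{-(\Lambda(t)-\Lambda(T))}\int_{t_0}^T h\to 0$ as $t\to\infty$, while $\int_T^t e^{-(\Lambda(t)-\Lambda(s))}h(s)\,\dd s\leq\int_T^\infty h<\eta$, so $\limsup_{t\to\infty}\varepsilon(t)\leq\eta$; letting $\eta\downarrow 0$ gives $\varepsilon(t)\to 0$. I expect the main obstacle to be the first step — re-running the long computation of Theorem \ref{long_ineq} with $K(t)$ in place of the constant and bookkeeping the extra $K'$--terms produced by $\partial_t\Phi_t$ — the point being that, thanks to the uniform $L^2$--bound on the position marginal and the boundedness of $\nabla f-\nabla F$, all of them end up as an integrable forcing $h(t)$ rather than competing with the dissipation $-\tfrac\beta K\,\varepsilon$.
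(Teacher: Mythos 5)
Your argument is correct and follows the same skeleton as the paper's own proof: differentiate $\varepsilon$, re-run the computation of Theorem \ref{long_ineq} pointwise in time (legitimate, since that chain of estimates only uses a lower bound $K(t)\geq K_0$, attained for $t\geq t_0$ because $K'\geq\gamma>0$), collect the extra terms produced by $\frac{\dd}{\dd t}\frac{1}{K(t)}$, and integrate the resulting differential inequality. You diverge from the paper in two places, both to your advantage. First, the paper absorbs the $K'$-terms into the dissipation $-\frac{\alpha}{K}\varepsilon$ by Young's inequality ``for $t$ large enough,'' which implicitly requires a pointwise bound on $K'/K^2$ that the stated hypotheses on $K$ do not supply; you instead bound these terms by the integrable forcing $2M\sqrt{B}\,K'/K^2$, using the uniform bound $|\nabla f-\nabla F|\leq L\,\mathrm{diam}(S)$ together with a separate a priori bound on $\int|x-x^*|^2\,\dd\rho_t$ (your derivation of that bound --- the switching term vanishes against an $s$-independent test function and strong convexity of $F$ gives $y'\leq -2my+2M\sqrt{y}$ --- is correct), and you only need $\int K'/K^2=1/K(t_0)<\infty$, which is automatic. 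Second, the paper concludes by integrating to get $\int^\infty\varepsilon/K<\infty$ and invoking $\int^\infty 1/K=\infty$, which on its own only yields $\liminf\varepsilon=0$; your integrating-factor argument with the split $\int_{t_0}^{T}+\int_{T}^{t}$ genuinely delivers $\varepsilon(t)\to 0$ and is the cleaner way to finish. The only point worth making explicit is that pairing the weak formulation with the quadratically growing test function $\Phi_t$ (to justify $\varepsilon'=\int\Phi_t\,\dd(\partial_t\rho_t)+\int\partial_t\Phi_t\,\dd\rho_t$) relies on the uniform second-moment control from Theorem \ref{ex_thm}, exactly as in Theorem \ref{long_ineq}.
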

\begin{proof}We can proceed as in theorem \ref{long_ineq}, obtaining more terms due to the dependency of $K$ on time:
\begin{equation}\begin{aligned} \label{aver_var}
\frac 12\frac {\dd \varepsilon}{\dd t} &\leq -\frac \alpha {K(t)} \varepsilon + \frac{C + 1}{2K^2(t)}\sigma^2 + \frac{K'(t)}{K^3(t)}\int_{\R^d \times S}\left(x - x^* + \frac 1K(\nabla f(x, s) - \nabla F(x)\right)\cdot \\ 
&\cdot (\nabla f(x, s) - \nabla F(x))\, \dd \rho_t(x, s) - \frac{CK'(t)}{K^4(t)}\int_{\R^d \times S} |\nabla f(x, s) - \nabla F(x)|^2 \,\dd\rho_t(x, s)  \\ &\leq -\frac {\alpha'}{K(t)}\varepsilon + \frac{C + 1}{2K^2(t)}\sigma^2,
\end{aligned}\end{equation}
where the latter holds if $t$ is large enough, thanks to Young's inequality. If we integrate \eqref{aver_var} from $T$ large enough to $t$, we obtain:
$$\varepsilon(t) + 2\alpha'\int_T^t \frac{\varepsilon (u)}{K(u)}\,\dd u \leq \varepsilon(T) + \sigma^2(C + 1)\int_T^t \frac 1{K^2(u)}\,\dd u$$
and due to the second condition on $K$, we have that $\int_0^{+\infty} \frac{\varepsilon (u)}{K(u)}\,\dd u < +\infty$ and, since $\int_0^{+\infty} \frac 1{K(u)}\,\dd u = +\infty$, we gain that $\varepsilon(t) \to 0$ if $t \to \infty$. 
\end{proof}
\begin{figure}
    \centering
    \begin{subfigure}[b]{0.49\textwidth}
        \centering
        \includegraphics[width=\textwidth]{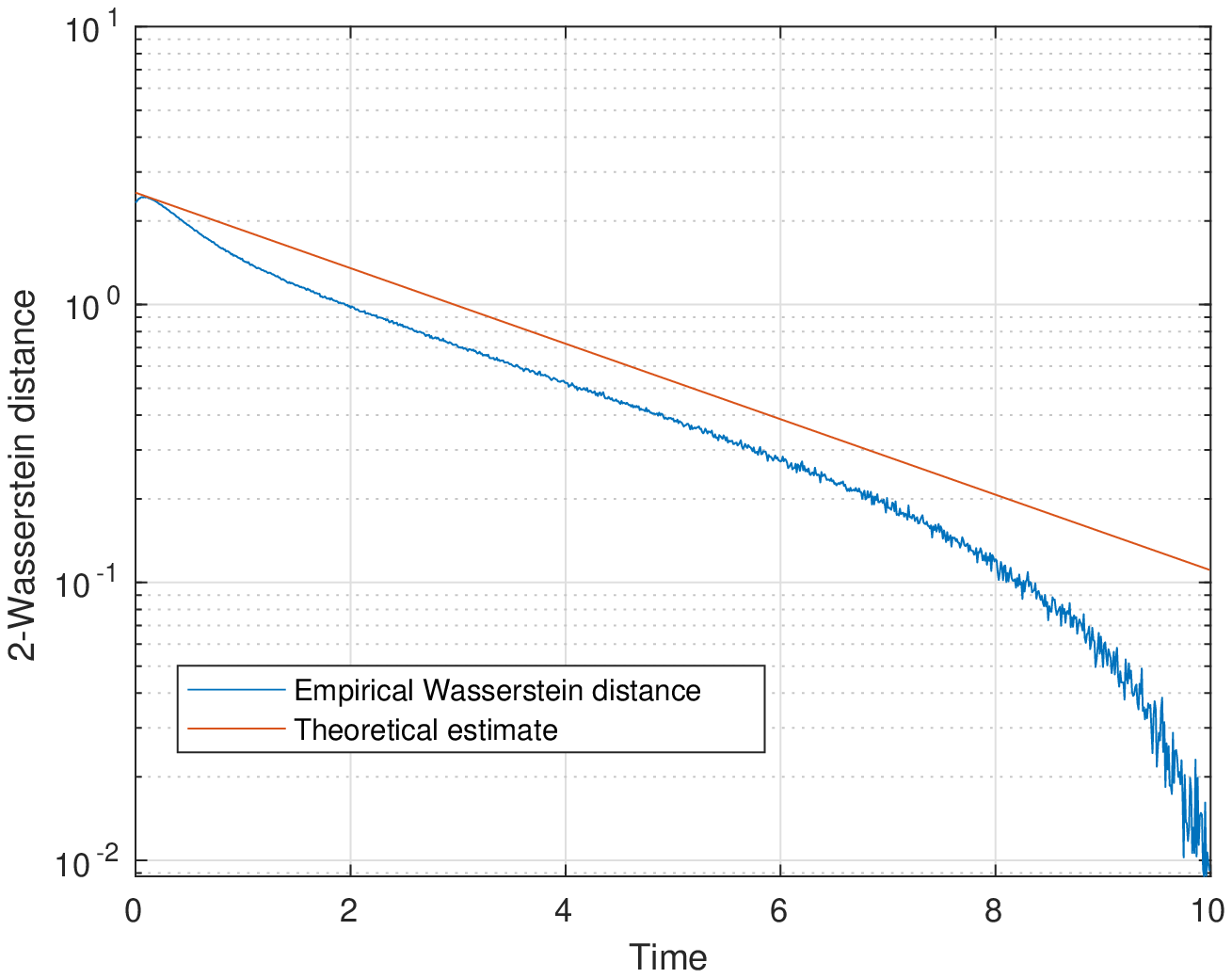}
    \end{subfigure}
    \begin{subfigure}[b]{0.49\textwidth}
        \centering
        \includegraphics[width=\textwidth]{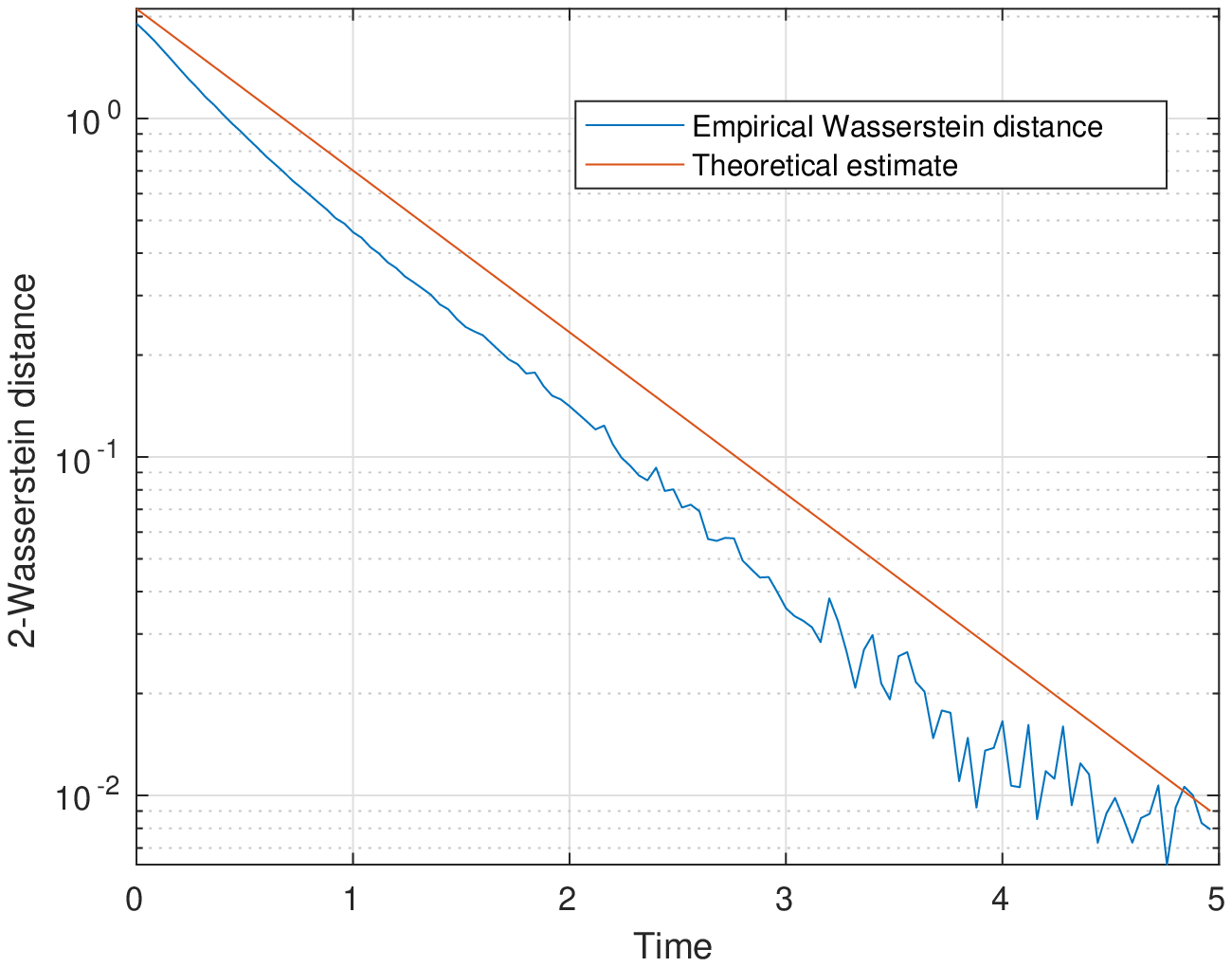}
    \end{subfigure}
    \caption{Convergence in semilogarithmic scale given by $d = 1$ with quadratic potentials $f(x, s) = \frac s2|x - s|^2$ and $f(x, s) = sx^2$, $K(t) = 1 + t$, $S = \{1, 2\}$ and $\mu = Bern(0.5)$ towards $\delta_{5/3}$ and $\delta_0$ compared with the theoretical estimate of subsection \ref{sec_var}.}
\end{figure}
This justifies the following estimate about the convergence of the solution towards the minimizer $x^*$ and this convergence can be estimated through the coupling equation:
\begin{equation} \begin{aligned}\label{coup_eq3}
     \frac{\partial \Pi_t}{\partial t}  =& \, \nabla_x \cdot(\Pi_t \nabla_x f(x, s_1)) +  \nabla_y \cdot(\Pi_t \nabla_y F(y)) 
     + \\ & K(t)~ \int_S \int_S \left(\Gamma(s_1, s_2)\delta_{x^*}(y)\left(\int_{\R^d}\Pi_t(x, y', s_1', s_2')\,\dd y'\right) - \Pi_t(x, y, s_1, s_2)\right)\dd s_1' \dd s_2',
 \end{aligned} \end{equation}
 whose marginals are \eqref{fund_eq}, whose solution is $\rho$, and 
 \begin{equation}\label{marg2} \partial_t \eta = \nabla_y \cdot(\eta \nabla_yF(y)) + K(\delta_{x^*}\otimes \mu - \eta). \end{equation} 
 A stationary solution of \eqref{marg2} is $\delta_{x^*}\otimes \mu$ and we choose as initial datum $\Pi_0$ a coupling of $\rho_0$, with marginal $\mu$ on $S$, and $\delta_{x^*}\otimes \mu$. Proceeding as usual, we get the estimates for the spatial part of the Wasserstein metric:
\begin{align*}\frac \dd{\dd t}&\int_{\R^{2d} \times S^2}|x - y|^2\,\dd \Pi_t(x, y, s_1, s_2) =
2\int_{\R^{2d} \times S^2}(x - y)\cdot(\nabla_y F(y) - \nabla_x f(x, s_1))\,\dd \Pi_t(x,y,s_1, s_2) + \\ &+ K(t)\int_{\R^{2d} \times S^2}|x - x^*|^2\,\dd\Pi_t(x, y, s_1, s_2) - K(t)\int_{\R^{2d} \times S^2}|x - y|^2\,\dd\Pi_t(x, y, s_1, s_2) = \\ &= 2\int_{\R^{2d} \times S^2}(x - y)\cdot(\nabla_y F(y) - \nabla_x f(x, s_1))\,\dd \Pi_t(x,y,s_1, s_2),
\end{align*}
since $y = x^*$ a.s., considering the marginals of $\Pi_t$.
For the random part:
$$\frac \dd{\dd t}\int_{\R^{2d} \times S^2}|s_1 - s_2|^2\,\dd \Pi_t(x, y, s_1, s_2) = -K(t)\int_{\R^{2d} \times S^2}|s_1 - s_2|^2\,\dd \Pi_t(x, y, s_1, s_2), $$
that is $\int_{\R^{2d} \times S^2}|s_1 - s_2|^2\,\dd \Pi_t(x, y, s_1, s_2) = \left(\int_{\R^{2d} \times S^2}|s_1 - s_2|^2\,\dd \Pi_0(x, y, s_1, s_2)\right)e^{-\int_0^t K(u)\,\dd u}$.
Using again the Young inequality and the properties of $\nabla f$ on the spatial part, we get:
\begin{align*}\frac \dd{\dd t}\int_{\R^{2d} \times S^2}|x - y|^2\,\dd \Pi_t(x, y, s_1, s_2) \leq -\frac m2 &\int_{\R^{2d} \times S^2}|x - y|^2\,\dd \Pi_t(x, y, s_1, s_2) + \\ &+\frac {cL}{2m} \int_{\R^{2d} \times S^2}|s_1 - s_2|^2\,\dd \Pi_t(x, y, s_1, s_2). \end{align*}
We can apply the Grönwall lemma to the previous inequality, gaining for a constant $c > 0$:
\begin{equation}\begin{aligned} \int_{\R^{2d} \times S^2}&|x - y|^2\,\dd \Pi_t(x, y, s_1, s_2) \leq \\ \leq &\left(\int_{\R^{2d} \times S^2}|x - y|^2\,\dd \Pi_0(x, y, s_1, s_2)\right)e^{-\frac m2 t} + ce^{-\frac m2 t}\int_0^t e^{\frac m2 s - \int_0^s K(u)\,\dd u}\,\dd s \end{aligned} \end{equation}
and for adequate choices of $K$, when $t \to \infty$, we have that $W_2(\rho_t, \delta_{x^*} \otimes \mu) \to 0$ exponentially. 
\section{Conclusion and further perspectives} \label{furt_per}
In this paper, we introduced a continuous time approximation that keeps the compact support of stationary solutions, we proved existence, uniqueness and gave examples of stationary states with compact support. Moreover, we proved stability with respect to the switching probability $\mu$, which is important when $\mu$ is replaced by empirical measures. We provided convergence analysis in Wasserstein spaces, which holds for $K$ constant and $K \to \infty$ as well as for variable learning rates. In this section, we also insert some open issues and further ideas about the model presented in the article.
\subsection{Hilbert expansion and stochastic modified equations}
In section \ref{conv_an} we investigated the first order convergence of equation \eqref{fund_eq_meas} when $K \to \infty$, but the higher order expansion remains unexplored. We can give some intuitive insights about it, exploiting again the Hilbert expansion \eqref{hilb_ex}. We already know that $\rho_0 = \bar \rho_0 \otimes \mu$ and the equation of higher order is:
\begin{equation} \label{first_or}\frac {\partial \rho_0}{\partial t} = \nabla \cdot(\rho_0 \nabla f) + \bar \rho_1 \otimes \mu - \rho_1. \end{equation}
This equation can be integrated on $S$, taking into account the structure of $\rho_0$:
$$\frac {\partial \bar \rho_0}{\partial t} = \nabla \cdot \left(\bar \rho_0 \nabla \int_S f\,\dd\mu(s)\right)= \nabla \cdot(\bar \rho_0 \nabla F)$$
that is $\partial_t \rho_0 = \nabla \cdot (\rho_0 \nabla F)$, which can be substituted in \eqref{first_or}, getting:
$$\rho_1 = \bar \rho_1 \otimes \mu + \nabla \cdot(\rho_0(\nabla f - \nabla F))$$
which can be in turn integrated against $\nabla f$:
\begin{equation}\label{eq_ut}\int_S \rho_1 \nabla f\,\dd\mu = \bar \rho_1\nabla F + \int_S \nabla \cdot(\rho_0 (\nabla f - \nabla F))\nabla f\,\dd \mu. \end{equation}
The last term can be analysed better:
\begin{align*} \int_S &\nabla \cdot(\rho_0 (\nabla f - \nabla F))\nabla f\,\dd \mu = \int_S \nabla \cdot(\rho_0 (\nabla f - \nabla F))(\nabla f - \nabla F)\,\dd \mu = \\ &= \int_S \nabla \cdot (\rho_0(\nabla f - \nabla F)\otimes(\nabla f - \nabla F))\,\dd \mu - \int_S \rho_0(\nabla^2 f - \nabla^2 F)(\nabla f - \nabla F)\,\dd \mu = \\ &= \nabla \cdot \left(\rho_0 \int_S (\nabla f - \nabla F)\otimes(\nabla f - \nabla F)\,\dd \mu \right) - \frac 12 \rho_0 \nabla \left(\int_S |\nabla f - \nabla F|^2\,\dd \mu\right).\end{align*}
The second order equation is $\partial_t \rho_1 = \nabla \cdot(\rho_1 \nabla f) + \overline \rho_2 \otimes \mu - \rho_2$, which, if integrated, yields:
\begin{equation}\label{eq_ut2}\frac {\partial \bar \rho_1}{\partial t} = \nabla \cdot\left(\int_S \rho_1\nabla f\,\dd \mu\right). \end{equation}
If $\Sigma \coloneqq \E_s[(\nabla f - \nabla F)\otimes(\nabla f - \nabla F)]$ and $V \coloneqq \E_s[|\nabla f - \nabla F|^2]$, then, considering \eqref{eq_ut} and \eqref{eq_ut2}, $\bar \rho$ solves:
\begin{align*}\partial_t \bar \rho &= \partial_t \left(\bar \rho_0 + \frac 1K \bar \rho_1\right) + O\left(\frac 1{K^2}\right) = \\ &= \nabla \cdot \left(\left(\bar \rho_0 + \frac 1K\bar \rho_1\right)\nabla F\right) + \frac 1K \nabla \cdot (\nabla \cdot (\bar \rho_0\Sigma)) - \frac 1{2K}\nabla \cdot (\bar \rho_0\nabla V) + O\left(\frac 1{K^2}\right)
\end{align*}
If, in particular, $\nabla F \equiv 0$ and $K \to \infty$, one obtains, rescaling time:
\begin{equation}\partial_t \bar \rho = \nabla \cdot \nabla \cdot(\bar \rho \Sigma) - \frac 12\nabla \cdot(\bar \rho \nabla V). \label{sec_ord_eq} \end{equation}
The condition $\nabla F \equiv 0$ is fulfilled if $f(x, s) = s \cdot x$, with $S = \S^{d-1}$ and $\mu = \frac 1{\CMcal H^{d - 1}(\S^{d - 1})}\CMcal{H}^{d -1}\mres \,\S^{d -1}$: in this case $\nabla f(x, s) = s$ and we gain that $\partial_t \bar \rho = \Delta \bar \rho$, i.e. $\bar \rho$ satisfies the heat equation. With this particular choice of the potential, the problem of convergence towards equation \eqref{sec_ord_eq}, which we do not tackle in this paper, is simply reduced to the convergence of the linear Boltzmann equation towards the heat equation.

If $F \neq 0$ we obtain an equation in the original time with a higher order term, which is reminiscent of the stochastic modified equations (cf. \cite{FGL19,LTW19}) 
\begin{equation}\partial_t \bar \rho = \nabla \cdot ( \bar \rho \nabla f) + \frac{1}K \nabla \cdot \nabla \cdot(\bar \rho \Sigma) - \frac 1{2K}\nabla \cdot(\bar \rho \nabla V).  \end{equation}

\subsection{Diffusion}

A natural modification would be to include entropic parts in the energies, possibly with a coefficient $D$ depending on $s$. This yields an equation with additional diffusion of the form
\begin{equation} \frac{\partial \rho}{\partial t} = \nabla \cdot(\rho \nabla f(x, s)) + D(s) \Delta\rho + K  (\bar{\rho}\otimes \mu - \rho ),\end{equation}
which can indeed be analyzed in the same way of \eqref{fund_eq_meas}. The main change appears in the existence proof, which needs to be changed to using techniques for parabolic systems. As long as $D$ is uniformly positive in $s$, this is a rather straight-forward task. The existence proof for stationary solutions is hardly changing, since we use a diffusive approximation anyway. The coupling estimates can be handled in the same way as in \cite{FP19}. With the couplings for the diffusive part constructed there it is even possible to obtain estimates for equations of the form
\begin{equation} \frac{\partial \rho}{\partial t} = \nabla \cdot(\rho \nabla f(x, s)) + \nabla \cdot ( D(x,s) \nabla \rho) + K  (\bar{\rho}\otimes \mu - \rho ),\end{equation}
with $D$ being a positive definite diffusion matrix. The estimates will however have additional exponentially growing terms in time if $D$ is not independent of $x$, or otherwise one would need to find more appropriate transport costs (cf. \cite{FP19}).

\subsection{Mean--field equations and neural networks} 
We would like to introduce in this subsection a possible nonlinear generalization to equation \eqref{fund_eq_meas} as mean--field limit of a hierarchy of equations given by the loss with respect to a neural network. \\ For these reasons, we consider a shallow neural network $\Phi_x(s_1)  = \frac 1N\sum_{i = 1}^Nx_i^3\sigma(x_i^1 \cdot s_1 + x_i^2)$ with data space $S \subseteq \R^{d + 1}$, $s = (s_1, s_2) \in S$, $x = (x_1, \dots, x_N) \in \R^{N(d + 2)}$ parameters of the neural network and activation function $\sigma$. We fix the quadratic loss $f(x, s) \coloneqq \frac N2|\Phi_x(s_1) - s_2|^2$, of which we can work out the gradient:
\begin{equation*}\begin{aligned} \nabla_{x_i}f(x, s) &= N(\Phi_x(s_1) - s_2)\nabla_{x_i}\Phi_x(s_1) = \\ &= \left(\frac 1N\sum_{j = 1}^N x_j^3\sigma(x_j^1\cdot s_1 + x_j^2)\right)\nabla_{x_i}\left(x_i^3\sigma(x_i^1 \cdot s_1 + x_i^2)\right) - s_2\nabla_{x_i}\left(x_i^3\sigma(x_i^1 \cdot s_1 + x_i^2)\right) = \\ &= \nabla_{x_i}\left(-s_2x_i^3\sigma(x_i^1 \cdot s_1 + x_i^2)\right) + \nabla_{x_i}\left(\frac 1N\sum_{j = 1}^Nx_i^3x_j^3\sigma(x_i^1 \cdot s_1 + x_i^2)\sigma(x_j^1 \cdot s_1 + x_j^2)\right) \eqqcolon \\ &\eqqcolon \nabla_{x_i}V_s(x_i) + \nabla_{x_i}\left(\frac 1N\sum_{j=1}^NW_s(x_i, x_j)\right)
\end{aligned}\end{equation*}
where $V_s(x_i) \coloneqq -s_2x_i^3\sigma(x_i^1 \cdot s_1 + x_i^2)$ and $W_s(x_i, x_j) = W_s(x_j, x_i) \coloneqq x_i^3x_j^3\sigma(x_i^1 \cdot s_1 + x_i^2)\sigma(x_j^1 \cdot s_1 + x_j^2)$.
With this choice of $f$, equation \eqref{fund_eq} reads:
\begin{equation*}\frac{\partial \rho^N}{\partial t} = \sum_{i = 1}^N \nabla_{x^i}\cdot \left(\rho^N\nabla_{x^i}V_s(x^i) + \frac 1N\sum_{j = 1}^N\nabla_{x^i}W_s(x^i, x^j)\rho^N\right) + K\int_S(\rho^N(x, s') - \rho^N(x, s))\,\dd\mu(s'). \end{equation*}
We define also the hierarchy of marginals $\rho^{N : k} \coloneqq \int \rho^N(\dd x_{k + 1}, \dots, \dd x_N)$, which satisfy the equations:
\begin{equation*}\begin{aligned} \frac{\partial \rho^{N:k}}{\partial t} &= \sum_{i = 1}^k\nabla_{x_i}\cdot \Bigg(\rho^{N:k}\nabla V_s(x_i) + \frac 1N \sum_{j = 1}^k\nabla W_s(x_i, x_j)\rho^{N:k} + \\ &+ \frac 1N\sum_{j = k + 1}^N \int_{\left(\R^{d + 2}\right)^{N - k}}\nabla W_s(x_i, x_j)\rho^N(\dd x_{k + 1}\dots\dd x_N)\Bigg) + K\int_S (\rho^{N:k}(s') - \rho^{N:k}(s))\,\dd\mu(s'),
\end{aligned}\end{equation*}
for $k = 1, \dots, N$. If we suppose that $\rho_0$ is exchangeable in $x$, the previous system is exchangeable in $x$ as well and it can interpreted in an easier way:
\begin{equation}\begin{aligned} \frac{\partial \rho^{N:k}}{\partial t} &= \sum_{i = 1}^k\nabla_{x_i}\cdot \Bigg(\rho^{N:k}\nabla V_s(x_i) + \frac 1N\sum_{j = 1}^k \nabla W_s(x_i, x_j)\rho^{N:k} + \\ &+ \frac {N-k}N\int_{\R^{d + 2}}\nabla W_s(x_i, x_{k + 1})\rho^{N : k + 1}(\dd x_{k + 1})\Bigg) + K\int_S (\rho^{N:k}(s') - \rho^{N:k}(s))\,\dd\mu(s'), \label{bbgky}
\end{aligned}\end{equation}
for $k = 1, \dots, N$. 
In particular, for a single parameter $x$, equation \eqref{bbgky} reads:
\begin{equation}\begin{aligned} \frac{\partial \rho^1}{\partial t} &= \nabla_{x_1}\cdot \Bigg(\rho^1\nabla V_s(x_1) + \frac 1N\nabla W_s(x_1, x_1)\rho^1 + \\ &+ \frac {N-1}N\int_{\R^{d + 2}}\nabla W_s(x_1, x_2)\rho^{N : 2}(\dd x_2)\Bigg) + K\int_S (\rho^1(s') - \rho^1(s))\,\dd\mu(s'). \label{bbgky_1}
\end{aligned}\end{equation}
If $N \to \infty$, since $\rho^{N:2}$ has two identical marginals on $\R^{d + 2}$, coinciding with $\bar \rho^1$, we can guess that equation \eqref{bbgky_1} converges to the Vlasov equation: 
\begin{equation} \partial_t \rho = \nabla_x \cdot \left(\rho \left(\nabla V_s(x) + \int_{\R^{d + 2}}\nabla W_s(x, y)\,\bar \rho(d y)\right)\right) + K\int_S(\rho(s') - \rho(s))\,\dd\mu(s'). \end{equation}

\section*{Acknowledgments}

The authors gratefully acknowledge the support by the RTG 2339
“Interfaces, Complex Structures and Singular Limits” of the German Research Foundation (DFG).


\bigskip
$^1$\,E-mail: \href{martin.burger@fau.de}{\nolinkurl{martin.burger@fau.de}}\\
$^2$\,E-mail: \href{alex.rossi@fau.de}{\nolinkurl{alex.rossi@fau.de}}

\end{document}